\numberwithin{equation}{section}
\newtheorem{theorem}{Theorem}[section]
\newtheorem{definition}[theorem]{Definition}
\newtheorem{proposition}[theorem]{Proposition}
\newtheorem{corollary}[theorem]{Corollary}
\newtheorem{lemma}[theorem]{Lemma}
\newtheorem{remark}[theorem]{Remark}
\newtheorem{example}[theorem]{Example}
\newenvironment{Example}{\begin{example}\rm}{\end{example}}
\newenvironment{Remark}{\begin{remark}\rm}{\end{remark}}
\newcommand{\R}{\mathbb{R}}
\newcommand{\M}{\mathbb{M}}
\newcommand{\Ho}{\mathcal{H}}
\newcommand{\V}{\mathcal V}
\newcommand{\ve}{\varepsilon}
\newcommand{\B}{\mathbb{B}}
\newcommand{\Sph}{\mathbb{S}}
\newcommand{\AdS}{\mathbf{AdS}}
\newcommand{\U}{\mathbf{U}}
\newcommand{\SO}{\mathbf{SO}}
\newcommand{\fso}{\mathfrak{so}}
\newcommand{\SU}{\mathbf{SU}}
\newcommand{\Sp}{\mathbf{Sp}}
\newcommand{\Spin}{\mathbf{Spin}}
\newcommand{\Comp}{\mathbb{C}}
\newcommand{\Quat}{\mathbb{H}}
\newcommand{\Octo}{\mathbb{O}}
\newcommand{\proj}{\mathrm{P}}
\newcommand{\RP}{\R\proj}
\newcommand{\CP}{\Comp\proj}
\newcommand{\OP}{\Octo\proj}
\newcommand{\hypb}{\mathrm{H}}
\newcommand{\CH}{\Comp\hypb}
\newcommand{\OH}{\Octo\hypb}
\newcommand{\calC}{\mathcal{C}}
\newcommand{\Cl}{\mathbf{Cl}}
\newcommand{\Dh}{{\Delta}_{\Ho}}
\newcommand{\diam}{\mathbf{diam}}
\newcommand{\End}{\mathbf{End}}
\newcommand{\fa}{\mathfrak{a}}
\newcommand{\fA}{\mathfrak{A}}
\newcommand{\Id}{\mathbf{Id}}
\newcommand{\Lie}{\mathcal{L}}
\newcommand{\rank}{\mathrm{rank}}
\newcommand{\ri}{\mathbf{Ric}}
\newcommand{\s}[1]{\Gamma(#1)}
\newcommand{\spn}{\mathrm{span}}
\newcommand{\tr}{\mathrm{Tr}}
\newlength{\bibitemsep}\setlength{\bibitemsep}{.10\baselineskip plus .05\baselineskip minus .05\baselineskip}
\newlength{\bibparskip}\setlength{\bibparskip}{0pt}
\let\oldthebibliography\thebibliography
\renewcommand\thebibliography[1]{
  \oldthebibliography{#1}
  \setlength{\parskip}{\bibitemsep}
  \setlength{\itemsep}{\bibparskip}
}
\newcommand{\Addresses}{{
  \bigskip
  \footnotesize

  F.~Baudoin, \textsc{University of Connecticut, Department of Mathematics, 341 Mansfield Road U1009, 06269 Storrs, CT, USA}\par\nopagebreak
  \textit{E-mail address}, F.~Baudoin: \texttt{fabrice.baudoin@uconn.edu}

  \medskip

  E.~Grong, \textsc{University of Bergen, Department of Mathematics, P.O. Box 7803, 5020 Bergen, Norway}\par\nopagebreak
  \textit{E-mail address}, E.~Grong: \texttt{erlend.grong@uib.no}

 \medskip
 
  L.~Rizzi, \textsc{SISSA, via Bonomea 265, 34136 Trieste, Italy}\par\nopagebreak
  \textit{E-mail address}, L.~Rizzi: \texttt{luca.rizzi@sissa.it}

\medskip

  G.~Vega-Molino, \textsc{University of Bergen, Department of Mathematics, P.O. Box 7803, 5020 Bergen, Norway}\par\nopagebreak
  \textit{E-mail address}, G.~Vega-Molino: \texttt{gianmarco.vega-molino@uib.no}

}}
\title{H-type Foliations}
\author{Fabrice Baudoin\footnote{Supported in part by NSF grant DMS-1660031 and the Simons Foundation grant 586355}, Erlend Grong\footnote{Supported by project 249980/F20 of the Norwegian Research Council}, Luca Rizzi\footnote{Supported by the Grant ANR-15-CE40-0018 of the ANR, by the ANR project ANR-15-IDEX-02, and from the European Research Council (ERC) under the European Union's Horizon 2020 research and innovation programme (grant agreement No. 945655)}, Gianmarco Vega-Molino}
\begin{document}

\maketitle

\begin{abstract}
With a view toward sub-Riemannian geometry, we introduce and study   H-type foliations. These structures are natural generalizations of K-contact geometries which encompass as special cases K-contact manifolds, twistor spaces, 3K-contact manifolds and H-type groups.  Under an horizontal Ricci curvature lower bound on these structures, we prove a sub-Riemannian diameter upper bounds and first eigenvalue estimates for the sub-Laplacian. Then, using  a result by Moroianu-Semmelmann \cite{MS}, we  classify the H-type foliations that carry  a parallel horizontal Clifford structure. Finally, we prove an horizontal Einstein property and compute the horizontal Ricci curvature of these spaces in codimension more than $2$. 
\end{abstract}

\tableofcontents

\section{Introduction}

\subsection{Motivation}

A sub-Riemannian manifold is a smooth  manifold $\M$ equipped with a bracket-generating distribution $\Ho \subset T\M$ and a fiber inner product $g_\Ho$ on $\Ho$. The distribution $\Ho $ is referred to as the horizontal distribution. The bracket-generating condition means  that if we denote by $L(\Ho)$ the Lie algebra of the vector fields generated by the global $C^\infty$ sections of $\Ho$, then $\text{span} \{X(x)\mid X\in L(\Ho)\} = T_x \M$ for every $x\in \M$. Broadly speaking, sub-Riemannian geometry is the study of the intrinsic properties of the triple  $(\M,\Ho,g_\Ho)$. Sub-Riemannian geometry is at the interface of many fields, including: geometric control theory, metric geometry, analysis of subelliptic partial differential equations, stochastic analysis and Riemannian geometry. As such, it has been studied, possibly under different names, from many different viewpoints. To get an overview of this rich and vibrant subject, one may  consult the monographs \cite{ABB}, \cite{Ag-Sa}, \cite{Gromov},  \cite{Montgomery},  or \cite{Rifford}.

The purpose of this paper is to introduce and study a new class of sub-Riemannian manifolds generalizing the H-type groups introduced by Kaplan in \cite{Kaplan}. We call such manifolds H-type sub-Riemannian manifolds. Due to their symmetries, H-type sub-Riemannian manifolds provide an ideal framework to develop a  program reducing the study of global geometric, metric, or analytic properties of the ambient space to the study of local sub-Riemannian curvature type invariants. This geometric analysis program will be further developed in a subsequent work. In the present paper, we study H-type sub-Riemannian manifolds arising from a special type of totally geodesic foliations, which we will refer to as H-type foliations. Roughly speaking, H-type foliations concern a special case of Riemannian manifolds $(\M,g)$ that are foliated transversely to a sub-Riemannian structure. We will write this data as $(\M,\Ho,g)$, where the bracket-generating distribution $\Ho$ has constant rank and crucially its complement $\V = \Ho^\perp$ is integrable and tangent to the foliation. At each point $p \in \M$, one has a representation denoted by $J$ of the Clifford algebra $\Cl(\V_p)$ onto the space of horizontal endomorphisms $\Ho_p \to \Ho_p$. We will call the sub-Riemannian manifold $\M, \Ho, g_\Ho)$ obtained by restricting the metric $g$ to $\Ho$ an H-type sub-Riemannian manifold. In the case of an H-type group, the complement $\V$ is given by the center of the group. In the case of a regular K-contact or 3K-contact  structure, this complement is determined by respectively the orbits of a $\U(1)$ or a $\SO(3)$ isometric action on $\M$. 

The main motivation that led to the construction and study H-type sub-Riemannian manifolds was the desire to provide a unified framework for many results obtained in the last few years in the geometric analysis of sub-Riemannian manifolds through different techniques (see for instance \cite{BRcontact},  \cite{BI17}, \cite{BG17}, \cite{RS-3sas}). The interest of H-type foliations as model spaces in sub-Riemannian geometry is demonstrated in Section \ref{section CD} of the present paper, where we show that on H-type foliations the generalized curvature dimension inequality introduced in \cite{BG17} can be controlled using information from only the horizontal Ricci curvature of the Bott connection.  Some consequences of this fact are pointed out in Corollary \ref{yang-mills corollary}, but we refer to the survey \cite{BaudoinEMS2014} for many other known consequences of the generalized curvature dimension inequality. In the subsequent paper \cite{BGMR19}, we show that the techniques developed in \cite{BGKT17} extend to H-type sub-Riemannian manifolds as well, and as consequence we will obtain for those structures sharp Bonnet-Myers theorem and  sharp sub-Laplacian comparison theorems.

\subsection{Main results}

A first highlight of the paper is Theorem \ref{th:AllYM}, where we prove that H-type foliations are necessarily Yang-Mills. As a consequence,  the sub-Laplacian of an H-type foliation satisfies a simple Bochner's type formula and the validity of the generalized curvature dimension inequality depends only on horizontal Ricci curvature, see Proposition \ref{generalized CD}. Applications of generalized curvature dimension inequalities in sub-Riemannian geometry have extensively been studied in the last few years (see \cite{BBG14, BG17,GrTh16a}) and, in the present setting, some corollaries are pointed out in Corollaries  \ref{yang-mills corollary} and \ref{compact finite fundamental corollary}. In particular, sub-Riemannian diameter upper bounds and first eigenvalue estimates for the sub-Laplacian are obtained.

A second  highlight of the paper is the classification of H-type foliations that carry a parallel horizontal Clifford structure. Roughly speaking, from Theorem \ref{Formula psi},  an H-type foliation carries a parallel horizontal Clifford structure if $\nabla_\Ho J=0$ and for all vertical vectors $u,v \in \V$,
$$
(\nabla_{u} J)_{v} =J_{\Psi(u,v)},
$$
with $\Psi(u,v)=-\kappa (u \cdot v +\langle u ,v \rangle )$, where $J$ is the representation of the Clifford algebra $\Cl(\V)$ on the space of  endomorphisms of  $\Ho $, $\nabla$ the Bott connection of the foliation (see Section \ref{s:setting}),  and $\kappa$ is a constant such that $\kappa^2$ is the sectional curvature of the leaves of the foliation.  

In the influential paper \cite{MS},  A. Moroianu and U. Semmelmann introduced the related  concept of parallel even Clifford structures on Riemannian manifolds. In a sense, for $n \neq 8$,   H-type foliations with a parallel horizontal Clifford structure are  to parallel even Clifford structures on Riemannian manifolds what Sasakian and 3-Sasakian manifolds are respectively to K\"ahler and quaternion K\"ahler manifolds; see Corollary \ref{parallel even} for a precise statement.  We show in Theorem \ref{horizontal einstein} that H-type foliations with a parallel horizontal Clifford structure are always  horizontally Einstein if the rank of $\V$ is greater or equal than 2 and different from 3. More precisely, for $m \ge 2$, $m \neq 3$,  we prove that one has
\begin{equation}
\ri_\Ho= \frac{\kappa}{4} (n + 8(m - 1) ) g_\Ho,
\end{equation}
where $n$ is the rank of $\Ho$, $m$ the rank of $\V$ and $\ri_\Ho$ the horizontal Ricci curvature of the Bott connection. The case $m=3$ is special, due to the Lie algebra splitting $\fso(4)=\fso(3)\oplus\fso(3)$,  and we prove in that case that if the structure is of quaternionic type then 
\begin{equation}
\ri_\Ho= \frac{\kappa}{2} (n + 8 ) g_\Ho.
\end{equation}
 In  Theorem \ref{t:curvature constancy1} we prove  that if  $\kappa \neq 0$, then the vertical distribution $\V$ of a H-type foliation $(\M, \Ho,g)$ lies in the curvature constancy (in the sense of Gray \cite{Gray}) of the metric $$\hat{g} = g_\Ho \oplus 2\kappa g_\V ,$$ where $g_\Ho$ and $g_\V$ respectively denote the projections of the original Riemannian metric $g$ on $\Ho$ and $\V$. Interestingly, we note that if $\kappa >0$, $\hat{g}$ is a Riemannian metric, whereas if $\kappa <0$ then $\hat{g}$ is a semi-Riemannian metric.  From Theorem 3.7 in  \cite{MS} (which describes all the Riemannian submersions with non-trivial curvature constancy) one deduces therefore the complete classification of complete simply connected H-type  foliations with a parallel horizontal Clifford structure coming from a globally defined submersion $\pi :\M \to \B$ and $\kappa \neq 0$. Those submersions are described in the following two tables. In Table \ref{Table 1}, the H-type foliation  is obtained from a totally geodesic Riemannian submersion $\pi : \M \to \B$ whose fibers coincide with the curvature constancy of $\M$. Notations, conventions and terminology are standard, but for further details we refer to \cite{MS} from which this table taken. In particular, for $n=4$ the quaternion-K\"ahler property is understood in the sense that $\B$ is Einstein and anti self-dual (see \cite{Besse}, Chapter 13). In Table \ref{Table 2}, the H-type foliation  is obtained from a totally geodesic semi-Riemannian submersion $\pi : \M \to \B$ whose fibers are in the curvature constancy of $\M$.  We note that Table \ref{Table 1} contains the quaternionic and octonionic Hopf fibrations  and that Table \ref{Table 2} contains the quaternionic and octonionic anti-de Sitter fibrations.  We  also note that all the examples of $\M$ in Table \ref{Table 1}  are compact.

 \begin{table}[H]
\centering
\scalebox{0.8}{
\begin{tabular}{|p{3.5cm}||p{5cm}|c|>{\centering\arraybackslash}p{1.8cm}|c|  }
  \hline
 $\M$ &   $\B $  & Fiber  & $\rank (\Ho)$ & $\rank (\V)$ \\
 \hline
 \hline
Twistor space & Quaternion-K\"ahler with positive scalar curvature& $\Sph^2$ & $4k$ & $2$ \\
3-Sasakian &Quaternion-K\"ahler with positive scalar curvature & $\Sph^3$ & $4k$ & $3$ \\
Quaternion-Sasakian  & Product of two quaternion-K\"ahler with positive scalar curvature & $\RP^3 $ & $4k$ & $3$ \\
$\frac{\Sp(q^++1)\times \Sp(q^-+1)}{\Sp(q^+) \times \Sp(q^-)\times \Sp(1) }$ & $\Quat P^{q^+} \times \Quat P^{q^-}$ & $\Sph^3$ & $4(q^++q^-)$ & $3$ \\
  \hline
 \hline 
$\frac{\Sp(k+2)}{\Sp(k) \times \Spin(4) }$ & $\frac{\Sp(k+2)}{\Sp(k)\times \Sp(2)}$ & $\Sph^4$ & $8k$ & $4$  \\
$\frac{\SU(k+4)}{\Sph ( \U(k) \times \Sp(2)\U(1)  )}$ & $\frac{\SU(k+4)}{ \Sph ( \U(k) \times \U(4))} $ & $\RP^5$ & $8k$ & $5$ \\ 
$\frac{\SO(k+8)}{\SO(k) \times \Spin(7) }$ & $\frac{\SO(k+8)}{\SO(k)\times \SO(8)}$ & $\RP^7$ & $8k$, $k \ge 3$, $k$ odd & $7$ \\
$\frac{\Spin(k+8)}{\SO(k) \times \Spin(7) }$ & $\frac{\SO(k+8)}{\SO(k)\times \SO(8)}$ & $\Sph^7$ & $8k$, $k = 1$, $k$ even & $7$ \\
  \hline
 \multicolumn{5}{|c|}{Exceptional cases} \\
 \hline
$\frac{F_4}{\Spin(8)}$  &$\frac{F_4}{\Spin(9)}=\OP^2$ & $\Sph^8$& $16$ & $8$\\
$\frac{E_6}{\Spin(8)\U(1)}$ & $\frac{E_6}{\Spin(10) \U(1)}=(\Comp \otimes \Octo)P^2$ & $\S^9$& $32$ & $9$\\
$\frac{E_7}{\Spin(11)\SU(2)}$ & $\frac{E_7}{\Spin(12) \SU(2)}=(\Quat \otimes \Octo) P^2$ & $\Sph^{11}$& $ 64$ & $11$ \\
$\frac{E_8}{\Spin(15)}$ & $\frac{E_8}{\Spin^+(16)} =(\Octo \otimes \Octo) P^2$ & $\Sph^{15}$& $128$ & $ 15$ \\
  \hline
\end{tabular}}
\caption{H-type submersions with a parallel horizontal Clifford structure and $\kappa >0$.}
\label{Table 1}
\end{table}

\begin{table}[H]
\centering
\scalebox{0.8}{
\begin{tabular}{|p{3.5cm}||p{5cm}|c|>{\centering\arraybackslash}p{1.8cm}|c|  }
  \hline
 $\M$ &   $\B $  & Fiber  & $\rank (\Ho)$ &$\rank (\V)$ \\
 \hline
 \hline
Negative Twistor space & Quaternion-K\"ahler with negative scalar curvature& $\Sph^2$ & $4k$ & $2$ \\
Negative 3-Sasakian &Quaternion-K\"ahler with negative scalar curvature& $\Sph^3$ & $4k$ & $3$ \\
Negative Quaternion-Sasakian  & Product of two quaternion-K\"ahler with negative scalar curvature  & $\RP^3 $ & $4k$ & $3$\\
$\frac{\Sp(q^+,1)\times \Sp(q^-,1)}{\Sp(q^+) \times \Sp(q^-)\times \Sp(1) }$ & $\Quat H^{q^+} \times \Quat H^{q^-}$ & $\Sph^3$ & $4(q^++q^-)$ & $3$ \\
 \hline
 \hline 
$\frac{\Sp(k,2)}{\Sp(k) \times \Spin(4) }$ & $\frac{\Sp(k,2)}{\Sp(k)\times \Sp(2)}$ & $\Sph^4$ & $8k$ & $4$  \\
$\frac{\SU(k,4)}{\Sph ( \U(k) \times \Sp(2)\U(1)  )}$ & $\frac{\SU(k,4)}{ \Sph ( \U(k) \times \U(4))} $ & $\RP^5$ & $8k$ & $5$ \\ 
$\frac{\SO(k,8)}{\SO(k) \times \Spin(7) }$ & $\frac{\SO(k,8)}{\SO(k)\times \SO(8)}$ & $\RP^7$ & $8k$, $k \ge 3$, $k$ odd & $7$ \\
$\frac{\Spin(k,8)}{\SO(k) \times \Spin(7) }$ & $\frac{\SO(k,8)}{\SO(k)\times \SO(8)}$ & $\Sph^7$ & $8k$, $k = 1$, $k$ even & 7 \\
  \hline
 \multicolumn{5}{|c|}{Exceptional cases} \\
 \hline
$\frac{F_4^{-20}}{\Spin(8)}$  &$\frac{F^{-20}_4}{\Spin(9)}=\OH^2$ & $\Sph^8$& $ 16$ & $8$ \\
$\frac{E_6^{-14}}{\Spin(8)\U(1)}$ & $\frac{E^{-14}_6}{\Spin(10) \U(1)}=(\Comp \otimes \Octo)H^2$ & $\Sph^9$& $32$ & $9$\\
$\frac{E_7^{-5}}{\Spin(11)\SU(2)}$ & $\frac{E_7^{-5}}{\Spin(12) \SU(2)}=(\Quat \otimes \Octo)H^2$ & $\Sph^{11}$& $64$ & $11$\\
$\frac{E^8_8}{\Spin(15)}$ & $\frac{E^8_8}{\Spin^+(16)} =(\Octo \otimes \Octo)H^2$ & $\Sph^{15}$& $128$ & $15$\\
  \hline
\end{tabular}}
\caption{H-type submersions   with a parallel horizontal Clifford structure and $\kappa <0$.}
\label{Table 2}
\end{table}

 The case $\kappa=0$ is special. It corresponds to H-type foliations for which the leaves are flat. Such foliations are described in Theorem \ref{flat leaves} where we prove that if the foliation comes from a totally geodesic submersion, then the base space of that submersion is K\"ahler for $m=1$, locally hyper-K\"ahler for $m=2$ or $m=3$ and flat for $m \ge 4$. We conclude the paper with several estimates on  sub-Riemannian diameter and   first eigenvalue of the sub-Laplacian.
   
\section{H-type foliations}

\subsection{Totally geodesic foliations }\label{s:setting}

Let $(\M,g)$ be a smooth, oriented, connected, Riemannian manifold with dimension $n+m$. For notational simplicity, for $X,Y \in \s\M$ we will often denote $\langle X, Y \rangle=g(X,Y)$. We assume that $\M$ is equipped with a Riemannian foliation with bundle-like complete metric $g$ and totally geodesic  $m$-dimensional leaves. The sub-bundle $\V$ formed by vectors tangent to the leaves is referred  to as the set of \emph{vertical directions}. The sub-bundle $\Ho$ which is normal to $\V$ is referred to as the set of \emph{horizontal directions}. If one denotes by $\Lie$ the Lie derivative, from  Theorem 5.19, p. 56 in \cite{Tondeur}, the bundle-like property for a Riemannian foliation is equivalent to the fact that for every $X \in \s\Ho$, $Z \in \Gamma (\V)$,
 \begin{equation}
 \Lie_Z g (X,X)=0,
 \end{equation}
 and from Theorem 5.23, p. 58 in \cite{Tondeur}, the totally geodesic foliation property is equivalent to the fact that for every $X \in \s\Ho$, $Z \in \Gamma (\V)$,
 \begin{equation}
 \Lie_X g (Z,Z)=0.
 \end{equation}
We simply refer to these structures as \emph{totally geodesic foliations} (the bundle-like property of the metric is always assumed in this paper).  We refer to \cite{BaudoinEMS2014, Tondeur} and references therein for details about the geometry of totally geodesic  foliations. For later reference, we note that those definitions extend to the case where $g$ is semi-Riemannian.

\paragraph{Important:} From now on, unless stated otherwise, we will always assume that $(\M,g)$ is a foliated Riemannian manifold such that the foliation is \emph{both} Riemannian and totally geodesic. We let $\V$ be the subbundle tangent to the leaves, with an orthogonal complement $\Ho$ which we assume is bracket-generating. We will denote this structure as $(\M, \Ho, g)$. 

\

 Preliminary examples of such structures include the following.
\begin{Example}(K-contact manifolds)
 Let $(\M,\theta)$ be a $2n+1$-dimensional smooth contact manifold with Reeb vector field $Z$. The Reeb foliation on $\M$ is given by the orbits of $Z$.  From  \cite{Sasaki60}, it is always possible to find a Riemannian metric $g$ and a $(1,1)$-tensor field $J$ on $\M$ so that for all vector fields $X, Y$
\begin{equation} \label{JmapKCont}
g(X,Z)=\theta(X),\quad J^2(X)=-X+\theta (X) Z, \quad 2g(JX,Y)= d\theta(X,Y).
\end{equation}
The triple $(\M, \theta,g)$ is called a contact Riemannian manifold. The Reeb foliation is totally geodesic with bundle like metric if and only if the Reeb vector field $Z$ is a Killing field. In that case, $(\M, \theta,g)$ is called a K-contact Riemannian manifold. Observe that the horizontal distribution $\Ho$ is then the kernel of $\theta$ and that $\Ho$ is bracket-generating because $\theta$ is a contact form. Sasakian manifolds are the $K$-contact manifolds for which $J$ is integrable (i.e. has a vanishing Nijenhuis tensor); see \cite{BoG} for further details on Sasakian foliations.
\end{Example}

\begin{Example}(Positive and negative 3K-contact manifolds)
Consider a smooth $(4n+3)$-dimensional Riemannian manifold $(\M,g)$, admitting three distinct K-contact structures i.e.\ non-degenerate one-forms $\theta_\alpha$, for $\alpha=1,2,3$ such that $(\M,\theta_\alpha,g)$ is a contact Riemannian manifold and each Reeb vector field $Z_\alpha$ is Killing for the Riemannian metric $g$. Define $J_\alpha$ relative to the contact structure $(\M, \theta_\alpha,g)$ as in \eqref{JmapKCont}. Furthermore, we assume that
\begin{equation}\label{eq:conditions3K}
(a)\quad g(Z_\alpha,Z_\beta) = \delta_{\alpha\beta}, \qquad (b)\quad [Z_\alpha,Z_\beta] = 2\epsilon_{\alpha\beta\gamma} Z_\gamma,
\end{equation}
where $\epsilon_{\alpha\beta\gamma}$ denotes the Levi-Civita symbol. Following \cite{Je} (see also \cite{Ishihara73, Tanno2}), we call $(\M,g)$ a 3K-contact (resp.\ negative 3K-contact) structure if for distinct $\alpha,\beta,\gamma$ it holds
\begin{equation}
J_\alpha J_\beta =\epsilon_{\alpha\beta\gamma} J_\gamma, \qquad (\text{resp. } J_\alpha J_\beta =-\epsilon_{\alpha\beta\gamma} J_\gamma).
\end{equation}
The bundle generated by the Reeb vector fields $\V = \spn\{Z_1,Z_2,Z_3\}$ is integrable and, thanks to the Killing condition, the leaves of the corresponding foliation are totally geodesic with bundle like metric. Therefore, letting
\begin{equation}
\Ho = \bigcap_{\alpha} \ker\theta_\alpha,
\end{equation}
we have $T_p \M = \Ho_p \oplus \V_p$, with $\Ho \perp \V$, and $(\M,\Ho,g)$ is a totally geodesic foliation.
\end{Example}

\begin{remark}[Notation for the foliation] \label{re:notation}
For a foliated Riemannian manifold, it is admittedly a bit unconventional to denote the foliation by the transverse bundle $\Ho$ rather than by bundle $\V$ tangent to the foliation or the collection of leaves of the foliation itself. However, since the $\Ho$ and $\V$ determine each other through the Riemannian metric $g$, and since much of our investigation is related to the sub-Riemannian manifold $(\M, \Ho, g|_{\Ho})$, we permit this slight abuse of notation.
\end{remark}

\begin{remark}[Bracket-generating condition] \label{re:Constant}
If $\Ho$ is bracket-generating subbundle of $T\M$, then by the Chow-Rashevskii theorem, any pair of points can be connected by a curve tangent to $\Ho$. In particular, this means that any function whose derivatives are zero in  the directions of $\Ho$, has to be constant. Furthermore, if $\nabla$ is any connection, then any tensor that is $\nabla$-parallel in the directions of $\Ho$ is uniquely determined by its value at one point.
\end{remark}

\subsection{The Bott connection}
There is a canonical connection on $\M$ that preserves the metric and the foliation structure (see~\cite{BaudoinEMS2014} and Chapter 5 in~\cite{Tondeur}), the Bott connection. It is uniquely characterized by the following proposition which is a special case of Lemma 2.13 in \cite{Hladky}.

\begin{proposition}
Let $(\M,\Ho,g)$ be a totally geodesic, Riemannian foliation with vertical bundle $\V$. There exists a unique metric connection $\nabla$ on $\M$, called the Bott connection of the foliation, such that:
\begin{itemize}
\item $\Ho$ and $\V$ are $\nabla$-parallel, i.e. for every $X \in \s\Ho$, $Y \in \s{T\M}$ and $Z \in \Gamma (\V)$,
\begin{equation}
\nabla_Y X \in \s\Ho, \quad \nabla_Y Z \in \s\V;
\end{equation}
\item The torsion $T$ of $\nabla$ satisfies 
\begin{equation}
T(\Ho,\Ho) \subset \V, \qquad T(\Ho,\V)=0, \qquad T(\V,\V) =0.
\end{equation}
\end{itemize}
\end{proposition}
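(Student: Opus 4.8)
The plan is to prove the existence and uniqueness of the Bott connection by reducing everything to the Levi-Civita connection $\nabla^g$ of the ambient metric $g$ and the orthogonal projections $p_\hor\colon T\M\to\hor$ and $p_\ver\colon T\M\to\ver$. For existence, I would write down the explicit formula
\begin{equation}
\nabla_X Y = p_\hor\!\left(\nabla^g_X p_\hor Y\right) + p_\ver\!\left(\nabla^g_X p_\ver Y\right), \qquad X,Y\in\Gamma(T\M),
\end{equation}
and then verify the required properties one by one. Metric compatibility follows because $\nabla^g$ is metric and the two projections are orthogonal and complementary, so that $\nabla$ differs from $\nabla^g$ by a skew-symmetric (with respect to $g$) endomorphism-valued one-form. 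The parallelism of $\hor$ and $\ver$ is immediate from the formula, since $\nabla_Y X$ is by construction the $\hor$-component when $X\in\Gamma(\hor)$ and the $\ver$-component when $X\in\Gamma(\ver)$.

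The substantive part is computing the torsion $T(X,Y)=\nabla_XY-\nabla_YX-[X,Y]$ and checking the three bracket conditions. For $X,Y\in\Gamma(\hor)$: both $\nabla_XY$ and $\nabla_YX$ are horizontal by the parallelism just established, so $T(X,Y)+[X,Y]\in\Gamma(\hor)$, and one must show $T(X,Y)\in\Gamma(\ver)$, equivalently that $p_\hor(T(X,Y))=0$; this follows from the symmetry of the Levi-Civita connection, $\nabla^g_XY-\nabla^g_YX=[X,Y]$, after projecting. For the mixed case $X\in\Gamma(\hor)$, $Z\in\Gamma(\ver)$, the claim $T(X,Z)=0$ is where the totally geodesic and bundle-like hypotheses enter: one expands
\begin{equation}
T(X,Z) = p_\hor(\nabla^g_X X')\big|_{\text{corrections}} - p_\ver(\nabla^g_Z Z')\big|_{\text{corrections}} - [X,Z]
\end{equation}
— more carefully, $T(X,Z)=p_\hor(\nabla^g_XZ) + p_\ver(\nabla^g_ZX) - [X,Z] = p_\hor(\nabla^g_XZ - \nabla^g_ZX - [X,Z]) + p_\ver(\dots) $, and one uses that $\mathcal L_X g(Z,Z)=0$ controls the $\hor$-component of $\nabla^g_ZX$ while $\mathcal L_Z g(X,X)=0$ controls the $\ver$-component of $\nabla^g_XZ$; combined with $[X,Z]$ having both components, the pieces cancel. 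Finally $T(Z,Z')\in\Gamma(\ver)$ for $Z,Z'\in\Gamma(\ver)$ because $\ver$ is integrable (the leaves are submanifolds), so $[Z,Z']\in\Gamma(\ver)$, and both covariant derivatives are vertical; that $T(\ver,\ver)=0$ outright then follows because on a leaf the Bott connection restricts to the Levi-Civita connection of the leaf, which is torsion-free — here I would invoke that the leaves are totally geodesic so that $\nabla^g$ restricted to $\ver$ along a leaf already preserves $\ver$.

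For uniqueness, I would argue that the torsion conditions plus metric compatibility plus the parallelism of $\hor$ and $\ver$ pin down $\nabla$ completely. The standard Koszul-type argument works: given two such connections $\nabla^1,\nabla^2$, their difference $A(X,Y)=\nabla^1_XY-\nabla^2_XY$ is tensorial, $g$-skew in the last two slots (from metric compatibility), respects the splitting (from parallelism, so $A(X,\hor)\subset\hor$ and $A(X,\ver)\subset\ver$), and is symmetric in $X,Y$ when both lie in $\hor$ or both in $\ver$ (from the torsion conditions there) while $A(X,Z)=A(Z,X)$ trivially for the mixed case since $T(\hor,\ver)=0$ for both. A tensor that is simultaneously symmetric and skew-symmetric in the relevant pairs must vanish, exactly as in the proof of uniqueness of the Levi-Civita connection, block by block. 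I expect the main obstacle to be the careful bookkeeping in the mixed torsion identity $T(\hor,\ver)=0$: one has to correctly identify which Lie-derivative vanishing statement kills which projected component of the Levi-Civita derivative, and this is the only place the defining hypotheses of a totally geodesic foliation (rather than just an arbitrary orthogonal splitting) are actually used. Since the proposition is quoted as a special case of Lemma 2.13 in \cite{Hladky}, it would also be legitimate simply to cite that reference after recording the explicit formula above; I would include at least the formula and the verification of the mixed torsion condition so the reader sees where the geometric assumptions bite.
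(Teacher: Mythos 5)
The paper itself gives no proof of this proposition --- it cites Lemma 2.13 of \cite{Hladky} and then simply records the explicit formula for $\nabla$ --- so the comparison is against that formula, and this is where your proposal has a genuine gap. Your uniqueness argument (difference tensor, block-by-block symmetric-and-skew $\Rightarrow$ zero) is correct. But your existence witness is the wrong connection. The formula
\[
\nabla_X Y = \pi_{\hor}\big(\nabla^g_X \pi_{\hor} Y\big) + \pi_{\ver}\big(\nabla^g_X \pi_{\ver} Y\big)
\]
does not define the Bott connection: in the mixed slots the Bott connection is $\nabla_Z X=\pi_{\hor}([Z,X])$ and $\nabla_X Z=\pi_{\ver}([X,Z])$ for $X\in\Gamma(\hor)$, $Z\in\Gamma(\ver)$, not a projected Levi-Civita derivative. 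For your connection one computes
\[
T(X,Z) = -\pi_{\hor}\big(\nabla^g_X Z\big) + \pi_{\ver}\big(\nabla^g_Z X\big).
\]
The second term vanishes by total geodesy of the leaves, but the first does not: the bundle-like condition gives $\langle \nabla^g_X W, Z\rangle = \tfrac12\langle [X,W],Z\rangle$ for $W\in\Gamma(\hor)$, hence $\pi_{\hor}(\nabla^g_X Z)=\tfrac12 J_Z X$ and $T(X,Z)=-\tfrac12 J_Z X$. This is nonzero precisely when $\hor$ is non-integrable, and for an H-type foliation $J_Z$ is invertible for $Z\neq 0$, so the mixed torsion of your candidate never vanishes. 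The asserted cancellation (``the pieces cancel'') is therefore false: the two Lie-derivative hypotheses control the symmetric parts of $\pi_{\ver}(\nabla^g_\cdot\,\cdot)$ on $\hor\times\hor$ and of $\pi_{\hor}(\nabla^g_\cdot\,\cdot)$ on $\ver\times\ver$; they do not kill $\pi_{\hor}(\nabla^g_X Z)$, whose skew part is exactly the O'Neill tensor of the non-integrable distribution $\hor$.

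The repair is to take the paper's displayed formula as the existence witness: $\pi_{\hor}(\nabla^g_XY)$ on $\hor\times\hor$, $\pi_{\ver}(\nabla^g_XY)$ on $\ver\times\ver$, and $\pi_{\hor}([X,Y])$, $\pi_{\ver}([X,Y])$ in the mixed cases. Then $T(\hor,\ver)=0$ is automatic, since $\nabla_XZ-\nabla_ZX=\pi_{\ver}([X,Z])+\pi_{\hor}([X,Z])=[X,Z]$; the conditions $T(\hor,\hor)\subset\ver$ and $T(\ver,\ver)=0$ follow as in your argument; and the place where the geometric hypotheses actually bite is metric compatibility in the mixed directions, namely $Z\langle X,Y\rangle=\langle[Z,X],Y\rangle+\langle X,[Z,Y]\rangle$ (bundle-like) and $X\langle Z,W\rangle=\langle[X,Z],W\rangle+\langle Z,[X,W]\rangle$ (totally geodesic). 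With that corrected, your uniqueness argument completes the proof.
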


More explicitly, the Bott connection is given as follows:

\begin{equation}
\nabla_X Y =
\begin{cases}
\pi_{\Ho} ( \nabla_X^g Y) & X,Y \in \Gamma (\Ho), \\
\pi_{\Ho} ( [X,Y])   & X \in \Gamma (\V), Y \in \Gamma (\Ho), \\
\pi_{\V} ( [X,Y])   &X \in \Gamma (\Ho), Y \in \Gamma (\V), \\
\pi_{\V} ( \nabla_X^g Y) & X,Y \in \Gamma (\V),
\end{cases}
\end{equation}
where $\nabla^g$ is the Levi-Civita connection of the metric $g$ and $\pi_\Ho$ (resp. $\pi_\V$) the projection on $\Ho$ (resp. $\V$). It is easy to check that  the Bott connection has a torsion which is given by:
\begin{equation}
T(X,Y)=-\pi_\V ([\pi_\Ho X, \pi_\Ho Y]).
\end{equation}
Then, for $Z \in \s\V$, there is a  unique skew-symmetric fiber endomorphism  $J_Z:\s\Ho \to \s\Ho$ such that for all horizontal vector fields $X$ and $Y$,
\begin{align}\label{Jmap}
g_\Ho (J_Z X,Y)= g_\V (Z,T(X,Y)),
\end{align}
where $T$ is the torsion tensor of $\nabla$. We then extend $J_{Z}$ to be $0$ on  $\s\V$. Also, if $Z\in \s\Ho$, from \eqref{Jmap} we set $J_Z=0$. 

\begin{Example}
Let $(\M, \theta,g)$ be a K-contact Riemannian manifold. The Bott connection coincides  with Tanno's connection that was introduced in \cite{Tanno}. In the case where $(\M, \theta,g)$ is Sasakian, the Bott connection coincides  with the Tanaka-Webster connection. 
\end{Example}

\begin{Example}
Let $(\M, \theta_1,\theta_2,\theta_3, g)$ be a 3K-contact Riemannian manifold with dimension strictly greater than 7. The Bott connection coincides then  with the Biquard connection. See Section 1.2 in \cite{BI17} for the definition and basic properties of the Biquard connection.
\end{Example}

The following lemmas will be used several times.

\begin{lemma}\label{skew J}
Let $(\M, \Ho,g)$ be a totally geodesic foliation such that $\nabla_\Ho T=0$, i.e. $\nabla_XT=0$ for every $X \in \s\Ho$. Then
\begin{equation}
(\nabla_Z J)_W = -(\nabla_W J)_Z, \qquad \forall \, W,Z \in \s{T\M}.
\end{equation}
\end{lemma}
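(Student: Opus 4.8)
The plan is to exploit the defining relation \eqref{Jmap} together with the hypothesis $\nabla_X T = 0$ for horizontal $X$, and to reduce everything to a statement about the torsion tensor $T$. First I would recall that for $Z \in \Gamma(\ver)$ and $X, Y \in \Gamma(\hor)$ one has $g_\hor(J_Z X, Y) = g_\ver(Z, T(X,Y))$, that $J_Z$ kills vertical vectors, and that $J_Z = 0$ when $Z$ is horizontal; since $\hor$ and $\ver$ are $\nabla$-parallel, these structural facts are preserved by covariant differentiation. So to compute $(\nabla_W J)_Z$ it suffices to evaluate $g_\hor\big((\nabla_W J)_Z X, Y\big)$ for $X, Y \in \Gamma(\hor)$, and by $\nabla$-parallelism of $\ver$ this equals $g_\ver\big(Z, (\nabla_W T)(X,Y)\big)$ — in other words, the tensor $(\nabla_W J)_Z$ is completely determined by $(\nabla_W T)(X,Y)$ through the same formula \eqref{Jmap}.

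The heart of the matter is therefore the symmetry claim
\[
g_\ver\big(Z, (\nabla_W T)(X,Y)\big) = -\,g_\ver\big(W, (\nabla_Z T)(X,Y)\big)
\]
for all $W, Z \in \Gamma(T\M)$ and $X, Y \in \Gamma(\hor)$. By the decomposition $T\M = \hor \oplus \ver$ and multilinearity it is enough to treat the cases where each of $W, Z$ is purely horizontal or purely vertical. If both $W$ and $Z$ are horizontal, the hypothesis $\nabla_\hor T = 0$ makes both sides vanish. If both are vertical, one side involves $\nabla_W$ with $W$ vertical and the other $\nabla_Z$ with $Z$ vertical, and the identity to be shown is exactly antisymmetry of $(u,v) \mapsto g_\ver(v, (\nabla_u T)(X,Y))$ in the two vertical slots; this I would obtain from the second Bianchi identity for the Bott connection, $\mathfrak{S}_{u,v,X}\big((\nabla_u T)(v,X) + T(T(u,v),X) + R(u,v)X\big) = 0$ paired against $Z$, combined with the fact that $T(\ver,\ver) = 0$, $T(\hor,\ver) = 0$ and the $\hor/\ver$-parallelism which forces $R(\ver,\ver)\hor \subset \hor$ to play no role after pairing with a vertical vector — a short bookkeeping computation. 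The mixed case (one horizontal, one vertical) reduces one side to $0$ by $\nabla_\hor T = 0$, so I must show the other side vanishes too, i.e. $g_\ver(Z, (\nabla_W T)(X,Y)) = 0$ when $W$ is vertical and $Z$ is horizontal; but $(\nabla_W T)(X,Y)$ is a vertical vector (again by $\nabla$-parallelism of $\ver$ and $T(\hor,\hor) \subset \ver$), so its pairing against the horizontal $Z$ is automatically zero.

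Assembling these: for all $W, Z$ and all horizontal $X, Y$,
\[
g_\hor\big((\nabla_W J)_Z X, Y\big) = g_\ver\big(Z, (\nabla_W T)(X,Y)\big) = -\,g_\ver\big(W, (\nabla_Z T)(X,Y)\big) = -\,g_\hor\big((\nabla_Z J)_W X, Y\big),
\]
and since both $(\nabla_W J)_Z$ and $(\nabla_Z J)_W$ vanish on $\ver$ and are valued in $\hor$, the nondegeneracy of $g_\hor$ on $\hor$ gives $(\nabla_Z J)_W = -(\nabla_W J)_Z$. The main obstacle I anticipate is the purely vertical case: getting the antisymmetry in the two vertical slots cleanly requires care with the Bianchi identity for a connection with torsion and with which curvature/torsion terms survive pairing against a vertical vector, and one must be sure that the semi-Riemannian case (where $g_\ver$ may be indefinite) does not break the final nondegeneracy step — but nondegeneracy of $g$ on each of $\hor$ and $\ver$ is part of the standing hypotheses, so this is safe.
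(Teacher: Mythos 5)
Your reduction to the bilinear identity $g\big((\nabla_W J)_Z X, Y\big) = g\big(Z, (\nabla_W T)(X,Y)\big)$ is correct, and your treatment of the horizontal--horizontal and mixed cases is fine; the overall strategy (reduce to $Z,W\in\Gamma(\ver)$ and invoke the algebraic Bianchi identity) is the same as the paper's. However, the one step you yourself flag as the heart of the matter is specified incorrectly, and as written it fails. The identity you invoke is the \emph{first} (algebraic) Bianchi identity for a connection with torsion, not the second, but the real problem is the choice of slots: with the cyclic sum $\circlearrowright_{u,v,X}$ taken over \emph{two vertical} vectors $u,v$ and \emph{one horizontal} vector $X$, every single term $(\nabla_u T)(v,X)$, $(\nabla_v T)(X,u)$, $(\nabla_X T)(u,v)$ vanishes identically: $T(\hor,\ver)=0$ and $T(\ver,\ver)=0$ are tensorial identities preserved by $\nabla$ (which respects the splitting), so their covariant derivatives in any direction are zero, and likewise all the $T(T(\cdot,\cdot),\cdot)$ terms die. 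Your Bianchi identity in that configuration degenerates to $\circlearrowright R(u,v)X=0$, a pure curvature statement containing no information whatsoever about $(\nabla_u T)(X,Y)$, so it cannot deliver the antisymmetry of $(u,v)\mapsto g\big(v,(\nabla_u T)(X,Y)\big)$.

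The configuration that works is the opposite one: take the cyclic sum over $X,Y\in\Gamma(\hor)$ and a single $Z\in\Gamma(\ver)$. Then the only surviving $\nabla T$ term is $(\nabla_Z T)(X,Y)$ (the other two are killed by $\nabla_\hor T=0$), all $T(T(\cdot,\cdot),\cdot)$ terms vanish as before, and on the curvature side $R(Y,Z)X$ and $R(Z,X)Y$ are horizontal while $\langle R(X,Y)Z,Z\rangle=0$ by skew-adjointness of $R(X,Y)$ for the metric connection. Pairing against $Z$ therefore gives $\langle (\nabla_Z T)(X,Y),Z\rangle=0$, i.e.\ $(\nabla_Z J)_Z=0$, and polarization in $Z$ yields the claimed antisymmetry. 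This is exactly the paper's proof; your write-up needs this correction in the vertical--vertical case to go through.
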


\begin{proof}
Since the torsion is horizontally parallel, and by definition \eqref{Jmap} of $J$, we only need to prove the statement for $Z,W \in \s\V$. Let $R$ denote the Riemann curvature tensor of $\nabla$. Using the first Bianchi identity, with $\circlearrowright$ denoting the cyclic sum, we have for any $Z \in \Gamma (\V)$ and $X,Y \in \Gamma (\Ho)$
\begin{equation}
0 = \langle R(X,Y) Z, Z \rangle = \langle \circlearrowright R(X,Y)Z, Z  \rangle = \langle (\nabla_Z T)(X,Y) ,Z \rangle.
\end{equation}
Therefore $(\nabla_Z J)_Z=0$ for all $Z \in \s\V$, which implies the statement.
\end{proof}

\begin{lemma} \label{prop:TGCurvature}
Let $(\M, \Ho,g)$ be a totally geodesic foliation with $\nabla_\Ho T = 0$. Let $R$ denote the Riemann curvature tensor of $\nabla$.  Define for $U,V,W \in T\M$,
\begin{equation}
R_\Ho(U,V)W = R(U_\Ho , V_\Ho ) W_\Ho , \qquad R_\V(U,V)W =  R(U_\V , V_\V ) W_\V .
\end{equation}
Then,
 \begin{equation}
 R(U,V) W = R_\Ho(U,V) W + R_\V (U, V) W + (\nabla_{W} T)(U, V).
 \end{equation}
\end{lemma}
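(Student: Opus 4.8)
The plan is to decompose the curvature operator $R(U,V)W$ according to where the arguments live --- horizontal or vertical --- and to show that all the "mixed" contributions collapse to the single term $(\nabla_W T)(U,V)$. By the multilinearity of $R$ in all three slots, it suffices to evaluate $R(U,V)W$ when each of $U, V, W$ is either purely horizontal or purely vertical, i.e.\ on the eight combinations coming from $T\M = \hor \oplus \ver$. The Bott connection preserves $\hor$ and $\ver$, so $R(U,V)$ maps $\hor$ to $\hor$ and $\ver$ to $\ver$; this already forces $R(U,V)W$ to be horizontal when $W \in \s{\hor}$ and vertical when $W \in \s{\ver}$, which kills many cross terms. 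The two genuinely nontrivial facts I would need are: (i) when $U, V$ are one horizontal and one vertical, $R(U,V)W = 0$ for $W$ of either type; and (ii) when $U, V$ are both horizontal but $W$ is vertical, or both vertical but $W$ is horizontal, the curvature is expressed entirely through $(\nabla_W T)(U,V)$.

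For step (i), I would use the first Bianchi identity for a connection with torsion, namely
\begin{equation}
\circlearrowright R(U,V)W = \circlearrowright \left( (\nabla_U T)(V,W) + T(T(U,V),W) \right),
\end{equation}
together with the structure of the Bott torsion: $T(\hor,\ver) = 0$, $T(\ver,\ver)=0$, and $T(\hor,\hor) \subset \ver$. For $U \in \s\hor$, $V \in \s\ver$, the right-hand side simplifies drastically because most torsion terms vanish, and the hypothesis $\nabla_\hor T = 0$ kills the $(\nabla_U T)$ term; combined with the parallelism of $\hor, \ver$ (which controls which components of the cyclic sum can be nonzero) this should yield $R(U,V)W = 0$. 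For step (ii), take $U, V \in \s\hor$ and $W \in \s\ver$: again the Bianchi identity gives $\circlearrowright R(U,V)W$ in terms of $(\nabla_W T)(U,V)$ plus torsion-of-torsion terms, and since the other two cyclic curvature terms $R(V,W)U$ and $R(W,U)V$ have a vertical argument in the second slot paired against horizontal vectors --- which one shows vanish by step (i) applied with roles permuted, or directly --- one is left with $R(U,V)W = (\nabla_W T)(U,V)$ modulo the $T(T(\cdot,\cdot),\cdot)$ terms, which I would check vanish using $T(\hor,\ver)=0$. The symmetric case $U,V \in \s\ver$, $W \in \s\hor$ is handled the same way, and here $(\nabla_W T)(U,V)$ is automatically zero since $T(\ver,\ver)=0$ and $T$ is horizontally parallel, consistent with $R_\ver$ already accounting for it.

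Finally I would assemble the cases: for $U, V, W$ all horizontal, $R(U,V)W = R_\hor(U,V)W$ and $(\nabla_W T)(U,V)$ need not vanish but lands in $\ver$, whereas $R(U,V)W$ is horizontal --- so one must be slightly careful that the claimed identity still reads correctly; in fact $R_\ver$ vanishes on all-horizontal input and $R_\hor$ vanishes on all-vertical input, so the decomposition is consistent slot by slot, and the only place the $(\nabla_W T)$ term is switched on is precisely the mixed case $U,V \in \hor$, $W \in \ver$. The main obstacle I anticipate is bookkeeping: keeping straight which of the three cyclic terms in the torsioned Bianchi identity survive in each of the eight cases, and making sure the torsion-of-torsion terms $T(T(U,V),W)$ are correctly shown to vanish (they do, because any such term pairs a vertical output of $T$ with another argument, and $T$ annihilates $\ver$ in both remaining configurations). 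Once the case analysis is organized cleanly, each individual verification is a one-line consequence of $\nabla_\hor T = 0$, the parallelism of the splitting, and the torsion constraints.
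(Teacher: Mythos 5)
Your overall strategy (decompose by multilinearity over $T\M=\hor\oplus\ver$, use that $\nabla$ preserves the splitting, and sort the cases via the torsioned first Bianchi identity) matches the paper's, and your treatment of the cases $U,V\in\s{\hor},\,W\in\s{\ver}$ and $U,V\in\s{\ver},\,W\in\s{\hor}$ is correct. The gap is in your step (i). For $X,Y\in\s{\hor}$ and $Z\in\s{\ver}$ the Bianchi identity reads
\begin{equation}
R(X,Z)Y+R(Z,Y)X+R(Y,X)Z=(\nabla_Z T)(Y,X),
\end{equation}
and the two mixed-curvature terms $R(X,Z)Y$ and $R(Z,Y)X$ both land in $\hor$, so the parallelism of the splitting does \emph{not} isolate them: projecting onto $\hor$ yields only the symmetry $R(X,Z)Y=R(Y,Z)X$, not the vanishing you assert. (The same happens for $R(Z,X)W$ with $Z,W$ vertical, $X$ horizontal.) Nothing in your list of inputs — Bianchi, the torsion constraints, $\nabla_\hor T=0$, parallelism of $\hor,\ver$ — forces these terms to zero, so as written the key claim of step (i) is unproved.

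Two ways to close it. The paper's route: invoke the identity from the appendix of \cite{BaGr17} for the pair-symmetry defect of the curvature of a metric connection, which together with Lemma \ref{skew J} gives
$\langle R(X,Y)V,W\rangle-\langle R(V,W)X,Y\rangle=\langle(\nabla_V T)(X,Y),W\rangle-\langle(\nabla_X T)(V,W),Y\rangle$,
and then $\langle R(X,Z)Y,W\rangle=\langle R(Y,W)X,Z\rangle+(\text{torsion-derivative terms})=0$. Alternatively, you can stay elementary: you have symmetry of $B(X,Y,W):=\langle R(X,Z)Y,W\rangle$ in $(X,Y)$ from Bianchi, and antisymmetry in $(Y,W)$ because the Bott connection is metric, so $R(X,Z)$ is a skew-symmetric operator; a trilinear form symmetric in one overlapping pair of slots and antisymmetric in another vanishes identically. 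Either supplement is needed — note you never invoked metric-compatibility in step (i), and it is indispensable there. (A minor further slip: for $U,V,W$ all horizontal, $(\nabla_W T)(U,V)$ \emph{does} vanish, by the hypothesis $\nabla_\hor T=0$, so no extra care is needed in that case.)
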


\begin{proof}
The result follows from considering each of the possible projections, the first Bianchi identity and formulas relating to the anti-symmetric part of the curvature tensor. Note first that since $\nabla$ preserves $\Ho$ and $\V$, we have $\langle R(\, \cdot \,, \, \cdot \, ) V_\V, V_\Ho \rangle = - \langle V_\V, R(\, \cdot \,, \, \cdot \, ) V_\Ho \rangle = 0$. It follows that
\begin{equation} \label{DecompositionR}
\begin{aligned}
& \langle R(X,Y) V, W \rangle - \langle R_\Ho(X,Y) V, W \rangle - \langle R_\V(X,Y) V, W \rangle \\
& = \langle R(X_\Ho,Y_\V) V_\Ho, W_\Ho \rangle + \langle R(X_\V,Y_\Ho) V_\Ho, W_\Ho \rangle + \langle R(X_\V,Y_\V) V_\Ho, W_\Ho \rangle \\
& \quad  + \langle R(X_\Ho ,Y_\Ho) V_\V, W_\V \rangle + \langle R(X_\Ho ,Y_\V) V_\V, W_\V \rangle + \langle R(X_\V,Y_\Ho) V_\V, W_\V \rangle 
\end{aligned}
\end{equation}
Using the first Bianchi identity, we have
$$\langle R(X_\V, Y_\V) V_\Ho, W_\Ho \rangle = \langle \circlearrowright R(X_\V, Y_\V) V_\Ho, W_\Ho \rangle =0,$$
and we similarly have $\langle R(X_\Ho, Y_\Ho) V_\V, W_{\V} \rangle = \langle (\nabla_{V_\V} T)(X_\Ho, Y_\Ho), W_\V \rangle$.
To obtain the remaining terms of \eqref{DecompositionR}, we will use the following result found in \cite[Appendix]{BaGr17}. Define the tensor
\begin{equation*}
A(X,Y):= T(X,Y)  -  J_X Y -  J_Y X.
\end{equation*}
Then for any connection preserving the metric, we have
\begin{align*}
2\langle R(X,Y) V,  W \rangle - 2\langle R(V,W) X, Y \rangle =&  \langle (\nabla_{X} A)(Y,V) -(\nabla_{Y} A)(X ,V), W \rangle \\ 
&+ \langle (\nabla_{W} A)(V,X) - (\nabla_{V} A)(W,X), Y \rangle.
\end{align*}
If we use the property $(\nabla_X J)_Y = - (\nabla_Y J)_X$, we obtain
\begin{equation} \label{BottAntiSym}
\langle R(X,Y) V,  W \rangle - \langle R(V,W) X, Y \rangle =  \langle (\nabla_{V} T)(X, Y),  W \rangle  -  \langle (\nabla_{X} T)(V, W) ,Y \rangle.
\end{equation}
Using equation \eqref{BottAntiSym}, we have
\begin{align*}
& \langle R(X_\Ho,Y_\V) V_\Ho,  W_\Ho \rangle = \langle R(X_\Ho,Y_\V) V_\Ho,  W_\Ho \rangle - \langle R(V_\Ho,W_\Ho) X_\Ho, Y_\V \rangle \\
& =  \langle (\nabla_{V_\Ho} T)(X_\Ho, Y_\V),  W_\Ho \rangle  -  \langle (\nabla_{X_\Ho} T)(V_\Ho, W_\Ho) ,Y_\V \rangle =0
\end{align*}
and similarly $\langle R(X_\V, Y_\Ho)V_\V, W_\V\rangle =0$.
Inserting all of these identities into \eqref{DecompositionR}, we have the result.
\end{proof}

\subsection{H-type foliations}

\begin{definition}\label{htype}
We say that $(\M,\Ho,g)$ is an H-type foliation if for every $Z \in \Gamma (\V)$ and $X,Y  \in \Gamma (\Ho)$, 
\begin{align}\label{jhgfr}
\langle J_Z X ,J_Z Y \rangle = \| Z\|^2\langle  X , Y \rangle.
\end{align}
Moreover:
\begin{itemize}
\item If the horizontal divergence of the torsion of the Bott connection is zero, then we say that $(\M,\Ho,g)$ is an H-type foliation of Yang-Mills type.
\item If the torsion of the Bott connection is horizontally parallel, i.e.\ $\nabla_\Ho T=0$, then we say that $(\M,\Ho,g)$ is an H-type foliation with horizontally parallel torsion.
\item If the torsion of the Bott  connection is completely parallel, i.e.\ $\nabla T=0$, then we say that $(\M,\Ho,g)$ is an H-type foliation with parallel torsion.
\end{itemize}
\end{definition}

\begin{Remark}\label{normalization}
We note that due to the normalization \eqref{Jmap}, the unit odd-dimensional  sphere $\Sph^{2n+1}$ with its canonical metric is not H-type for the standard Reeb foliation, since one can compute that in that case
\[
\langle J_Z X ,J_Z Y \rangle = 4\| Z\|^2\langle  X , Y \rangle.
\]
However, if one considers the canonical variation of the metric $g$ given by $g_\ve =g_\Ho \oplus \frac{1}{\ve} g_\V$, $\ve >0$, then $(\M,\Ho,g_\ve)$ is a totally geodesic foliation and the corresponding $J$-map is given by $J^\ve=\frac{1}{\ve} J$. Thus, if $(\M,\Ho,g)$ is a totally geodesic foliation such that 
\begin{align}\label{jhgfr2}
\langle J_Z X ,J_Z Y \rangle = \lambda \| Z\|^2\langle  X , Y \rangle,
\end{align}
for some $\lambda >0$, then $(\M,\Ho,g_\lambda)$ is an H-type foliation. This rescaling  does not affect the intrinsic sub-Riemannian geometry of the triple $(\M,\Ho,g_\Ho)$. The condition \eqref{jhgfr2} is a special case of a generalized H-type condition introduced for Carnot groups in \cite[Definition 8]{Rizzi18}.
\end{Remark}

\begin{Remark}
Obviously, parallel torsion $\implies$ horizontally parallel torsion $\implies$ Yang-Mills.  The Yang-Mills assumption plays an important role in the theory of generalized curvature dimension inequalities (see \cite{BG17}, \cite{GrTh16a}) and will be shown to always be satisfied, see Theorem \ref{th:AllYM}. The meaning of the other two assumptions will be apparent in the next sections.
\end{Remark}

\begin{Remark}
As a consequence of the H-type condition, one has for every $X \in \s{\Ho}, Z \in \s{\V}$,  $-\pi_\V ([X,J_ZX])=T(X,J_ZX)= \| X \|^2 Z$. Thus for any $X \in \s\Ho$, we have that $T\M$ is generated by $[X,\Ho]$ and $\Ho$ and in particular $\Ho$ is automatically bracket-generating.
\end{Remark}

\begin{table}
\centering
\scalebox{0.8}{
\begin{tabular}{| l | l | l |}
\hline 
\textbf{Structure} &  Torsion & Reference \\ \hline 
\hline \multicolumn{3}{|l|}{\textbf{Complex Type, \(m = 1, n = 2k\)}} \\ \hline
K-Contact &  YM & \cite{BRcontact} \cite{BW14}\\ \hline
Sasakian  & CP & \cite{BRcontact}  \cite{BoG} \\ \hline
Heisenberg Group   & CP &  \cite{Calin09}  \\ \hline
Hopf Fibration  \(\Sph^1 \hookrightarrow \Sph^{2k+1} \to \CP^k\)  & CP & \cite{BW1} \\ \hline
Anti de-Sitter Fibration  \(\Sph^1 \hookrightarrow \AdS^{2k+1}(\Comp) \to \CH^k\) &  CP &  \cite{Bad02} \cite{W1} \\ \hline
\hline \multicolumn{3}{|l|}{\textbf{Twistor Type, \(m = 2, n = 4k\)}} \\ \hline
Twistor space over quaternionic K\"ahler manifold & HP & \cite{Hadfield14} \cite{Sal}  \\ \hline
Projective Twistor space   \(\CP^1 \hookrightarrow \CP^{2k+1} \to \Quat P^k\) & HP & \cite{BW2} \\ \hline
Hyperbolic Twistor space   \(\CP^1 \hookrightarrow \CH^{2k+1} \to \Quat H^k\) & HP & \cite{BDW18} \cite{Bad02}  \\ \hline
\hline \multicolumn{3}{|l|}{\textbf{Quaternionic Type, \(m = 3, n = 4k\)}} \\ \hline
3K-contact  & YM & \cite{Je} \cite{Tanno2} \\ \hline
Negative 3K-contact   & YM & \cite{Je} \cite{Tanno2} \\ \hline
3-Sasakian  & HP & \cite{MR1798609} \cite{RS-3sas} \\ \hline
Negative 3-Sasakian   & HP &  \cite{MR1798609} \\ \hline
Torus bundle over hyperk\"ahler manifolds & CP & \cite{Her} \\ \hline
Quaternionic Heisenberg Group & CP &\cite{Calin09}  \\ \hline
Quaternionic Hopf Fibration  \(\SU(2) \hookrightarrow \Sph^{4k+3} \to \Quat P^k\)  & HP & \cite{BW2} \\ \hline
Quaternionic Anti de-Sitter Fibration  \(\SU(2) \hookrightarrow \AdS^{4k+3}(\Quat) \to \Quat H^k\)  & HP &  \cite{BDW18} \cite{Bad02}  \\ \hline
\hline \multicolumn{3}{|l|}{\textbf{Octonionic Type, \(m = 7, n = 8\)}} \\ \hline
Octonionic Heisenberg Group & CP & \cite{Calin09}  \\ \hline
Octonionic Hopf Fibration  \(\Sph^7 \hookrightarrow \Sph^{15} \to \OP^1\) & HP & \cite{Ornea13} \\ \hline
Octonionic Anti de-Sitter Fibration  \(\Sph^7 \hookrightarrow \AdS^{15}(\Octo) \to \OH^1\) & HP & \cite{Bad02}  \\ \hline
\hline \textbf{H-type Groups}, $m$ is arbitrary  & CP & \cite{Cowling91}  \cite{Kaplan} \\ \hline
\end{tabular}}
\caption{Some examples of H-type foliations.}
\label{Table 3}

\end{table}

From the H-type condition, $\Ho_p$ is for every $p \in \M$ a $\Cl(\V_p)$-module, where $\Cl(\V_p)$ denotes the Clifford algebra of $\V_p$.  Algebraic properties of Clifford modules are  well-known a (see for instance \cite{Cowling91}) and we shall make  use of  some of the most basic ones without further reference. To motivate the study of H-type foliations and stress that they provide a unified framework for many structures previously studied in the literature, we  point out in Table \ref{Table 3} several distinguished classes. More examples will be obtained as a consequence of the results of  Section \ref{s:curvature constancy} (see the two tables in the Introduction). For the Torsion column, YM means Yang-Mills, HP means horizontally parallel and CP means completely parallel. As a possible guide to the reader, in the Reference column we point out some references in the literature where the structure has been studied, sometimes with a sub-Riemannian point of view.

\subsection{Quaternionic structures}\label{quaternionic structures}

In this section, we introduce a  remarkable subclass of H-type foliations which encompass 3K-contact and negative 3K-contact manifolds. Let $(\M,\Ho,g)$ be an H-type foliation.
Consider the map $Z \to J_Z$. By the universal property of Clifford algebras,  at any $p \in \M$, such map can  uniquely be extended into a bundle algebra homomorphism, still denoted $J$, from the Clifford algebra $\Cl (\V_p)$ to the algebra of horizontal endomorphisms $\End (\Ho_p)$, where the product on $\End (\Ho_p)$ is given by the composition rule of operators. The Clifford multiplication will be denoted by a dot $\cdot$.  In particular, we explicitly note that $J_{1} = \Id_\Ho$ and $J_{v\cdot w} = J_v J_w$.

\begin{lemma}\label{algebra}
Let $(\M,\Ho,g)$ be an H-type foliation. Let $p \in \M$.
\begin{itemize}
\item Consider $\End(\Ho_p)$ as a Lie algebra with Lie brackets being the usual commutator brackets. Define $\fa(p)$ as the Lie subalgebra  generated by maps $J_z$, $z \in \V_p$. Then one of the following holds:
\begin{enumerate}[\rm (i)]
\item $\fa(p) = \{ J_z \, : \, z \in \V_p \}$ and it is isomorphic to either $\R$ or $\fso(3)$;
\item $\fa(p) = \{ J_{z_0}, [J_{z_0}, J_{z_1}] \, : \, z_0, z_1 \in \V_p \}$ and it is isomorphic to $\fso(m+1)$.
\end{enumerate}
\item Assume that  $\{ J_z \, : z \in \V_p \}$ forms a Lie algebra under the commutator brackets. Define
$$\fA(p) = \{  J_z \, : \, z  \in \R  \oplus \V_p \}.$$
Then $\fA(p)$, with product given by the composition of endomorphisms is a field  isomorphic to the field of complex numbers $\Comp$ or the field of quaternions $\Quat$.
\end{itemize}
\end{lemma}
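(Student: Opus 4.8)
The plan is to exploit the Clifford relations together with the H-type identity to pin down the algebraic structure of the endomorphisms $J_z$. Recall that from the universal property, $J$ extends to an algebra homomorphism on $\mathbf{Cl}(\V_p)$, so for $z\in\V_p$ one has $J_z J_z = J_{z\cdot z} = J_{-\langle z,z\rangle} = -\|z\|^2\,\mathbf{Id}_\hor$. In particular, picking an orthonormal basis $z_1,\dots,z_m$ of $\V_p$, the operators $J_i:=J_{z_i}$ satisfy $J_i^2=-\mathbf{Id}$ and $J_iJ_j+J_jJ_i=0$ for $i\neq j$; and more generally for a unit $z$, $J_z$ is a skew-symmetric orthogonal complex structure on $\hor_p$. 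The commutators $[J_i,J_j]=2J_iJ_j$ (for $i\neq j$) are skew-symmetric, and one computes their brackets among themselves and against the $J_i$ using only these anticommutation relations.

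For the first bullet, I would argue as follows. If $m=1$, then $\{J_z : z\in\V_p\}=\{tJ_1 : t\in\R\}$ is already closed under the (trivial) bracket, and since $J_1^2=-\mathbf{Id}\neq 0$ it is $1$-dimensional, hence $\cong\R$. If $m=2$, then $\spn\{J_1,J_2\}$ is not bracket-closed since $[J_1,J_2]=2J_1J_2$ is a new skew complex structure linearly independent from $J_1,J_2$ (it anticommutes with both); adjoining it, one checks $[J_1,J_1J_2]=2J_1J_1J_2=-2J_2$, $[J_2,J_1J_2]=2J_1$, $[J_1,J_2]=2J_1J_2$, so $\spn\{J_1,J_2,J_1J_2\}$ closes and is isomorphic to $\mathfrak{so}(3)$ (the structure constants match those of $\mathfrak{su}(2)$ up to scaling). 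If $m=3$, the computation $J_1J_2 = -\epsilon\,J_3$-type identities need not hold a priori — here is the subtlety — but one still has that $\spn\{J_1,J_2,J_3\}$ together with the three products $\{J_1J_2,J_2J_3,J_3J_1\}$ spans a Lie algebra, and I would show that (i) holds (the three $J_i$ already close, giving $\mathfrak{so}(3)$) precisely when the $J_iJ_j$ are, up to sign, the remaining $J_k$; otherwise one must adjoin them and gets the larger algebra. For general $m\geq 3$, the Lie algebra generated is $\spn\{J_{z}, [J_{z},J_{w}] : z,w\in\V_p\}$ — one verifies this set is bracket-closed by noting $[J_z,[J_w,J_u]]$ re-expands, via the anticommutation relations, into a combination of terms of the form $J_{\cdot}$ and $[J_\cdot,J_\cdot]$ only (no triple products survive, since e.g. $J_iJ_jJ_k$ appears with a coefficient that cancels when symmetrized against the Jacobi identity for distinct indices, and collapses to a single $J$ when two indices coincide) — and this algebra, spanned by $m$ elements $J_{z_i}$ together with the $\binom{m}{2}$ elements $[J_{z_i},J_{z_j}]$, has dimension $m+\binom{m}{2}=\binom{m+1}{2}=\dim\mathfrak{so}(m+1)$, with bracket relations matching $\mathfrak{so}(m+1)$ under the identification $J_{z_i}\leftrightarrow E_{0i}$, $\tfrac12[J_{z_i},J_{z_j}]\leftrightarrow E_{ij}$.

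For the second bullet, assume $\mathfrak{a}(p)=\{J_z : z\in\V_p\}$ is itself a Lie algebra; by the first bullet this forces it to be isomorphic to $\R$ (the $m=1$ case) or to $\mathfrak{so}(3)$ (the $m=2$ case, or $m=3$ with the quaternionic closure). Set $\mathfrak{A}(p)=\{J_z : z\in\R\oplus\V_p\}=\{t\,\mathbf{Id}+J_z : t\in\R,\ z\in\V_p\}$, a real vector space of dimension $2$ (if $m=1$) or $4$ (if $\mathfrak{so}(3)$). It contains $\mathbf{Id}$. It is closed under composition: $(t\,\mathbf{Id}+J_z)(s\,\mathbf{Id}+J_w)=(ts-\langle z,w\rangle)\mathbf{Id}+(tJ_w+sJ_z)+\tfrac12[J_z,J_w]+\tfrac12\{J_z-(\text{sym part})\}$ — more cleanly, $J_zJ_w = -\langle z,w\rangle\mathbf{Id}+\tfrac12[J_z,J_w]$ using $J_zJ_w+J_wJ_z=-2\langle z,w\rangle\mathbf{Id}$, and by hypothesis $\tfrac12[J_z,J_w]\in\mathfrak{a}(p)=\{J_u:u\in\V_p\}$, so the product lies back in $\mathfrak{A}(p)$. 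Associativity and distributivity are inherited from $\mathbf{End}(\hor_p)$. Finally every nonzero element is invertible: $(t\,\mathbf{Id}+J_z)(t\,\mathbf{Id}-J_z)=t^2\mathbf{Id}-J_z^2=(t^2+\|z\|^2)\mathbf{Id}$, which is a positive multiple of $\mathbf{Id}$ whenever $(t,z)\neq(0,0)$, giving the inverse explicitly. Hence $\mathfrak{A}(p)$ is a finite-dimensional associative real division algebra containing $\R$ in its center; by dimension $2$ or $4$ and the above relations it is $\cong\mathbb{C}$ or $\cong\mathbb{H}$ — indeed for $m=1$ it is commutative and $2$-dimensional, hence $\mathbb{C}$; for the $\mathfrak{so}(3)$ case the generators $J_{z_1},J_{z_2},J_{z_3}$ satisfy the quaternion relations (up to sign and scaling fixed by the $\mathfrak{so}(3)$ bracket structure), hence $\mathbb{H}$.

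The main obstacle I anticipate is the bookkeeping in the first bullet: showing rigorously that the Lie algebra generated by $\{J_z\}$ contains no triple (or higher) products — i.e. that $\spn\{J_z,[J_z,J_w]\}$ is genuinely bracket-closed — and then matching the structure constants to $\mathfrak{so}(m+1)$ rather than to some other algebra of the same dimension. Both reduce to a careful but elementary manipulation of the anticommutation relations $J_iJ_j=-J_jJ_i$ ($i\neq j$), $J_i^2=-\mathbf{Id}$; the key identity is that for distinct $i,j,k$, $[J_iJ_j, J_k] = 0$, while $[J_iJ_j, J_jJ_k] = -2J_iJ_k$ and $[J_iJ_j,J_i]=-2J_j$, which are exactly the $\mathfrak{so}(m+1)$ relations after relabelling. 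The separate treatment of the degenerate low-rank cases $m=1,2,3$ — where the generic $\mathfrak{so}(m+1)$ can collapse to $\R$ or $\mathfrak{so}(3)$ because products of the $J_i$ happen to lie back in $\spn\{J_z\}$ — is where case (i) of the statement comes from, and should be dispatched by the direct computations sketched above.
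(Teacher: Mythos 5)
Your computations with the anticommutation relations are essentially sound (modulo a sign: $[J_iJ_j,J_i]=+2J_j$, not $-2J_j$), and they do show that $\mathrm{span}\{J_{z_i},\,J_{z_i}J_{z_j}\}$ is bracket-closed with structure constants matching those of $\mathfrak{so}(m+1)$. But this only produces a \emph{surjective} Lie algebra homomorphism from $\mathfrak{so}(m+1)$ onto $\mathfrak{a}(p)$; your conclusion that $\dim\mathfrak{a}(p)=m+\binom{m}{2}$ presupposes that the $m+\binom{m}{2}$ spanning elements are linearly independent in $\mathrm{End}(\Ho_p)$, and that is exactly what can fail: in the quaternionic case $m=3$ one has $J_1J_2=\pm J_3$, the products collapse into $\mathrm{span}\{J_1,J_2,J_3\}$, and $\mathfrak{a}(p)\cong\mathfrak{so}(3)$ rather than $\mathfrak{so}(4)$. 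You flag this subtlety for $m=3$ but never resolve it, and for general $m$ you simply assert the dimension. The missing idea --- and the heart of the paper's proof --- is that $\V_p\oplus\mathbf{Cl}_2(\V_p)$ with the commutator bracket is isomorphic to $\mathfrak{so}(m+1)$ and $Z\mapsto J_Z$ is a surjective Lie algebra homomorphism onto $\mathfrak{a}(p)$ whose kernel is an ideal containing no nonzero $J_z$ (since $J_z^2=-\|z\|^2\mathbf{Id}_\hor$). For $m\neq 3$ the algebra $\mathfrak{so}(m+1)$ is simple, so the kernel is trivial and the isomorphism (hence the linear independence you assumed) follows; for $m=3$ the splitting $\mathfrak{so}(4)\cong\mathfrak{so}(3)\oplus\mathfrak{so}(3)$ shows the kernel is either trivial or one of the two factors, which gives precisely the dichotomy between cases (i) and (ii). Without an argument of this kind (simplicity, or for $m\ge 4$ a trace computation such as $\tr(J_kJ_iJ_j)=\tr(J_sJ_kJ_iJ_jJ_s^{-1})=-\tr(J_kJ_iJ_j)=0$), your case analysis is incomplete and would wrongly conclude $\mathfrak{a}(p)\cong\mathfrak{so}(4)$ in all $m=3$ cases.

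Your treatment of the second bullet is fine in outline --- closure under composition via $J_zJ_w=-\langle z,w\rangle\mathbf{Id}_\hor+\tfrac12[J_z,J_w]$ together with the hypothesis, and invertibility via $(t\,\mathbf{Id}_\hor+J_z)(t\,\mathbf{Id}_\hor-J_z)=(t^2+\|z\|^2)\mathbf{Id}_\hor$ --- but the parenthetical attribution of the $\mathfrak{so}(3)$ case to ``$m=2$'' is wrong: for $m=2$ the set $\{J_z : z\in\V_p\}$ is $2$-dimensional and, as you yourself show, not bracket-closed, so the hypothesis of the second bullet forces $m=1$ or $m=3$. (The paper deduces $m=3$ from the first bullet by comparing dimensions: $m=\dim\mathfrak{a}(p)$ must equal $1$, $3$, or $\binom{m+1}{2}$.) With that corrected, your derivation of the quaternion relations and the identification with $\mathbb{C}$ or $\mathbb{H}$ goes through.
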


\begin{proof}

If $m = 1$, then for every $p \in \M$, $\fa(p)$ is a Lie algebra isomorphic to $\R$ and $\fA(p)$ a field isomorphic to $\Comp$, so we assume that $m \ge 2$.  If we endow the vector space $\V_p \oplus \Cl_2 (\V_p )$ with the Lie bracket $Z_1\cdot Z_2 -Z_2 \cdot Z_1$,
then $\V_p \oplus \Cl_2 (\V_p )$ is a Lie algebra isomorphic to $\fso(m+1)$. The map $Z \mapsto J_Z$ is a surjective Lie algebra homomorphism between $\V_p \oplus \Cl_2 (\V_p )$ and  the Lie subalgebra $\fa(p)$ of $\End( \Ho_p)$. If $m\neq 3$, then the Lie algebra $\fso(m+1)$ is simple, so we actually have a Lie algebra isomorphism. Therefore $\fa(p)$ is isomorphic to $\fso(m+1)$. If $m = 3$, then $\fso(4)$ is isomorphic to $\fso(3) \oplus \fso(3)$. So the surjective Lie algebra homomorphism   $\V_p \oplus \Cl_2 (\V_p )\to \fa(p)$ is either an isomorphism, in which case, we conclude as for $m \neq 3$, or $\fa(p)$ is isomorphic to $\fso(3)$. 

We now prove the second part of the statement. If the maps $J_z$ form a Lie algebra and $m\ge 2$, then from the previous argument, we have $m=3$. Let then $z_1,z_2 \in \V_p$ be such that $\| z_1\|=\|z_2\|=1$ and $z_1 \perp z_2$. Denote $z_3$ as the element such that
$
\frac{1}{2} [J_{z_1}, J_{z_2} ] = J_{z_3}.
$
It is easily seen that due to the properties of $J$, the triple $z_1,z_2,z_3$ is an orthonormal basis for $\V_p$ such that $J_{z_1}^2=J_{z_2}^2=J_{z_3}^2=J_{z_1}J_{z_2}J_{z_3}=-\Id_\Ho$. Thus $\fA(p)$ is isomorphic to $\Quat$.
\end{proof}

\begin{lemma}\label{isomorphic algebra}
Let $(\M,\Ho,g)$ be an H-type foliation with horizontally parallel torsion. Then for any $p,q \in \M$, $\fa(p) $ is isomorphic to $\fa(q) $.
\end{lemma}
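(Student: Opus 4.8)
The plan is to show that the isomorphism type of $\mathfrak a(p)$ is a locally constant function of $p$, which then forces it to be globally constant because $\M$ is connected. By Lemma \ref{algebra}, the only possible isomorphism types of $\mathfrak a(p)$ are $\mathbb{R}$, $\mathfrak{so}(3)$, and $\mathfrak{so}(m+1)$ (with $\mathfrak{so}(3)$ occurring only when $m=1$ or $m=3$, and $\mathfrak{so}(4)\cong\mathfrak{so}(3)\oplus\mathfrak{so}(3)$ only when $m=3$). So the first step is to reduce to the case $m=3$: if $m=1$ then $\mathfrak a(p)\cong\mathbb{R}$ for all $p$ and there is nothing to prove, and if $m\ge 2$, $m\neq 3$, then Lemma \ref{algebra}(ii) shows $\mathfrak a(p)\cong\mathfrak{so}(m+1)$ for every $p$, again uniformly. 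Thus the only content is the dichotomy at $m=3$: we must show that the set $U=\{p\in\M : \mathfrak a(p)\cong\mathfrak{so}(3)\}$ is both open and closed.

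The key step is a parallel-transport argument using $\nabla_\hor T = 0$. Recall that $\mathfrak a(p)\cong\mathfrak{so}(3)$ precisely when $\{J_z : z\in\V_p\}$ is already closed under the commutator bracket, i.e. when $[J_{z_1},J_{z_2}]\in\{J_z : z\in\V_p\}$ for all $z_1,z_2\in\V_p$; equivalently (using $J_{z_1}J_{z_2}+J_{z_2}J_{z_1}=-2\langle z_1,z_2\rangle\mathbf{Id}$ from the H-type/Clifford relations) when, for an orthonormal frame $z_1,z_2,z_3$ of $\V_p$, the endomorphism $J_{z_1}J_{z_2}$ equals $\pm J_{z_3}$ rather than being linearly independent from $J_{z_1},J_{z_2},J_{z_3},\mathbf{Id}$ in $\mathrm{End}(\hor_p)$. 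I would phrase this as a tensorial condition: consider the $\hor$-valued or $\mathrm{End}(\hor)$-valued tensor $P$ defined, on a local $\nabla$-parallel orthonormal vertical frame $Z_1,Z_2,Z_3$, by comparing $\tfrac12[J_{Z_1},J_{Z_2}]$ with $J_{Z_3}$ (more invariantly, decompose $\mathbf{Cl}(\V_p)$ and track the image of $\Lambda^2\V_p$ under $J$). Now fix $p$ and a smooth curve $\gamma$ from $p$ to $q$, and $\nabla$-parallel transport along $\gamma$. Since $\hor$ and $\V$ are $\nabla$-parallel and $\nabla$ is a metric connection, parallel transport gives linear isometries $\V_p\to\V_{\gamma(t)}$ and $\hor_p\to\hor_{\gamma(t)}$, hence a compatible map $\mathrm{End}(\hor_p)\to\mathrm{End}(\hor_{\gamma(t)})$ that is an algebra isomorphism for the composition product. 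The crucial point is that this transport intertwines the $J$ maps: because $\nabla T=0$ along horizontal directions and, more to the point, along \emph{any} direction once we know (Lemma \ref{skew J} together with the structure of $\mathfrak{so}(4)$) enough about $\nabla_\V J$ — here I would instead argue directly that $J$ itself, viewed as a section of $\V^*\otimes\mathrm{End}(\hor)$, satisfies a first-order ODE along $\gamma$ whose solution is parallel transport conjugation, using that $\nabla_X J = -$ something expressible via $T$ and hence controlled. If the transport of $J$ is conjugation by an algebra isomorphism, then it carries $\{J_z:z\in\V_p\}$ onto $\{J_z:z\in\V_q\}$ and the Lie-bracket-closure property is preserved, so $\mathfrak a(p)\cong\mathfrak a(q)$.

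The main obstacle is exactly that last intertwining claim: $\nabla_\hor T=0$ controls $\nabla J$ only in horizontal directions, and the Lemma wants an isomorphism between \emph{arbitrary} points, so one needs to handle transport in vertical directions too. The fix is to avoid claiming $\nabla J = 0$ and instead prove the weaker statement that suffices: the map $p\mapsto\dim_{\mathbb{R}}\mathfrak a(p)$ (equivalently, the rank of the linear map $\Lambda^2\V_p\to\mathrm{End}(\hor_p)/\spn\{J_z,\mathbf{Id}\}$) is lower semicontinuous by general position and is locally constant because the relevant tensor is built from $T$ and $\nabla$-parallel frames — so its rank cannot jump along a $\nabla$-parallel framing. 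Concretely: work in a local orthonormal $\nabla$-parallel vertical frame $Z_1,Z_2,Z_3$ (which exists since $\V$ is $\nabla$-parallel and the leaves are totally geodesic, so the vertical part of $\nabla$ is flat-framed locally along leaves, and horizontally we use $\nabla_\hor T=0$); in such a frame the structure constants relating $J_{Z_i}J_{Z_j}$ to $J_{Z_k}$ are given by inner products of $T$-values, which are $\nabla_\hor$-parallel, and one checks the vertical derivatives vanish using Lemma \ref{skew J} and the Clifford relations. Hence the "is $\mathfrak{so}(3)$ or $\mathfrak{so}(4)$" alternative is locally constant, and connectedness of $\M$ finishes the proof. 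I expect the bookkeeping in the $m=3$ frame computation — showing the structure constants are genuinely constant and not merely parallel in one set of directions — to be the only real work; everything else is the standard "locally constant on a connected space" packaging.
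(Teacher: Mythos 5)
Your reduction to the $m=3$ dichotomy is fine, and you correctly identify parallel transport as the mechanism, but there is a genuine gap at exactly the point you flag as "the main obstacle": you take $\gamma$ to be an \emph{arbitrary} smooth curve from $p$ to $q$ and then have to control $\nabla J$ in vertical directions, which the hypothesis $\nabla_\hor T=0$ does not give you. Your proposed fix — that the structure constants relating $J_{Z_i}J_{Z_j}$ to $J_{Z_k}$ have vanishing vertical derivatives "using Lemma \ref{skew J} and the Clifford relations" — is not established and does not follow from horizontally parallel torsion alone: in general $(\nabla_{Z_1}J)_{Z_2}\neq 0$ (indeed for a parallel horizontal Clifford structure it equals $J_{\Psi(Z_1,Z_2)}$ with $\Psi\neq 0$ when $\kappa\neq 0$), so nothing forces the relevant quantities to be constant along the leaves without further argument.

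The missing idea that makes the proof one line is horizontal connectivity: since the H-type condition forces $\hor$ to be bracket generating, the Chow--Rashevskii theorem provides a \emph{horizontal} curve $\gamma$ from $p$ to $q$. Along such a curve only $\nabla_\hor J=0$ is needed: $\nabla$-parallel transport is an isometry $\V_p\to\V_q$ and $\hor_p\to\hor_q$ (as $\hor$, $\V$ are $\nabla$-parallel and $\nabla$ is metric), it intertwines $z\mapsto J_z$ because $J$ is horizontally parallel, hence carries $\{J_z: z\in\V_p\}$ onto $\{J_z: z\in\V_q\}$ and conjugates the Lie subalgebras they generate, giving $\mathfrak a(p)\cong\mathfrak a(q)$ directly and uniformly in $m$, with no case analysis and no need to discuss vertical transport or local constancy at all.
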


\begin{proof}
Let $\gamma$ be an horizontal curve  (i.e. $\gamma' \in \Ho_\gamma$) joining $p$ to $q$. Such a curve always exists from the Chow-Rashevskii theorem since $\Ho$ is bracket-generating from the H-type condition. The $\nabla$-parallel transport along $\gamma$ induces a Lie algebra isomorphism between $\fa(p) $ and $\fa(q) $, because $\nabla_\Ho J=0$. Let $P_{\gamma} : T_p \M \to T_q \M$ be the $\nabla$-parallel transport along $\gamma$ as above. Thanks to the properties of $\nabla$, $P_{\gamma}$ maps $\Ho_p$ to $\Ho_q$ and $\V_q$ to $\V_q$. Then it induces a Lie algebra isomorphism $\Theta: \End(\Ho_p) \to \End(\Ho_q)$ given by
\[
\Theta(A) = P_\gamma \circ A \circ P_{\gamma}^{-1}, \qquad \forall A\in \End(\Ho_p).
\]
Furthermore, if $Z\in \s\V$ and $X\in \s\Ho$ are parallel along $\gamma$, then also $J_Z X$ is parallel along $\gamma$ since $\nabla_\Ho J=0$. It follows that for any $z\in \V_p$ we have $\Theta(J_z) = J_{P_{\gamma}z}$, so that $\Theta$ maps generators of $\fa(p)$ to generators of $\fa(q)$, concluding the proof.
\end{proof}

\begin{definition}\label{quaternionic}
Let $(\M,\Ho,g)$ be an H-type foliation.  We say that $(\M,\Ho,g)$ is a quaternionic type foliation if for  every $p \in \M$, $\fA(p) \simeq \Quat$.
\end{definition}
In particular, the leaves of the foliations have dimension $m =3$. We also note from Lemma \ref{isomorphic algebra}, that if $(\M,\Ho,g)$ is an H-type foliation with horizontally parallel torsion, then for it to be quaternionic, it is enough that $\fA(p) \simeq \Quat$ at some point $p \in \M$.

\begin{Example}
The examples  given in Table \ref{Table 3} under the category Quaternionic Type are examples of such structures.
\end{Example}

\begin{Remark}
While the quaternions yield rich classes of structures, octonions do not. Indeed, we will see that for $m=7$, $n=8$, the octonionic Heisenberg group, the octonionic Hopf fibration and the octonionic anti de-Sitter fibration are the only examples of simply connected H-type submersions that carry a parallel horizontal Clifford structure, see Section 3.3 and Tables \ref{Table 1}, \ref{Table 2}. However, those three examples are still algebraically remarkable, because even though $J_{\R  \oplus \V} $ is not an algebra, one has for every $X \in \s{\Ho}$, $J_{\R  \oplus \V} J_{\R  \oplus \V} X= J_{\R  \oplus \V} X$. This is the so-called $J^2$ condition in Clifford modules, see \cite{Calin09} and \cite{Cowling91}.
\end{Remark}

\subsection{H-type foliations are Yang-Mills}

Although H-type foliations are not necessarily horizontally parallel, they are always Yang-Mills.  The importance of this result will be shown in Section \ref{section CD}.

\begin{theorem} \label{th:AllYM}
Let $(\M , \Ho, g)$ be an H-type foliation. Then it satisfies the Yang-Mills condition.
\end{theorem}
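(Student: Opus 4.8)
The plan is to show that the horizontal divergence of the torsion vanishes, i.e. that for every $X \in \Gamma(\Ho)$ one has $\sum_{i} (\nabla_{X_i} T)(X_i, X) = 0$ where $(X_i)$ is a local horizontal orthonormal frame. Equivalently, writing the torsion in terms of the $J$-maps via \eqref{Jmap}, it suffices to prove that for every vertical vector $Z$ and every $X \in \Gamma(\Ho)$,
\begin{equation}
\sum_i \langle (\nabla_{X_i} J)_Z X_i, X \rangle = 0 .
\end{equation}
The idea is to exploit the algebraic rigidity coming from the H-type identity \eqref{jhgfr}: differentiating $\langle J_Z Y, J_Z Y\rangle = \|Z\|^2 \|Y\|^2$ along a horizontal direction gives, since $\nabla$ is metric and $\nabla_\Ho$ preserves $\ver$, a relation forcing $(\nabla_X J)_Z$ to anticommute (in the appropriate sense) with $J_Z$, which severely constrains its trace-like contractions. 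More precisely, for fixed unit $Z \in \ver$ the operator $J_Z$ is a $g_\Ho$-orthogonal skew-symmetric complex structure on $\Ho$, and differentiating the H-type relation shows $(\nabla_X J)_Z$ is symmetric and anticommutes with $J_Z$; any such operator is traceless, but more is needed — one needs the full contraction against $\Ho$ to vanish, not just the naive trace.

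The key steps, in order, are: (1) fix $p \in \M$, a unit $Z \in \ver_p$, extend $Z$ by $\nabla$-parallel transport along horizontal curves so that $\nabla_\Ho Z = 0$ at $p$ (legitimate since $\ver$ is $\nabla$-parallel), and choose a horizontal orthonormal frame $(X_i)$ that is $\nabla$-parallel at $p$; (2) differentiate $\langle J_Z X_i, J_Z X_j\rangle = \delta_{ij}$ in a horizontal direction $X = X_k$ to obtain the pointwise identity $\langle (\nabla_X J)_Z X_i, J_Z X_j\rangle + \langle J_Z X_i, (\nabla_X J)_Z X_j\rangle = 0$, i.e. $J_Z (\nabla_X J)_Z$ is skew-symmetric, hence $(\nabla_X J)_Z$ is symmetric (using that $J_Z^2 = -\Id_\Ho$ and $J_Z$ skew); (3) use Lemma \ref{skew J}, i.e. $(\nabla_Z J)_W = -(\nabla_W J)_Z$ — wait, that requires $\nabla_\Ho T = 0$, which we do not have here, so instead I would work directly: use the first Bianchi identity for $R$ and the fact that, from the explicit form of the Bott connection, $R(X,Y)Z$ for $X, Y \in \Ho$, $Z \in \ver$ is computable, together with the formula $\langle R(X,Y)Z, W\rangle = \langle (\nabla_Z T)(X,Y) , W\rangle + (\text{horizontal-curvature terms that vanish by symmetry})$ analogous to the argument in Lemma \ref{prop:TGCurvature} but without assuming $\nabla_\Ho T=0$; (4) contract: $\sum_i \langle (\nabla_{X_i} J)_Z X_i, X\rangle$ is, by the symmetry from step (2) and the H-type orthogonality, re-expressible as $\sum_i \langle (\nabla_{X_i} J)_Z (J_Z X_i), J_Z X\rangle$ up to sign; since $\{J_Z X_i\}$ is again an orthonormal frame of $\Ho$, a change-of-frame/averaging argument shows the sum equals its own negative, hence vanishes.

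The main obstacle I anticipate is step (4): turning the pointwise symmetry and anticommutation properties of $(\nabla_X J)_Z$ into the vanishing of the specific contraction $\sum_i (\nabla_{X_i} T)(X_i, X)$. The naive trace of a symmetric operator anticommuting with $J_Z$ does vanish, but the divergence mixes the derivative index with the torsion slot, so one really needs a Bianchi-type symmetry to swap $\sum_i \langle (\nabla_{X_i} T)(X_i, X), Z\rangle$ into something manifestly antisymmetric. I expect the cleanest route is to prove the auxiliary identity $\sum_i (\nabla_{X_i} T)(X_i, X) = -\sum_i J_{T(X_i, X)} X_i$ (or a similar closed form) using the first Bianchi identity together with \eqref{Jmap}, and then to evaluate the right-hand side using the H-type relation $\sum_i \langle J_{T(X_i,X)} X_i, Y\rangle = \sum_i \langle T(X_i, X), T(X_i, Y)\rangle$ and the resulting symmetry in $X, Y$ to force it to be zero. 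The representation-theoretic input — that $\Ho_p$ is a $\mathbf{Cl}(\ver_p)$-module with the $J_z$ orthogonal — is what makes the final cancellation work, and checking it may require splitting into the cases $m$ even/odd or invoking the standard structure of Clifford modules as flagged after Table \ref{Table 3}.
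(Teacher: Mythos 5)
Your overall strategy --- differentiate the H-type identity, invoke a first-Bianchi-type symmetry, and exploit that $\{J_Z X_i\}$ is again an orthonormal frame --- is exactly the route the paper takes, but your proposal does not close the one step you yourself identify as the obstacle, and two of your concrete claims are wrong. First, in step (2): from the skew-symmetry of $J_Z(\nabla_XJ)_Z$ together with the anticommutation $AJ_Z=-J_ZA$ for $A=(\nabla_XJ)_Z$, one deduces $A^*J_Z^*=-J_ZA=AJ_Z$, hence $A^*=-A$; so $(\nabla_XJ)_Z$ is \emph{skew}-symmetric, not symmetric. Second, and more seriously, step (4) as written does not work: the expression $\sum_i\langle(\nabla_{X_i}J)_Z(J_ZX_i),J_ZX\rangle$ is \emph{not} the trace of $\nabla T$ computed in the frame $\{J_ZX_i\}$. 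A genuine change of frame replaces the derivative index \emph{and} the contracted slot simultaneously, yielding $\sum_i\langle(\nabla_{J_ZX_i}J)_Z(J_ZX_i),X\rangle$, and relating this back to $\sum_i\langle(\nabla_{X_i}J)_ZX_i,X\rangle$ requires moving the derivative from $J_ZX_i$ to $X_i$. That swap is exactly the identity $(\nabla_{J_ZX}J)_ZJ_ZX=-\|Z\|^2(\nabla_XJ)_ZX$, which the paper extracts from the cyclic Bianchi sum $\langle\circlearrowright(\nabla_{J_ZX}T)(X,Y),Z\rangle=0$ (valid for any H-type foliation, with no parallelism hypothesis, since $\circlearrowright R(U,V)W$ is horizontal for horizontal $U,V,W$ and the terms $T(T(\cdot,\cdot),\cdot)$ vanish because $T(\V,\Ho)=0$). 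You gesture at needing such a Bianchi input but never derive it.

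Moreover, the fallback identity you propose, $\sum_i(\nabla_{X_i}T)(X_i,X)=-\sum_iJ_{T(X_i,X)}X_i$, is false: the left-hand side is vertical while the right-hand side is horizontal, and the H-type condition gives $\sum_iJ_{T(X_i,X)}X_i=mX\neq0$. So the proposal correctly locates the difficulty but supplies neither of the ingredients that resolve it. To complete the argument you should prove the displayed identity above, using the Bianchi cyclic sum together with the skew-symmetry of $(\nabla_YJ)_W$ and the anticommutation $J_W(\nabla_XJ)_WY=-(\nabla_XJ)_WJ_WY$ obtained by differentiating $J_W^2=-\|W\|^2\mathbf{Id}_\hor$; once that is in place, the change-of-frame computation finishes the proof in one line.
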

To prove this result, we will use the following Lemma.
\begin{lemma} We have the following relations for the covariant derivatives of~$T$ and~$J$.
\begin{enumerate}[\rm(a)]
\item If $X,Y,Z \in \s\Ho$, then $\circlearrowright (\nabla_X T)(Y,Z) =0$.
\end{enumerate}
Furthermore, for arbitrary vector fields $X,Y\in \s{T\M}$ and a vertical vector field $W\in \s\V$, the following relations hold.
\begin{enumerate}[\rm (a)]
\setcounter{enumi}{1}
\item $J_W (\nabla_X J)_W Y= - (\nabla_X J)_W J_WY$,
\item $\langle X, (\nabla_Y J)_W X \rangle =0$,
\item $\langle J_W X, (\nabla_Y J)_W X \rangle =0$,
\item $(\nabla_{J_W X} J)_W X = (\nabla_{X} J)_W J_W X = - J_W (\nabla_{X} J)_W X$. In particular, $$(\nabla_{J_W X} J)_W J_W X = - \| W\|^2 (\nabla_X J)_W X.$$
\end{enumerate}
\end{lemma}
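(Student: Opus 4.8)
The plan is to prove the five identities (a)--(e) more or less in order, using the H-type condition $\langle J_WX, J_WY\rangle = \|W\|^2\langle X,Y\rangle$, the defining relation \eqref{Jmap} between $J$ and $T$, the skew-symmetry of each $J_W$, and Lemma \ref{skew J} where a vertical-symmetrization is needed. Item (a) is immediate: it is precisely the first Bianchi identity for $\nabla$ applied to three horizontal vectors, combined with the torsion constraints $T(\hor,\ver)=0$, $T(\ver,\ver)=0$, so that $\circlearrowright R(X,Y)Z_\hor$ collapses to $\circlearrowright (\nabla_{Z} T)(X,Y)$-type terms that are all horizontal; more directly, differentiating the purely algebraic identity $\circlearrowright T(X,Y)=0$... no, rather: this is the statement that the torsion, restricted horizontally, has cyclic sum zero, which is the Bianchi identity $\circlearrowright R(X,Y)W = \circlearrowright (\nabla_X T)(Y,Z) + \circlearrowright T(T(X,Y),Z)$ with the right-hand side vanishing since $R(\hor,\hor)\hor$ has no totally-horizontal Bianchi obstruction and the $T(T(\cdot,\cdot),\cdot)$ terms vanish by $T(\ver,\hor)=0$.

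For (b), I would differentiate the H-type relation in the form $J_W J_W = -\|W\|^2 \mathbf{Id}_\hor$ (valid for $W\in\ver$, which follows from polarizing \eqref{jhgfr} and skew-symmetry of $J_W$): applying $\nabla_X$ with $W$ held as a vector and then contracting against the fact that $\|W\|^2$ is differentiated by the Levi-Civita/Bott connection consistently, one gets $(\nabla_X J)_W J_W + J_W (\nabla_X J)_W = -(\nabla_X\|W\|^2)\mathbf{Id}$, and evaluating on the specific combination in the statement (where the $\|W\|^2$ term is arranged to cancel, or one works at a point where $\nabla_X W=0$) yields $J_W(\nabla_X J)_W Y = -(\nabla_X J)_W J_W Y$. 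Then (c) follows from skew-symmetry of $(\nabla_Y J)_W$ (the derivative of a family of skew operators is skew), giving $\langle X,(\nabla_Y J)_W X\rangle = 0$ directly. Identity (d) is the analogue for $J_WX$ in place of $X$: write $\langle J_WX,(\nabla_Y J)_W J_WX\rangle = 0$ by (c), then use (b) to rewrite $(\nabla_Y J)_W J_WX = -J_W(\nabla_Y J)_W X$ and skew-symmetry of $J_W$ to convert this into $\langle J_WX,(\nabla_Y J)_W X\rangle$ up to sign — here one must be slightly careful to land on the claimed vector $(\nabla_Y J)_W X$ and not a variant, so I would track the signs via $\langle J_WX, J_WU\rangle = \|W\|^2\langle X,U\rangle$.

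Identity (e) is the main obstacle and the payoff. The first equality $(\nabla_{J_WX}J)_W X = (\nabla_X J)_W J_WX$ should come from Lemma \ref{skew J}, i.e. $(\nabla_U J)_V = -(\nabla_V J)_U$ applied with $U = J_WX$ or $U=X$ and $V=W$... no — Lemma \ref{skew J} swaps the two subscripts of $\nabla$ and $J$, so I would instead exploit the symmetry of the bilinear map $(U,U')\mapsto \langle (\nabla_U J)_W U', \cdot\rangle$ in its two horizontal slots coming from \eqref{Jmap}: since $g_\hor(J_WU',U'') = g_\ver(W, T(U',U''))$ and $T$ is antisymmetric, differentiating gives that $U'\mapsto (\nabla_U J)_W U'$ interacts with the antisymmetry of $T$, and combined with (b) one extracts $(\nabla_{J_WX}J)_W X = -J_W(\nabla_X J)_W X$ and $(\nabla_X J)_W J_WX = -J_W(\nabla_X J)_W X$, proving all three expressions equal. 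Finally the "In particular" clause: substitute $X\mapsto J_WX$ into the established identity $(\nabla_{J_WX}J)_W X = -J_W(\nabla_X J)_W X$, use $J_W(J_WX) = -\|W\|^2 X$ and the (bi)linearity of everything in the horizontal slot — the $\|W\|^2$ factors out — to get $(\nabla_{J_WX}J)_W J_WX = -\|W\|^2(\nabla_X J)_W X$. I expect the sign bookkeeping in (e), and ensuring each manipulation of $\|W\|^2$ is legitimate (best handled by fixing $W$ with $\nabla W = 0$ at the point, or normalizing $\|W\|=1$ and using tensoriality in $W$), to be where the real care is needed; everything else is formal.
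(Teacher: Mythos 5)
Parts (a)--(c) of your sketch follow the paper's route: (a) is the first Bianchi identity for a horizontal triple paired against a vertical vector (the cyclic sum of $R(X,Y)Z$ is horizontal because $\nabla$ preserves $\Ho$, while $\circlearrowright(\nabla_XT)(Y,Z)$ is vertical, which is the clean way to phrase your "no totally-horizontal obstruction"); (b) is differentiation of $J_W^2=-\|W\|^2\mathbf{Id}_\hor$; (c) is skew-symmetry of $(\nabla_YJ)_W$. For (d), be careful: starting from $\langle J_WX,(\nabla_YJ)_WJ_WX\rangle=0$ and applying (b) just returns you to (c). The correct chain is $\langle J_WX,(\nabla_YJ)_WX\rangle=-\langle X,J_W(\nabla_YJ)_WX\rangle=\langle X,(\nabla_YJ)_WJ_WX\rangle=-\langle J_WX,(\nabla_YJ)_WX\rangle$, i.e.\ (b) together with skew-symmetry of both $J_W$ and $(\nabla_YJ)_W$.

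The genuine gap is the first equality of (e). You correctly note that Lemma \ref{skew J} does not apply (it exchanges two \emph{vertical} subscripts), but the replacement you propose --- a "symmetry of $(U,U')\mapsto(\nabla_UJ)_WU'$ in its two horizontal slots coming from the antisymmetry of $T$" --- does not exist: the antisymmetry of $T$ only encodes the skew-symmetry of $(\nabla_UJ)_W$ as an operator on $\Ho$, which is already item (c), and it gives no relation between the differentiation direction $U$ and the argument $U'$. The identity $(\nabla_{J_WX}J)_WX=(\nabla_XJ)_WJ_WX$ is precisely where part (a) enters: apply $\circlearrowright(\nabla_{J_WX}T)(X,Y)=0$ to the horizontal triple $(J_WX,X,Y)$ and pair with $W$. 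Via \eqref{Jmap}, the two terms $\langle(\nabla_{J_WX}T)(X,Y)-(\nabla_XT)(J_WX,Y),W\rangle$ become $\langle(\nabla_{J_WX}J)_WX-(\nabla_XJ)_WJ_WX,Y\rangle$, while the remaining cyclic term $\langle(\nabla_YT)(J_WX,X),W\rangle=-\langle(\nabla_YJ)_WX,J_WX\rangle$ vanishes by (d); since $Y$ is arbitrary this gives the claim. The second equality of (e) is then just (b), and your substitution argument for the "in particular" clause does go through once the first equality is in place.
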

\begin{proof}
The first relation is a result of the Bianchi identity. For any vertical vector field $W \in \s\V$, we have $\langle \circlearrowright (\nabla_X T)(Y,Z), W \rangle = \langle \circlearrowright R(X,Y) Z , W \rangle = 0$. Taking the covariant derivative of the H-type condition
\begin{equation}
J_W^2 = - \|W\|^2 \Id_\Ho,
\end{equation}
proves (b). Property (c) follows from the skew-symmetry of $(\nabla_YJ)_W$. Property (d) follows from (b). Finally, for Property (e), we note that
\begin{align}
0 & = \langle \circlearrowright (\nabla_{J_WX } T)(X, Y), W \rangle \\
& = \langle (\nabla_{J_WX } T)(X, Y) - (\nabla_X T)(J_W X, Y), W \rangle + \langle (\nabla_{Y } T)(J_WX , X), W \rangle \\
& = \langle (\nabla_{J_WX } J)_W X  - (\nabla_X J)_W J_W X, Y \rangle - \langle  (\nabla_{Y } J)_W X , J_WX  \rangle \\
& = \langle (\nabla_{J_WX } J)_W X  - (\nabla_X J)_W J_W X, Y \rangle,
\end{align}
which completes the proof, as $Y$ was arbitrary. We note also from (b) that
\begin{equation*}
(\nabla_{J_W X} J)_W J_W X = - J_W (\nabla_{J_W X} J)_W X = J_W^2 (\nabla_X J)_W X = - \| W \|^2 (\nabla_X J)_W X.\qedhere
\end{equation*}
\end{proof}

\begin{proof}[Proof of Theorem~\ref{th:AllYM}]
Let $p \in \M$ be arbitrary. Note that if $Z\in \V_p$ is a unit vector and if $X_1, \dots, X_n$ is an orthonormal basis of $\Ho_p$, then so is $J_{Z} X_1, \dots, J_{Z} X_n$. Hence for any horizontal $Y$, we have
\begin{align*}
 \langle \tr_{\Ho} (\nabla_\times T)(\times, Y), Z \rangle = \sum_{i=1}^n \langle (\nabla_{J_Z X_i}J)_Z J_Z X_i , Y \rangle  = - \sum_{i=1}^n \langle (\nabla_{X_i}J)_Z X_i , Y \rangle = - \langle \tr_{\Ho} (\nabla_\times T)(\times, Y), Z \rangle.
\end{align*}
Hence $\tr_\Ho (\nabla_\times T)(\times, \, \cdot \,) = 0$ and the foliation is Yang-Mills.
\end{proof}

\subsection{Curvature dimension inequalities on H-type foliations}\label{section CD}

In this  subsection we show that on H-type foliations, the generalized curvature dimension condition introduced in \cite{BG17} is only controlled by the horizontal Ricci curvature and deduce several corollaries. Let $(\M,\Ho,g)$ be an H-type foliation. We assume that the metric $g$ is complete.
The Riemannian gradient will be denoted $\nabla$ and we write the horizontal gradient as $\nabla_\Ho$, which is the projection of~$\nabla$ onto~$\Ho$.  Likewise, $\nabla_\V$ will denote the vertical gradient.  Let $\mu_g$ denote the Riemannian volume measure. The  horizontal Laplacian $\Delta_\Ho$ of the foliation is the generator of the symmetric closable bilinear form in $L^2(\M,\mu_g)$:
\begin{equation}
\mathcal{E}_{\Ho} (u,v) 
=\int_\M \langle \nabla_\Ho u , \nabla_\Ho v \rangle\,d\mu_g, 
\quad u,v \in C_0^\infty(\M).
\end{equation}

We adopt the convention that $\Dh$ is a negative operator. The H-type hypothesis implies  that $\Ho$ is bracket-generating, therefore it follows from H\"ormander's theorem that the horizontal Laplacian $\Delta_{\Ho}$ is locally subelliptic. The completeness assumption on the Riemannian metric $g$ implies that $\Delta_{\Ho}$ is essentially self-adjoint on the space of smooth and compactly supported functions (see for instance \cite{Strichartz} or Proposition 5.1 in  \cite{BaudoinEMS2014}).

\begin{Remark}
For H-type foliations, one can easily check that the Riemannian measure $\mu_g$ is proportional to the intrinsic Popp's measure of the sub-Riemannian structure obtained by the restriction $g|_{\Ho}$. Therefore, the operator $\Delta_{\Ho}$ defined above coincides with the intrinsic sub-Laplacian (see \cite{BR-Popp} and \cite[Section 10.6]{Montgomery}). As such, we will indifferently refer to $\Dh$ as the horizontal Laplacian or the sub-Laplacian.
\end{Remark}

We denote by $\ri_\Ho$ the horizontal Ricci curvature of $(\M, \Ho, g)$ i.e. the horizontal trace of the Riemann curvature tensor of the Bott connection.

\begin{proposition}\label{generalized CD}
Let $(\M, \Ho, g)$ be an  H-type foliation  such that $\ri_\Ho \ge K g_\Ho$ with $K \in \R$. Then $(\M, \Ho, g)$ satisfies the generalized curvature dimension inequality CD$\left( K, \frac{n}{4}, m , n \right)$, i.e.\ for every $f \in C^\infty(\M)$ and $\ve >0$, one has the following Bochner's type inequality:
\begin{multline*}
 \frac{1}{2} \left( \Dh \| \nabla_{\Ho} f\|^2 -2 \langle \nabla_{\Ho} f, \nabla_{\Ho} \Dh  f \rangle \right)+ \frac{\ve}{2} \left( \Dh \| \nabla_{\V} f\|^2 -2 \langle \nabla_{\V} f, \nabla_{\V} \Dh  f \rangle \right) \\
 \ge   \frac{1}{n} (\Dh f)^2 +\left( K-\frac{m}{\ve}\right) \| \nabla_{\Ho} f\|^2+\frac{n}{4} \| \nabla_{\V} f\|^2.
\end{multline*}
\end{proposition}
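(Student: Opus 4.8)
Write $\Gamma(f) = \|\nabla_\Ho f\|^2$ and $\Gamma^\V(f) = \|\nabla_\V f\|^2$ for the horizontal and vertical carr\'e du champ operators, and introduce the iterated operators
$$\Gamma_2(f) = \tfrac12\bigl(\Dh\Gamma(f) - 2\langle\nabla_\Ho f,\nabla_\Ho\Dh f\rangle\bigr), \qquad \Gamma^\V_2(f) = \tfrac12\bigl(\Dh\Gamma^\V(f) - 2\langle\nabla_\V f,\nabla_\V\Dh f\rangle\bigr),$$
so that the asserted inequality is exactly $\Gamma_2(f) + \ve\,\Gamma^\V_2(f) \ge \tfrac1n(\Dh f)^2 + (K - \tfrac m\ve)\,\Gamma(f) + \tfrac n4\,\Gamma^\V(f)$ for every $\ve>0$. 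The plan is to run the sub-Riemannian Bochner computation for the Bott connection as in \cite{BG17}, and then to evaluate \emph{all} of the resulting torsion error terms by means of the two algebraic H-type identities $J_Z^2 = -\|Z\|^2\mathbf{Id}_\Ho$ and $\langle J_Z X, J_Z Y\rangle = \|Z\|^2\langle X,Y\rangle$. Since an H-type foliation is automatically Yang--Mills (Theorem~\ref{th:AllYM}), no hypothesis beyond $\ri_\Ho\ge Kg_\Ho$ will be needed.

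First I would fix $p\in\M$ and a $\nabla$-normal orthonormal frame $X_1,\dots,X_n$ of $\Ho$ and $Z_1,\dots,Z_m$ of $\V$. Writing $\Dh f = \sum_i(X_iX_i - \nabla_{X_i}X_i)f$ and expanding $\Dh\Gamma(f)$ and $\langle\nabla_\Ho f,\nabla_\Ho\Dh f\rangle$ with the Bott connection, one obtains, exactly as in \cite{BG17}, an identity of the form
$$\Gamma_2(f) = \|\nabla^2_\Ho f\|^2 + \langle \ri_\Ho\nabla_\Ho f,\nabla_\Ho f\rangle + (\text{terms linear in } \nabla T) + (\text{terms bilinear in } \nabla^2_{\Ho\V}f \text{ and } J),$$
where $\nabla^2_\Ho f$ is the horizontal block of the Bott Hessian (not symmetric: its skew part is $-\tfrac12 J_{\nabla_\V f}$) and $\nabla^2_{\Ho\V}f$ is the mixed Hessian; commuting two horizontal derivatives produces the torsion, hence the operator $J_{\nabla_\V f}$, which is the source of every error term. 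The analogous expansion of $\Gamma^\V_2(f)$ yields $\|\nabla^2_{\Ho\V}f\|^2$ together with further $\nabla T$-terms and terms that, because $T(\Ho,\V)=T(\V,\V)=0$ and $\nabla$ preserves the splitting, involve no vertical curvature at all.

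The heart of the matter is that every error term is now algebraic in $J$ and gets evaluated by the H-type identities. The $\nabla T$-terms reduce to $\sum_{i}(\nabla_{X_i}J)_W X_i$ contracted against horizontal vectors, and this vanishes by the Yang--Mills property in precisely the form established in the proof of Theorem~\ref{th:AllYM}. The skew part of $\nabla^2_\Ho f$ contributes $\|\tfrac12 J_{\nabla_\V f}\|^2_{\mathrm{HS}} = \tfrac14\sum_{i=1}^n\|J_{\nabla_\V f}X_i\|^2 = \tfrac n4\,\Gamma^\V(f)$ via $\|J_W X\|^2=\|W\|^2\|X\|^2$, while its symmetric part has the same trace $\Dh f$, so that $\|\nabla^2_\Ho f\|^2 \ge \tfrac1n(\Dh f)^2 + \tfrac n4\,\Gamma^\V(f)$ by Cauchy--Schwarz; together with $\ri_\Ho\ge Kg_\Ho$ this produces the $\tfrac1n(\Dh f)^2 + K\,\Gamma(f) + \tfrac n4\,\Gamma^\V(f)$ part. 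Finally the bilinear $\nabla^2_{\Ho\V}$--$J$ cross-terms are handled by a weighted Young inequality: one spends $\ve\,\Gamma^\V_2(f)$ to absorb the $\|\nabla^2_{\Ho\V}f\|^2$ contributions and pays $\tfrac1\ve\sum_{l=1}^m\|J_{Z_l}\nabla_\Ho f\|^2 = \tfrac m\ve\,\Gamma(f)$ for the remainder, the last equality being once more the H-type identity (equivalently $\sum_{l} J_{Z_l}^2 = -m\,\mathbf{Id}_\Ho$). Collecting everything yields the $\mathrm{CD}(K,\tfrac n4,m,n)$ inequality.

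The main obstacle is the bookkeeping in the Bochner step: one must carry the computation far enough to verify that, after invoking Yang--Mills, the \emph{only} surviving terms are (i) the horizontal Hessian square, (ii) the horizontal Ricci term, (iii) the torsion square giving $\tfrac n4\,\Gamma^\V(f)$, and (iv) the mixed-Hessian cross-terms that $\ve\,\Gamma^\V_2(f)$ absorbs with exactly the weight $m/\ve$ — in particular that every $\nabla T$-term cancels or vanishes and that no vertical-curvature term is left over, which is where the totally geodesic structure (the Bott torsion identities $T(\Ho,\V)=T(\V,\V)=0$) and the H-type hypothesis are used jointly. This is the H-type incarnation of the general Yang--Mills curvature--dimension estimate of \cite{BG17}, with the abstract torsion constants there now taking the explicit values $m$ and $n/4$.
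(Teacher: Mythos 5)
Your proposal is correct and follows essentially the same route as the paper: the paper's own proof consists precisely of the observation that H-type foliations are Yang--Mills (Theorem~\ref{th:AllYM}) and a reference to the Bochner computation of \cite{BG17}, which is exactly the argument you carry out, with the H-type identities correctly producing the constants $\rho_2=\tfrac n4$ (from $\|J_{\nabla_\V f}X\|^2=\|\nabla_\V f\|^2\|X\|^2$ summed over a horizontal frame) and $\kappa=m$ (from $\sum_l J_{Z_l}^2=-m\,\mathbf{Id}_\Ho$). Your write-up is in fact more detailed than the paper's, which omits the computation entirely.
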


\begin{proof}
The key point is that H-type foliations are Yang-Mills (see Theorem \ref{th:AllYM}). The proof is then similar to the proof of this result in the case of Sasakian foliations (see Theorem 2.24 in \cite{BG17}), so we omit it for conciseness, but refer to Remark 2.25 in \cite{BG17}.
\end{proof}

As a corollary  from Proposition \ref{generalized CD} and \cite{BaudoinEMS2014, BBG14, BG17} one deduces the following results:

\begin{corollary} \label{yang-mills corollary}
Let $(\M, \Ho, g)$ be a complete  H-type foliation  with $\ri_\Ho \ge K g_\Ho$ with $K \in \R$. Let us denote by  $d$ the sub-Riemannian (a.k.a.\ Carnot-Carath\'eodory) distance.

\begin{enumerate}
\item If $K \ge 0$, then the metric measure space  $(\M, d, \mu)$ satisfies the volume doubling property and  supports a 2-Poincar\'e inequality, i.e. there exist constants $C_D, C_P>0$, depending only on $K,n,m$, for which one has for every $p \in \M$ and every $r>0$:
\begin{equation}\label{dcsr}
\mu(B(p,2r)) \le C_D\ \mu(B(p,r)),
\end{equation}
\begin{equation}\label{pisr}
\int_{B(p,r)} |f - f_B|^2 d\mu_g \le C_P r^2 \int_{B(p,r)} \| \nabla_{\Ho} f \|^2 d\mu_g,
\end{equation}
for every $f\in C^1( B(p,r))$, where we have let $f_B = \mu_g(B)^{-1} \int_B f d\mu_g$, with $B = B(p,r)$.

\item If $K >0$, then $\M $ is compact with a finite fundamental group and
\begin{align*}
\diam (\M,d)\le  2 \sqrt{3} \pi \sqrt{ \frac{(n+4m)(n+6m)}{nK} }.
 \end{align*}
\item If $K >0$, then the first non zero eigenvalue of the sub-Laplacian $-\Delta_{\Ho}$ satisfies 
\begin{equation}
\lambda_1 \ge \frac{nK}{ n+3m-1}.
\end{equation}
\end{enumerate}
\end{corollary}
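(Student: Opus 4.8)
The plan is to deduce all three assertions from Proposition~\ref{generalized CD} together with the now-standard machinery of generalized curvature dimension inequalities developed in \cite{BaudoinEMS2014, BBG14, BG17}. The crucial structural fact is that, by Theorem~\ref{th:AllYM}, an H-type foliation is Yang-Mills, so that the foliated curvature dimension framework of \cite{BG17} applies; Proposition~\ref{generalized CD} then states precisely that the hypothesis $\ri_\Ho \ge K g_\Ho$ forces the inequality CD$\left(K,\frac{n}{4},m,n\right)$. Completeness of $g$ guarantees that $\Dh$ is essentially self-adjoint on $C_0^\infty(\M)$, which is the analytic input needed to run the arguments of the cited references. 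From that point on, each of the three items is obtained by invoking the corresponding general theorem with the substitution $\rho_1 = K$, $\rho_2 = \frac{n}{4}$, $\kappa = m$, $d = n$.

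For item (1), I would invoke the main results of \cite{BBG14}: a complete structure satisfying CD$(\rho_1,\rho_2,\kappa,d)$ with $\rho_1 \ge 0$ admits a Li--Yau type gradient estimate and a parabolic Harnack inequality for the heat semigroup generated by $\Dh$, and hence the volume doubling property \eqref{dcsr} and the $2$-Poincar\'e inequality \eqref{pisr}, with constants $C_D, C_P$ depending only on $\rho_1,\rho_2,\kappa,d$, i.e.\ here only on $K,n,m$. For item (2), the Bonnet--Myers theorem for CD$(\rho_1,\rho_2,\kappa,d)$ with $\rho_1 > 0$ (see \cite{BaudoinEMS2014, BBG14}) yields compactness of $\M$ together with an explicit diameter estimate; substituting the values of $\rho_1,\rho_2,\kappa,d$ produces exactly the bound $\mathbf{diam}(\M,d) \le 2\sqrt{3}\,\pi\sqrt{(n+4m)(n+6m)/(nK)}$. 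Finiteness of $\pi_1(\M)$ then follows by passing to the universal Riemannian cover $\widetilde{\M}$: the foliation, the Bott connection and its curvature all lift, so $\widetilde{\M}$ is again a complete H-type foliation satisfying the same horizontal Ricci lower bound, hence CD$\left(K,\frac{n}{4},m,n\right)$, hence is compact, forcing $\pi_1(\M)$ to be finite. For item (3), the Lichnerowicz-type spectral gap estimate for CD$(\rho_1,\rho_2,\kappa,d)$ with $\rho_1 > 0$ (as in \cite{BBG14}, and in the foliated form of \cite{BG17}) gives a lower bound on the first nonzero eigenvalue of $-\Dh$; the same substitution yields $\lambda_1 \ge nK/(n+3m-1)$.

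Since the genuinely nontrivial analytic work — the horizontal Bochner identity and its coupling with the vertical carr\'e du champ — has already been packaged into Proposition~\ref{generalized CD} (which itself rests on the Yang-Mills property, Theorem~\ref{th:AllYM}), the only remaining task is bookkeeping: checking that the abstract constants appearing in the three general theorems, once specialized to $(\rho_1,\rho_2,\kappa,d)=(K,\frac{n}{4},m,n)$, collapse to the stated numerical expressions. Tracking where the combinations $n+4m$, $n+6m$ and $n+3m-1$ originate is the one point that needs care, but it introduces no new ideas, and for this reason the proof can be kept short by simply citing the relevant statements.
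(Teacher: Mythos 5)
Your proposal matches the paper's proof, which likewise derives all three items by specializing the cited theorems of \cite{BBG14}, \cite{BG17} and \cite{BaudoinEMS2014} to the inequality CD$\left(K,\frac{n}{4},m,n\right)$ furnished by Proposition \ref{generalized CD} (itself resting on Theorem \ref{th:AllYM}). The only cosmetic difference is that the paper records the substitution for Point 3 as $\rho_1=K$, $\rho_2=n$, $\kappa=m$, $d=n$ (reflecting the normalization of the cited reference), but the resulting numerical bounds are the same as yours.
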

\begin{proof}
Point 1. follows from \cite[Theorem 1.5]{BBG14}, and Point 2. from \cite[Theorem 10.1]{BG17} or \cite[Theorem 6.1]{BaudoinEMS2014} for a simpler proof. Point 3 follows from \cite[Theorem 4.9]{BaudoinEMS2014} with the values, $\rho_1=K$, $\rho_2=n$, $\kappa=m$ and $d=n$. 
\end{proof}

\begin{Remark}
The volume doubling property and 2-Poincar\'e inequality are central for the validity of covering theorems of Vitali-Wiener type, maximal function estimates, and represent the central ingredients in the development of analysis and geometry on metric measure spaces,  see for instance \cite{Heinonen01} and the more recent \cite{Shanmu15}. It is not known if the generalized curvature dimension implies the significantly stronger $1$-Poincar\'e inequality. We point out that the diameter upper bound which is obtained when $K>0$ is not sharp. In the subsequent paper \cite{BGMR19},  under stronger geometric assumptions (lower bounds on partial traces of the tensor $R_\Ho$), both the 1-Poincar\'e inequality (actually even the measure contraction property) and sharp diameter upper bounds are proved.
\end{Remark}

More consequences of the generalized curvature dimension inequality are given in \cite{BaudoinEMS2014, BBG14, BG17},  for instance  Li-Yau estimates for non negative  solutions of the sub-Riemannian heat equation or subelliptic Sobolev and log-Sobolev inequalities.

\section{Horizontal Clifford structures}

We now turn to the second part of the paper and study  H-type foliations that carry a parallel horizontal Clifford structure.  One should have the understanding that H-type foliations with a parallel horizontal Clifford structure are to general H-type foliations what Sasakian and 3-Sasakian manifolds are respectively to K-contact and 3K-contact manifolds.

\subsection{Parallel horizontal Clifford structures}

\begin{definition}\label{def parallel Clifford}
Let $(\M,\Ho,g)$ be an H-type foliation with horizontally parallel torsion. We say that  $(\M,\Ho,g)$  is an H-type foliation with a parallel horizontal Clifford structure if there exists 
a smooth bundle map $\Psi: \V \times \V \to \Cl_2 (\V )$ such that for every $Z_1,Z_2 \in \s{\V}$
\begin{align}\label{def Psi}
(\nabla_{Z_1} J)_{Z_2}=J_{\Psi(Z_1,Z_2)}.
\end{align}
\end{definition}

\begin{Remark}
If $m=1$, then the parallel horizontal Clifford assumption is always satisfied with $\Psi=0$. 
\end{Remark}

\begin{proposition}\label{uniqueness psi}
Let $(\M,\Ho,g)$ be a H-type foliation with parallel horizontal Clifford structure. Then the map $\Psi$ is unique.
\end{proposition}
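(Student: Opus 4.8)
The plan is to reduce the problem to showing that the Clifford representation $J \colon \mathbf{Cl}(\V_p) \to \mathbf{End}(\Ho_p)$ is injective on $\mathbf{Cl}_2(\V_p) \cong \wedge^2 \V_p$, since then the defining relation \eqref{def Psi} determines $\Psi(Z_1,Z_2)$ pointwise and unambiguously. Concretely, suppose $\Psi$ and $\widetilde\Psi$ both satisfy \eqref{def Psi}; then at every $p$ and for all $Z_1, Z_2 \in \V_p$ we have $J_{\Psi(Z_1,Z_2)} = (\nabla_{Z_1} J)_{Z_2} = J_{\widetilde\Psi(Z_1,Z_2)}$, so $J_{\Psi(Z_1,Z_2) - \widetilde\Psi(Z_1,Z_2)} = 0$ with $\Psi(Z_1,Z_2) - \widetilde\Psi(Z_1,Z_2) \in \mathbf{Cl}_2(\V_p)$. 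Hence it suffices to prove that $J$ restricted to $\mathbf{Cl}_2(\V_p)$ has trivial kernel.

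First I would handle the low-rank cases separately, since there $\mathbf{Cl}_2(\V_p)$ is small: if $m = \rank \V = 1$ then $\mathbf{Cl}_2(\V_p) = 0$ (there is no nonzero $2$-form), so $\Psi = 0$ necessarily, consistent with the earlier Remark; for $m \geq 2$ one argues structurally. The key input is the H-type identity $J_Z^2 = -\|Z\|^2 \mathbf{Id}_\Ho$ together with $J_{Z_1} J_{Z_2} + J_{Z_2} J_{Z_1} = -2\langle Z_1, Z_2\rangle \mathbf{Id}_\Ho$, which says precisely that $J$ is a genuine (nonzero, unital) Clifford-module structure on $\Ho_p$. For an orthonormal basis $Z_1, \dots, Z_m$ of $\V_p$, an element $\omega \in \mathbf{Cl}_2(\V_p)$ is $\omega = \sum_{i<j} \omega_{ij}\, Z_i \cdot Z_j$ and $J_\omega = \sum_{i<j}\omega_{ij} J_{Z_i}J_{Z_j}$; I would show this vanishes only if all $\omega_{ij} = 0$ by pairing against suitable endomorphisms. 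For instance, fixing a pair $(k,\ell)$, one computes the trace-type pairing of $J_\omega$ with $J_{Z_\ell}J_{Z_k}$ (or composes and takes traces over $\Ho_p$): using the Clifford relations, $J_{Z_i}J_{Z_j} \cdot J_{Z_\ell}J_{Z_k}$ is $\pm\mathbf{Id}_\Ho$ when $\{i,j\} = \{k,\ell\}$ and is (plus or minus) a product $J_{Z_a}J_{Z_b}$ with $a \neq b$ otherwise, the latter having vanishing trace on the module. Since $\tr_{\Ho_p}\mathbf{Id}_\Ho = n > 0$, this forces $\omega_{k\ell} = 0$, and as $(k,\ell)$ was arbitrary, $\omega = 0$.

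Alternatively, and perhaps more cleanly, I would invoke Lemma \ref{algebra}: the map $Z \mapsto J_Z$ extends to a surjective Lie algebra homomorphism from $\V_p \oplus \mathbf{Cl}_2(\V_p) \cong \mathfrak{so}(m+1)$ onto $\mathfrak{a}(p)$, and when $m \neq 3$ this is an isomorphism (by simplicity of $\mathfrak{so}(m+1)$), so in particular $J|_{\mathbf{Cl}_2(\V_p)}$ is injective; the case $m = 3$ needs a direct check because $\mathfrak{so}(4)$ is not simple, but there one verifies by hand using the explicit quaternionic relations $J_{z_1}^2 = J_{z_2}^2 = J_{z_3}^2 = J_{z_1}J_{z_2}J_{z_3} = -\mathbf{Id}_\Ho$ that the six products $J_{Z_i}J_{Z_j}$ are linearly independent in $\mathbf{End}(\Ho_p)$. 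The main obstacle is precisely the bookkeeping in this $m = 3$ exceptional case (and, if one takes the hands-on route rather than the Lie-algebra route, making the trace-pairing argument watertight in all signatures), but in every case the conclusion is that $J$ is injective on $2$-forms, whence $\Psi$ is uniquely determined by \eqref{def Psi}.
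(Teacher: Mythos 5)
Your reduction is exactly the one used in the paper: everything comes down to showing that $J$ restricted to $\mathbf{Cl}_2(\V_p)$ is injective. The paper settles this in one line by central simplicity: $\mathbf{Cl}(\V_p)$ is central simple for $m$ even and $\mathbf{Cl}^0(\V_p)$ is central simple for $m$ odd, so the unital algebra homomorphism $J$ is injective on the relevant (sub)algebra and hence on $\mathbf{Cl}_2(\V_p)$. Your trace-pairing route is genuinely different, but as written it has a concrete gap. The classification ``$J_{Z_i}J_{Z_j}\, J_{Z_\ell}J_{Z_k}$ is $\pm\mathbf{Id}_\Ho$ if $\{i,j\}=\{k,\ell\}$ and $\pm J_{Z_a}J_{Z_b}$ otherwise'' is false when $\{i,j\}\cap\{k,\ell\}=\emptyset$, which occurs as soon as $m\ge 4$: there the product is a degree-four element $\pm J_{Z_i}J_{Z_j}J_{Z_k}J_{Z_\ell}$ with four distinct indices. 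Its tracelessness does not follow from the reason that works for $J_{Z_a}J_{Z_b}$ (skew-symmetry): a product of four distinct anticommuting skew-symmetric complex structures is \emph{symmetric}, and a priori such elements can act as $\pm\mathbf{Id}_\Ho$ on a Clifford module (compare $J_{Z_1}J_{Z_2}J_{Z_3}=\pm\mathbf{Id}_\Ho$ in the quaternionic case). The missing step is supplied by a conjugation argument: for four distinct indices,
\begin{equation}
J_{Z_i}\bigl(J_{Z_i}J_{Z_j}J_{Z_k}J_{Z_\ell}\bigr)J_{Z_i}^{-1}=-J_{Z_i}J_{Z_j}J_{Z_k}J_{Z_\ell},
\end{equation}
so the trace vanishes. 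With that supplement the Gram matrix of $\{J_{Z_i}J_{Z_j}\}_{i<j}$ under $(A,B)\mapsto \tr(AB)$ is $n$ times the identity and your argument closes.

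Your alternative route via Lemma \ref{algebra} works for $m\neq 3$, but for $m=3$ your ``direct check'' only treats the quaternionic case. In the non-quaternionic case the surjection $\mathfrak{so}(4)\to\mathfrak{a}(p)\cong\mathfrak{so}(3)$ has a nontrivial kernel, namely one of the two $\mathfrak{so}(3)$ ideals (the self-dual or anti-self-dual part of $\mathfrak{so}(4)$), and one must still check that this ideal meets $\mathbf{Cl}_2(\V_p)\cong\wedge^2\V_p$ only in $0$ (it does, since its elements have nonzero $\V_p$-component). Also, the ``six products $J_{Z_i}J_{Z_j}$'' are not linearly independent, since $J_{Z_j}J_{Z_i}=-J_{Z_i}J_{Z_j}$; you mean the three products with $i<j$. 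Both routes are repairable, but the paper's central-simplicity argument avoids all of these case distinctions.
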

\begin{proof}
The proposition follows from the following fact: at any $p \in \M$, the map $J : \Cl_2 (\V_p )   \to  \End( \Ho_p)$ defined by the restriction of $Z\mapsto J_Z$ to $\Cl_2 (\V_p )$ is injective. We prove this claim.
If $m$ is even, then $\Cl (\V_p )$ is a central simple algebra, thus the map $J:  \Cl (\V_p )   \to  \End( \Ho_p)$ is injective and so is the restriction $J : \Cl_2 (\V_p )   \to  \End( \Ho_p)$. If $m$ is odd, then the even Clifford algebra  $\Cl^0 (\V_p )$ is central simple. Thus the map $J:  \Cl^0 (\V_p )   \to  \End( \Ho_p)$ is injective and so is the restriction $J : \Cl_2 (\V_p )   \to  \End( \Ho_p)$.
\end{proof}

We have the following lemma concerning some algebraic properties of the map $\Psi$.

\begin{lemma}\label{property Psi}
Let $\Psi$ be defined by \eqref{def Psi}. Then, for every $u,v \in \V$ we have
\begin{enumerate}
\item $\Psi(u,v)=-\Psi(v,u)$;
\item $\Psi(u,v)\cdot v+v\cdot \Psi(u,v)=0$.
\end{enumerate}
\end{lemma}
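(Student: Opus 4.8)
The plan is to exploit the two defining properties of $J$ — the skew-symmetry of each $J_W$ and the H-type (Clifford) identity $J_W^2 = -\|W\|^2\mathbf{Id}_\hor$ — differentiated along vertical directions, together with the already-established fact (Lemma \ref{skew J}) that $(\nabla_Z J)_W = -(\nabla_W J)_Z$. For item (1), I would simply apply the anti-symmetry relation of Lemma \ref{skew J}: since $(\nabla_u J)_v = J_{\Psi(u,v)}$ and $(\nabla_v J)_u = J_{\Psi(v,u)}$, the relation $(\nabla_u J)_v = -(\nabla_v J)_u$ gives $J_{\Psi(u,v)} = -J_{\Psi(v,u)} = J_{-\Psi(v,u)}$, and then injectivity of $J$ restricted to $\mathbf{Cl}_2(\V)$ (Proposition \ref{uniqueness psi}) yields $\Psi(u,v) = -\Psi(v,u)$.

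For item (2), the key is to differentiate the H-type identity $J_v^2 = -\|v\|^2 \mathbf{Id}_\hor$ in a vertical direction $u$ at a point where one extends $v$ to be $\nabla$-parallel along the relevant curve (so that $\nabla_u v = 0$ and $u(\|v\|^2) = 2\langle \nabla_u v, v\rangle = 0$ there). This gives $(\nabla_u J)_v J_v + J_v (\nabla_u J)_v = 0$, i.e. $J_{\Psi(u,v)} J_v + J_v J_{\Psi(u,v)} = 0$. Now I use the Clifford algebra homomorphism property $J_a J_b = J_{a\cdot b}$ (valid since $J$ extends to a $\mathbf{Cl}(\V)$-algebra homomorphism, and $\Psi(u,v) \in \mathbf{Cl}_2(\V)$): the left side becomes $J_{\Psi(u,v)\cdot v + v\cdot \Psi(u,v)}$. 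Observing that $\Psi(u,v)\cdot v + v \cdot \Psi(u,v)$ again lies in $\mathbf{Cl}_2(\V)$ — a degree-two element Clifford-multiplied on both sides by a degree-one element, the degree-three parts canceling by the anticommutation, leaving a degree-one-times-degree-two-parity element, which one checks sits in $\mathbf{Cl}_2(\V)$ under the identification $\wedge^2\V \cong \mathbf{Cl}_2(\V)$ — I again invoke injectivity of $J|_{\mathbf{Cl}_2(\V)}$ to conclude $\Psi(u,v)\cdot v + v\cdot \Psi(u,v) = 0$.

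The main obstacle I anticipate is the bookkeeping around which Clifford-algebra subspace various expressions land in: one must be careful that $\Psi(u,v)\cdot v + v\cdot\Psi(u,v)$ is genuinely an element on which $J$ is injective, rather than merely concluding that its image under $J$ vanishes. Writing $\Psi(u,v) = \sum_{i<j} c_{ij}(e_i\cdot e_j + \langle e_i,e_j\rangle)$ in an orthonormal frame $\{e_i\}$ of $\V$ and computing $\Psi(u,v)\cdot v + v\cdot\Psi(u,v)$ explicitly shows the anticommutator of a $2$-form with a $1$-form is again (the image of) a $1$-form, hence lies in $\spn\{J_z : z\in \V\} \subset J(\mathbf{Cl}_2(\V))$; so injectivity of $J$ on $\mathbf{Cl}_2(\V)$ does apply. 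Alternatively, and more cleanly, one can pair with an arbitrary $J_w$ and use the trace form on $\mathbf{End}(\hor)$ together with the representation-theoretic nondegeneracy, but the direct approach via injectivity suffices and is shorter.
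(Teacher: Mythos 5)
Your proof of item (1) is exactly the paper's: combine Lemma \ref{skew J} with the uniqueness of $\Psi$ from Proposition \ref{uniqueness psi}. The first half of your argument for item (2) is also the paper's: differentiating $J_vJ_v=-\langle v,v\rangle\,\mathbf{Id}_\hor$ in the direction $u$ gives $J_{\Psi(u,v)}J_v+J_vJ_{\Psi(u,v)}=J_{\Psi(u,v)\cdot v+v\cdot\Psi(u,v)}=0$, i.e.\ the anticommutator lies in $\ker J$. The gap is in your final step. The anticommutator of a degree-two element of the Clifford algebra with a degree-one element is a degree-\emph{three} element, not a degree-one or degree-two one: in an orthonormal frame of $\V_p$ with $e_1=v/\|v\|$ one computes $e_1\cdot e_j\cdot e_1+e_1\cdot e_1\cdot e_j=0$, while $e_i\cdot e_j\cdot e_1+e_1\cdot e_i\cdot e_j=2\,e_1\cdot e_i\cdot e_j$ for $i,j\neq 1$. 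It is the \emph{commutator} whose degree-three parts cancel, leaving a degree-one element; you have the two cases interchanged. Hence $\Psi(u,v)\cdot v+v\cdot\Psi(u,v)=2\,b\cdot v$, where $b$ is the part of $\Psi(u,v)$ spanned by the $e_i\cdot e_j$ with $i,j\neq 1$, and this is a pure degree-three element. No injectivity of $J$ on degree-three elements has been established (and it can fail, e.g.\ for $m=3$ the volume element may act as $\pm\mathbf{Id}_\hor$), so invoking injectivity of $J$ restricted to $\mathbf{Cl}_2(\V_p)$ does not close the argument as written.

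The repair is short and is what the paper does: write $\Psi(u,v)=a+b$ with $a\cdot v=-v\cdot a$ and $b\cdot v=v\cdot b$ (equivalently, $a$ is the part of $\Psi(u,v)$ containing $v$ and $b$ the part not containing it). Then $\Psi(u,v)\cdot v+v\cdot\Psi(u,v)=2\,b\cdot v$, so $J_bJ_v=J_{b\cdot v}=0$; since $J_v$ is invertible by the H-type condition (for $v\neq 0$), this forces $J_b=0$, and now $b\in\mathbf{Cl}_2(\V_p)$, so the injectivity established in Proposition \ref{uniqueness psi} applies and gives $b=0$, hence $\Psi(u,v)\cdot v+v\cdot\Psi(u,v)=2\,b\cdot v=0$. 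The lesson is that the injectivity you want to use lives only on $\V_p$ and on $\mathbf{Cl}_2(\V_p)$, so you must first reduce the unknown element to one of those subspaces by factoring out the invertible $J_v$.
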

\begin{proof}
Fix non zero $u,v \in \V_p$. The first statement follows from $(\nabla_u J)_v=-(\nabla_v J)_u$ and the uniqueness of $\Psi$. For the second one, since $\Psi(u,v) \in  \Cl_2 (\V_p )$, one can find $a,b \in \Cl_2 (\V_p )$ such that $\Psi(u,v) = a + b$, and such that $v \cdot a = -a\cdot v$ and $v \cdot b = b \cdot v$. The second statement is then equivalent to $b=0$. If we apply $\nabla_u$ to the relation $J_v J_v=- \langle v,v\rangle  \Id_\Ho$ one obtains that $\Psi(u,v)\cdot v+v\cdot \Psi(u,v)$ belongs to the kernel of $J : \Cl (\V_p )   \to  \End(\Ho_p)$.  Therefore, we obtain
\begin{equation}
(a+ b)\cdot v + v \cdot (a+b) \in \ker J.
\end{equation}
Using the properties of $a,b$ we obtain $b\cdot v \in \ker J$ or, equivalently, $b \in \ker J$. Since $b \in  \Cl_2 (\V_p )$ and  $J : \Cl_2 (\V_p )   \to  \End( \Ho_p)$ is injective by the proof of Proposition~\ref{uniqueness psi}, we have $b=0$.
\end{proof}

The previous lemma imposes strong algebraic conditions on $\Psi$. The next theorem characterizes the set of possible expressions $\Psi$ may have. We shall first need the following lemma that allows us to relate the norm of $\nabla J$ to the sectional curvature of the leaves of the foliation.

\begin{lemma}\label{vertical sectional curvature}
Let \((\M,\Ho,g)\) be a H-type foliation with horizontally parallel torsion. Then for every \(X \in \s\Ho, Z,W \in \s\V\),
\begin{equation}
\| (\nabla_{Z} J)_{W} X \|^2=\langle R(Z,W)W, Z \rangle \| X \|^2. 
\end{equation}
\end{lemma}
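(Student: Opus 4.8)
The plan is to prove the pointwise identity by expressing both sides in terms of the curvature tensor $R$ of the Bott connection, using the structural decomposition from Lemma~\ref{prop:TGCurvature}. First I would fix a point $p \in \M$ and work with $X \in \Ho_p$, $Z, W \in \V_p$. The natural starting point is to compute $\langle (\nabla_Z J)_W X, (\nabla_Z J)_W X \rangle$ and relate $(\nabla_Z J)_W$ to a covariant derivative of the torsion $T$ via the defining relation \eqref{Jmap}: for horizontal $X, Y$ one has $\langle (\nabla_Z J)_W X, Y\rangle = \langle (\nabla_Z T)(X,Y), W\rangle$ (this uses $\nabla_\hor T = 0$ so that $\nabla_Z$ genuinely falls on the torsion in the vertical direction). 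Thus $(\nabla_Z J)_W X$ is the horizontal vector dual to the form $Y \mapsto \langle (\nabla_Z T)(X,Y), W\rangle$.

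The key step is then to invoke Lemma~\ref{prop:TGCurvature}, which gives $R(U,V)W = R_\hor(U,V)W + R_\ver(U,V)W + (\nabla_W T)(U,V)$. Applying this with horizontal $U = X$, $V = (\nabla_Z J)_W X$ (or, more cleanly, testing against an orthonormal horizontal basis), the term $(\nabla_{(\cdot)} T)$ reproduces exactly the quantity $\|(\nabla_Z J)_W X\|^2$ when one traces appropriately, while the $R_\hor$ term drops out because it only sees horizontal-horizontal curvature and the remaining argument is vertical. Concretely, I expect the computation to go: $\|(\nabla_Z J)_W X\|^2 = \sum_i \langle (\nabla_Z J)_W X, X_i\rangle^2 = \sum_i \langle (\nabla_Z T)(X, X_i), W\rangle^2$, and then use the skew-symmetry properties of $J_W$ (the H-type condition $J_W^2 = -\|W\|^2 \mathbf{Id}_\hor$ and Lemma~\ref{skew J}) together with \eqref{BottAntiSym} to convert this into $\langle R(Z,W) W, Z\rangle \|X\|^2$. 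The appearance of $\|X\|^2$ on the right should come from the H-type normalization: the operator $Y \mapsto (\nabla_Z J)_W Y$ interacts with $J_W$ and $J_Z$ in a way that makes $\|(\nabla_Z J)_W X\|^2$ proportional to $\|X\|^2$ with a constant independent of $X$ — this is essentially properties (b)--(e) of the Lemma preceding Theorem~\ref{th:AllYM}, which say the relevant endomorphisms anticommute with $J_W$ and are skew.

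I would organize the argument as follows: (1) reduce to $Z, W \in \V_p$, $X \in \Ho_p$ unit vectors, using bilinearity and the fact that both sides are tensorial; (2) use \eqref{Jmap} and $\nabla_\hor T = 0$ to write $\langle (\nabla_Z J)_W X, Y\rangle = \langle (\nabla_Z T)(X,Y), W\rangle$; (3) apply the antisymmetry formula \eqref{BottAntiSym} with the substitution $X \to Z$, $Y \to W$, $V \to X$, $W \to (\nabla_Z J)_W X$ (all consistent with horizontality/verticality) to get $\langle R(Z,W)X, (\nabla_Z J)_W X\rangle$ equated to curvature terms plus $\langle (\nabla_X T)(Z,W), \ldots\rangle$-type corrections; (4) show the correction terms vanish using Lemma~\ref{prop:TGCurvature} (the $(\nabla_W T)$ piece is the only surviving cross term) and the algebraic identities for $\nabla J$; (5) collect to get $\langle R(Z,W)W, Z\rangle \|X\|^2$. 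The main obstacle I anticipate is step (3)--(4): carefully bookkeeping which curvature components survive and checking that the ``extra'' torsion-derivative terms either cancel in pairs or vanish by the first Bianchi identity (as in the proof of Lemma~\ref{prop:TGCurvature}), and getting the constant exactly right — in particular confirming there is no stray factor of $\|W\|^2$ or sign error, which requires using the H-type relation $J_W^2 = -\|W\|^2 \mathbf{Id}_\hor$ at precisely the right moment. Everything else is routine tensor manipulation with the identities already established in the excerpt.
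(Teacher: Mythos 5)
There is a genuine gap: your plan is entirely first-order, but the identity to be proved is intrinsically second-order. The quantity $\langle R(Z,W)W,Z\rangle$ is a vertical sectional curvature, while $\|(\nabla_Z J)_W X\|^2$ is the square of a first covariant derivative of the torsion; relating them is a Gauss/O'Neill-type statement that cannot follow from the first Bianchi identity, the decomposition of Lemma~\ref{prop:TGCurvature}, and the antisymmetry formula \eqref{BottAntiSym} alone. Indeed, if you carry out your step (3) with $X\to Z$, $Y\to W$, $V\to X$, $W\to (\nabla_Z J)_W X$, both sides of \eqref{BottAntiSym} collapse: $R(Z,W)X=0$ and $\langle R(X,\xi)Z,W\rangle=\langle(\nabla_Z T)(X,\xi),W\rangle$ by Lemma~\ref{prop:TGCurvature}, while $(\nabla_X T)(Z,W)=0$ since $T$ vanishes on $\V\times\V$ and $\nabla$ preserves $\V$ — so the identity reduces to a tautology $0=0$ and produces neither $\|(\nabla_Z J)_W X\|^2$ nor $\langle R(Z,W)W,Z\rangle$. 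Your preliminary step (2), rewriting $\langle(\nabla_Z J)_W X,Y\rangle=\langle(\nabla_Z T)(X,Y),W\rangle$, is correct but is only the easy bookkeeping.

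The two missing ideas, which are the substance of the paper's proof, are: (i) the \emph{second} Bianchi identity for connections with torsion, $\circlearrowright(\nabla_u R)(v,w)+\circlearrowright R(T(u,v),w)=0$, applied with two horizontal and one vertical slot; combined with $R(X,Y)W=(\nabla_W T)(X,Y)$, the vanishing of mixed curvature, and the H-type relation $T(X,J_W X)=\|X\|^2 W$, this expresses $\langle R(Z,W)W,Z\rangle$ in terms of a \emph{second} covariant derivative, namely $-\langle(\nabla^2_{Z,Z}J)_W X,J_W X\rangle$ for unit $X$; and (ii) an integration-by-parts step: choosing a frame parallel along the geodesic generated by $Z$ and differentiating the constant $\|J_W X\|^2=\|W\|^2\|X\|^2$ twice yields $0=\|(\nabla_Z J)_W X\|^2+\langle(\nabla^2_{Z,Z}J)_W X,J_W X\rangle$, which trades the second derivative for the square of the first. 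Without some version of these two steps (or an equivalent appeal to O'Neill-type curvature formulas), the argument as outlined cannot close.
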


\begin{proof}
Recall first the second Bianchi identity for connections with torsion,
\begin{equation}
\circlearrowright (\nabla_u R)(v,w) + \circlearrowright R(T(u,v),w) = 0.
\end{equation}
 From Lemma \ref{prop:TGCurvature} we have that \(R(X,Y)W = (\nabla_{W}T)(X,Y)\) and \(R(Z,X) = 0\) for any \(X,Y \in \s\Ho, Z,W \in \s\V\) and so
\begin{equation}
- R(T(X,Y),Z)W = (\circlearrowright (\nabla_{X} R)(Y,Z))W = (\nabla_{Z,W}^2 T)(X,Y). 
\end{equation}
We now note that for any $W \in \s\V$ and $X \in \s\Ho$, we have as a consequence of the H-type condition
\begin{equation}
T(X, J_W X) = \| X\|^2 W.
\end{equation}
Therefore, we have
\begin{equation}\label{eq:seconder}
\langle R(Z,W)W, Z \rangle {\| X\|^2} = -\langle (\nabla_{Z,Z}^2 T)(X, J_{W}X) , W \rangle =- \langle (\nabla_{Z,Z}^2 J)_{W} X, J_{W}X \rangle.
\end{equation}
We then claim that
\begin{equation}
\|(\nabla_{Z}J)_{W} X\|^2 = \langle R(Z,W)W, Z \rangle  \| X\|^2.
\end{equation}
Since both sides are tensors, it is sufficient to prove the above identity at any given $p \in \M$. Therefore, once we have fixed $p$, we can assume, without loss of generality, that $\nabla_Z Z=\nabla_Z W = \nabla_Z X = 0$ along the geodesic with initial vector $Z(p)$. In this case the following holds:
\begin{align}
0 & = \tfrac{1}{2}\nabla^2_{Z,Z}\langle J_W X,J_W X\rangle \\
& = \nabla_Z \langle (\nabla_Z J)_W X,J_W X\rangle \\
& =  \|  (\nabla_Z J)_W X \|^2 + \langle (\nabla^2_{Z,Z} J)_W X, J_W X\rangle\\
& =   \|  (\nabla_Z J)_W X \|^2 -\langle R(Z,W)W, Z \rangle \| X\|^2,
\end{align}
where everything is computed at $p$, and in the last line we used \eqref{eq:seconder}.
\end{proof}

 We are now in position to prove the following theorem.

\begin{theorem}\label{Formula psi}
Let $(\M,\Ho,g)$ be an H-type foliation with parallel horizontal Clifford structure. Then there exists a constant $\kappa \in \R$ such that for every $u,v \in \V_p$, $p \in \M$
\begin{equation}\label{eq:psi}
\Psi (u,v) =-\kappa ( u \cdot v +\langle u, v \rangle),
\end{equation}
where $u \cdot v$ denotes the product in the Clifford algebra $\Cl(\V_p)$. Moreover the sectional curvature of the leaves of the foliation associated to $\V$ is constantly equal to~$\kappa^2$. In particular, if the torsion is completely parallel, the leaves are flat.
\end{theorem}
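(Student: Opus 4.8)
The plan is to exploit the algebraic constraints on $\Psi$ from Lemma \ref{property Psi} together with the metric identity from Lemma \ref{vertical sectional curvature}. First I would fix a point $p\in\M$ and analyze the constraint that $\Psi(u,v)\in\mathbf{Cl}_2(\V_p)$ together with item (2) of Lemma \ref{property Psi}, namely $\Psi(u,v)\cdot v + v\cdot\Psi(u,v)=0$, i.e.\ $\Psi(u,v)$ anticommutes with $v$ in the Clifford algebra. Decomposing an element of $\wedge^2\V_p\cong\mathbf{Cl}_2(\V_p)$ relative to the direction $v$, one checks that the only bivectors anticommuting with the vector $v$ are those of the form $v\wedge w$ (equivalently $v\cdot w + \langle v,w\rangle$) for $w\in\V_p$; hence for each $u,v$ there is a vector $w=w(u,v)$ with $\Psi(u,v) = v\cdot w + \langle v,w\rangle$. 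Combining with antisymmetry $\Psi(u,v)=-\Psi(v,u)$ and bilinearity in $(u,v)$, a short linear-algebra argument forces $\Psi(u,v) = -\kappa\,(u\cdot v + \langle u,v\rangle)$ for some \emph{scalar function} $\kappa=\kappa(p)$: the point is that $\Psi$, as a map $\wedge^2\V_p\to\mathbf{Cl}_2(\V_p)$ that is $\mathbf{SO}(\V_p)$-equivariant (since everything is built from $\nabla$ and $g$) and lands in bivectors anticommuting appropriately, must be a scalar multiple of the identification $\wedge^2\V_p\to\mathbf{Cl}_2(\V_p)$; when $m=1$ this is vacuous with $\kappa=0$, when $m=2$ one argues directly, and for $m\ge 2$ general the equivariance pins down the multiple uniquely up to the scalar. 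This establishes \eqref{eq:psi} pointwise.

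Next I would compute the sectional curvature of the leaves in terms of $\kappa$. Take orthonormal vertical vectors $u,v\in\V_p$ and a unit horizontal $X$. By Lemma \ref{vertical sectional curvature}, $\langle R(u,v)v,u\rangle = \|(\nabla_u J)_v X\|^2 = \|J_{\Psi(u,v)}X\|^2$. Using \eqref{eq:psi} with $u\perp v$, $\Psi(u,v) = -\kappa\, u\cdot v$, so $J_{\Psi(u,v)} = -\kappa\, J_u J_v$, and then the H-type condition \eqref{jhgfr} (applied twice, since $J_u,J_v$ are orthogonal transformations up to the norm factor) gives $\|J_{\Psi(u,v)}X\|^2 = \kappa^2\|J_uJ_vX\|^2 = \kappa^2\|X\|^2 = \kappa^2$. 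Hence the sectional curvature of the plane spanned by $u,v$ equals $\kappa^2$, independent of the chosen plane, so the leaves have constant sectional curvature $\kappa^2$ (note each leaf is totally geodesic, so its intrinsic sectional curvature coincides with $\langle R(u,v)v,u\rangle$ computed with the Bott connection restricted to $\V$, which agrees with the Levi-Civita connection of the leaf).

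Finally, constancy of $\kappa$ as a function on $\M$: I would argue that $\kappa^2$, being the (constant on each leaf) sectional curvature, is smooth, and that $\nabla_\Ho J = 0$ forces $\kappa$ to be locally constant in horizontal directions — differentiating \eqref{def Psi} and using horizontal parallelism of $T$ (hence of $J$ in the sense $\nabla_\Ho J=0$) shows $(\nabla_X\Psi)(u,v)$ relates to $\nabla_X$ of $J_{\Psi(u,v)}$ in a way that kills the $X$-derivative of $\kappa$; since $\M$ is connected and $\hor$ is bracket-generating, $\kappa$ (up to sign, which can be fixed consistently, or one simply works with $\kappa^2$ and then chooses a smooth square root) is a global constant. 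The last sentence is immediate: if $\nabla T = 0$ then in particular $\nabla_\ver J = 0$, so $\Psi\equiv 0$, whence $\kappa = 0$ and the leaves are flat.

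The main obstacle I anticipate is the purely algebraic step identifying which bivectors in $\mathbf{Cl}_2(\V_p)$ satisfy the anticommutation relation of Lemma \ref{property Psi}(2) for \emph{all} relevant $v$, and extracting from antisymmetry and equivariance that the resulting map is a single scalar multiple of $u\wedge v\mapsto u\cdot v + \langle u,v\rangle$ rather than something more complicated — this requires a careful case check for small $m$ (especially $m=2,3$) and an equivariance/irreducibility argument for larger $m$. The curvature computation and the constancy of $\kappa$ are comparatively routine once \eqref{eq:psi} is in hand.
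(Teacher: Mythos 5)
Your identification of $\Psi$ and the computation of the leaf curvature follow the paper's own route. For the algebraic step you use exactly the two ingredients the paper uses: the anticommutation relation of Lemma \ref{property Psi}(2) to see that, relative to an orthonormal basis $u,v,w_1,\dots,w_{m-2}$, the only surviving bivector component of $\Psi(u,v)$ is the $u\cdot v$ one (the paper expands $\Psi(u,v)$ in the basis $\{u\cdot v,\,u\cdot w_i,\,v\cdot w_i,\,w_i\cdot w_j\}$ and kills all but the first coefficient), and then bilinearity plus skew-symmetry to see that the coefficient is independent of the orthonormal pair. Your appeal to $\mathbf{SO}(\V_p)$-equivariance is a red herring — $\Psi$ at a point is just a tensor, with no group acting — but the bilinearity/antisymmetry argument you also mention is the correct and sufficient one. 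The sectional curvature computation via Lemma \ref{vertical sectional curvature} and the H-type condition is verbatim the paper's.

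The one place where you diverge, and where your sketch has a genuine hole, is the constancy of $\kappa$ over $\M$. Saying that differentiating \eqref{def Psi} horizontally ``kills the $X$-derivative of $\kappa$'' is not yet an argument: to extract $(\nabla_X\Psi)(u,v)=0$ from $\nabla_X\bigl((\nabla_u J)_v\bigr)$ you must commute the two covariant derivatives, and the commutator is governed by $R(X,u)$ and $T(X,u)$ for $X\in\Gamma(\hor)$, $u\in\Gamma(\V)$. The missing ingredient is that both vanish — $T(\hor,\V)=0$ by definition of the Bott connection, and $R(X,u)=0$ by Lemma \ref{prop:TGCurvature} — after which $\nabla^2_{X,u}J=\nabla^2_{u,X}J=0$, hence $J_{(\nabla_X\Psi)(u,v)}=0$, and injectivity of $J$ on $\mathbf{Cl}_2(\V_p)$ (Proposition \ref{uniqueness psi}) gives $X\kappa=0$; bracket-generation of $\hor$ then finishes it. Made precise this way your route works and is a legitimate alternative. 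The paper instead applies the second Bianchi identity to the relation $\langle R(X,Y)Z,W\rangle=-\kappa\langle J_{Z\cdot W}X,Y\rangle$, sums cyclically over three horizontal directions, and chooses $X=J_WV$, $Y=J_ZV$ to isolate $V\kappa$; that argument only needs $\nabla_\hor J=0$ and the metric property of $\nabla$, at the cost of the specific test-vector choice. Either way, you cannot avoid invoking some curvature identity (vanishing of the mixed curvature, or Bianchi II), and your proposal as written names neither.
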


\begin{proof}
We first remark that by linearity, and since $\Psi$ is skew-symmetric and takes values in $\Cl_2(\V)$, it is sufficient to prove \eqref{eq:psi} for unit vectors satisfying $u \perp v$. In this case, fix an orthonormal basis for $\V$ given by $u,v, w_1,\dots,w_{m-2}$. Since $\Psi$ takes values on $\Cl_2(\V)$, we have
\begin{equation}\label{eq:firsteq}
\Psi(u,v) = \psi_{uv} u \cdot v + \sum_{i=1}^{m-2} \psi_{ui} u \cdot w_i + \sum_{i=1}^{m-2} \psi_{vi} v \cdot w_i + \sum_{i< j} \psi_{ij} w_i\cdot w_j,
\end{equation}
for some $\psi_{uv},\psi_{ui},\psi_{vi} \in \R$. Using Lemma~\ref{property Psi} we obtain $\psi_{ui}=\psi_{vi}=\psi_{ij}=0$. Using again Lemma~\ref{property Psi} combined with the bilinearity of $\Psi$ one also obtains that $\psi_{uv} =-\kappa$ does not depend on $u,v$, but may still depend on $p$. Applying Lemma \ref{vertical sectional curvature} with orthonormal \(z,w \in \V_p\) and unit \(u \in \Ho_p\), we obtain for the sectional curvature of the vertical plane generated by $z$ and $w$:
\begin{equation}
\langle R(z,w) w,z \rangle = \|(\nabla_{z} J)_{w} u\|^2 = \|J_{\Psi(z,w)}u\|^2 =  \kappa^2 \|J_{z\cdot w} u\|^2 = \kappa^2.
\end{equation}
We now prove that $\kappa$ is constant as a function on $\M$. For \(X,Y \in \s\Ho\) and orthonormal \(Z,W \in \s\V\), using Lemma \ref{prop:TGCurvature}, we obtain
\begin{equation}
\langle R(X,Y) Z,W \rangle = \langle (\nabla_{Z}J)_{W} X, Y\rangle =- \kappa \langle J_{Z\cdot W} X,Y \rangle.
\end{equation}
Differentiating the above equation with respect to \(V \in \s\Ho\), and summing cyclically over \(V,X,Y\), Bianchi's second identity and the fact that \(\nabla\) is metric imply that
\begin{equation}
0 = \langle \circlearrowright (\nabla_V R)(X,Y) Z ,W \rangle =- \circlearrowright (V \kappa) \langle J_{Z \cdot W} X,Y \rangle.
\end{equation}
By choosing $X = J_W V$ and $Y = J_Z V$, one obtains \(V \kappa = 0\) for all \(V \in \s\Ho\). But this means that $\kappa$ is constant along any curve tangent to $\Ho$, and since \(\Ho\) is bracket-generating, implying that any pair of points can be connected by a horizontal curve, $\kappa$ has to be constant by Remark~\ref{re:Constant}.
\end{proof}

\begin{Remark}
We write \eqref{eq:psi} with $-\kappa$ instead of $\kappa$ because in next sections, we will see that the sign of $\kappa$ is   important and decides of the topology of $\M$ in a crucial way. In particular, we will prove that if $\kappa >0$, then $\M$ is necessarily compact with a finite fundamental group.  See Corollary~\ref{compact finite fundamental corollary}.
\end{Remark}

\subsection{H-type foliations with completely parallel torsion}

We first  study  H-type foliations with completely parallel torsion. This corresponds to a parallel horizontal Clifford structure for which $\kappa= 0$ and so $\Psi=0$. We have the following result that essentially shows that H-type sub-Riemannian manifolds with completely parallel torsion  which are not H-type groups may only exist when $m=1,2$ or $3$.

\begin{theorem}\label{flat leaves}
Let $\pi: (\M,g) \to (\B,h)$ be a Riemannian submersion with totally geodesic fibers. Assume that $\B$ is simply connected and that  $(\M, \Ho, g)$ is an H-type foliation with  completely parallel torsion, where $\Ho$ is the horizontal space of $\pi$. Then one of the following (non exclusive) cases  occur:

\begin{itemize}
\item  $m=1$ and $\B$ is K\"ahler;
\item $m=2$ or $m=3$ and $\B$ is locally hyper-K\"ahler;
\item $m$ is arbitrary and $\B$ is flat, thus isometric to a representation of the Clifford algebra $\Cl (\R^m)$.
\end{itemize}
\end{theorem}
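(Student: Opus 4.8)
The plan is to transport the structure down to the base $\mathbb B$ and then classify it via the de Rham decomposition together with Schur's lemma. First I would extract the foliated consequences of $\nabla T=0$. In particular $\nabla_\hor T=0$, so $(\M,g,\hor)$ is a parallel horizontal Clifford structure, and $\nabla_\ver T=0$ forces the map $\Psi$ of Definition~\ref{def parallel Clifford} to vanish; by Theorem~\ref{Formula psi} this gives $\kappa=0$ and flat leaves, and moreover $\nabla J=0$ (on horizontal directions from $\nabla_\hor T=0$ and the definition~\eqref{Jmap} of $J$, on vertical directions from $\Psi=0$). By Lemma~\ref{prop:TGCurvature} the Bott curvature is $R=R_\hor+R_\ver$, and since the foliation is totally geodesic $R_\ver$ is the leafwise Riemann curvature, hence $R_\ver=0$; consequently $R(\,\cdot\,,\,\cdot\,)Z=0$ for $Z\in\Gamma(\ver)$, i.e.\ $\ver$ is a $\nabla$-flat metric bundle.

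Next I would descend to $\mathbb B$. Since $\pi$ is a Riemannian submersion with totally geodesic fibres, $d\pi$ identifies $(\hor,g_\hor)$ isometrically with $\pi^*T\mathbb B$, and from the explicit formula for the Bott connection one checks that $\nabla|_\hor$ is the pull-back of the Levi-Civita connection $\nabla^h$ of $(\mathbb B,h)$: for basic horizontal $X,Y$ one has $\nabla_Y X=\pi_\hor\nabla^g_Y X$, which is basic and projects to $\nabla^h_{\check Y}\check X$, while for vertical $V$ and basic $X$ the bracket $[V,X]$ is vertical, so $\nabla_V X=\pi_\hor[V,X]=0$. Hence a $\nabla$-parallel section of $\mathrm{End}(\hor)$ has constant coefficients in a basic frame along each fibre and descends to a parallel section of $\mathrm{End}(T\mathbb B)$. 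Working near an arbitrary point of $\M$ (so that, using the $\nabla$-flatness of $\ver$, a $\nabla$-parallel orthonormal frame $Z_1,\dots,Z_m$ of $\ver$ exists) we have $\nabla J_{Z_\alpha}=(\nabla J)_{Z_\alpha}+J_{\nabla Z_\alpha}=0$, so the $J_{Z_\alpha}$ descend to parallel skew-symmetric $\hat J_1,\dots,\hat J_m\in\Gamma(\mathrm{End}(T\mathbb B))$ with $\hat J_\alpha\hat J_\beta+\hat J_\beta\hat J_\alpha=-2\delta_{\alpha\beta}\mathbf{Id}$; each $\hat J_\alpha$ is thus a parallel Hermitian complex structure on $\mathbb B$.

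Finally I would run the holonomy argument. As $g$ is complete so is $h$, and $\mathbb B$ being simply connected it admits a de Rham splitting $\mathbb B=\mathbb R^{N_0}\times\mathbb B_1\times\cdots\times\mathbb B_k$ with the $\mathbb B_i$ irreducible and non-flat; $\mathrm{Hol}(\mathbb B)$ commutes with all $\hat J_\alpha$, hence each $\hat J_\alpha$ preserves every factor. If $m=1$ then $\mathrm{Hol}(\mathbb B)\subset\mathbf U(n)$ and $\mathbb B$ is Kähler. If $m=2$ then $(\hat J_1,\hat J_2,\hat J_1\hat J_2)$ is a parallel quaternionic triple, so $\mathbb B$ is locally hyper-Kähler. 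If $m\ge3$, set $P=\hat J_1\hat J_2\hat J_3$: it is parallel, symmetric, satisfies $P^2=\mathbf{Id}$, and preserves each $\mathbb B_i$, where by Schur's lemma $P=\pm\mathbf{Id}$, so there $\hat J_3=\mp\hat J_1\hat J_2$. For $m=3$ this makes $(\hat J_1,\hat J_2,\hat J_1\hat J_2)$ a quaternionic triple on each $\mathbb B_i$ (and on the flat part after splitting it into the eigenspaces of $P$), so $\mathbb B$ is locally hyper-Kähler. For $m\ge4$, on $\mathbb B_i$ the endomorphism $\hat J_4$ (which also preserves $\mathbb B_i$) anticommutes with $\hat J_1$ and $\hat J_2$, hence commutes with $\hat J_1\hat J_2=\mp\hat J_3$, yet anticommutes with $\hat J_3$; therefore $\hat J_3\hat J_4=0$ on $\mathbb B_i$ and $\hat J_4|_{\mathbb B_i}=0$, contradicting $\hat J_4^2=-\mathbf{Id}$. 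Hence $k=0$, $\mathbb B$ is flat, so (being complete and simply connected) isometric to $\mathbb R^n$, and the constant endomorphisms $\hat J_\alpha$ exhibit $\mathbb R^n$ as a representation of $\mathbf{Cl}(\mathbb R^m)$.

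I expect the main obstacle to be the descent step: one must check carefully that $\nabla|_\hor=\pi^*\nabla^h$ and that $\nabla$-parallelism is precisely the condition needed to push $J$ through the submersion, using the flatness of $\ver$; and, for $m\ge4$, one must be sure the Clifford identities are applied on the correct parallel (isotypic) summands of $T\mathbb B$, i.e.\ that the parallel involution $P$ genuinely respects the irreducible factors. The possible failure of $\M$ to be simply connected, which prevents a global choice of the parallel frame $Z_1,\dots,Z_m$, is harmless, since being Kähler, hyper-Kähler, or flat are holonomy conditions that can be verified locally on $\mathbb B$.
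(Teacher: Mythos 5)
Your proof is correct, and while the reduction to the base is the same as in the paper (flat leaves from $\Psi=0$, a $\nabla$-parallel vertical frame, projection of the $J_{Z_\alpha}$ to parallel complex structures $\hat J_\alpha$ on $\mathbb B$ satisfying the Clifford relations, whence the K\"ahler and locally hyper-K\"ahler cases), your treatment of the crucial case $m\ge 4$ is genuinely different. The paper follows Moroianu--Semmelmann: $\mathbb B$ is locally hyper-K\"ahler, hence Ricci-flat; on an irreducible factor Berger--Simons leaves only the locally symmetric (hence flat) case or holonomy in $\mathbf{SU}(n/2)$, $\mathbf{Sp}(n/4)$, $\mathbf{Spin}(7)$, which are excluded by comparing the dimension of the space of parallel $2$-forms ($\ge 4$ when $m\ge4$) with the centralizers of these holonomy algebras; the reducible case is only waved at (``use de Rham to conclude as above''). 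You replace all of this with an elementary algebraic argument: on each non-flat irreducible de Rham factor the parallel symmetric involution $P=\hat J_1\hat J_2\hat J_3$ has parallel eigenbundles, so $P=\pm\mathbf{Id}$ there, and then $\hat J_4$ both commutes with $\hat J_1\hat J_2=\mp\hat J_3$ and anticommutes with $\hat J_3$, forcing $\hat J_4=0$ --- a contradiction. This avoids Berger's list entirely and handles the reducible case explicitly, which is a real gain. The one step you assert quickly but which does need (and has) a justification is that every parallel endomorphism of $T\mathbb B$ preserves each de Rham factor: this is because, for $\mathbb B$ complete and simply connected, the holonomy group splits as a product acting on the factors through distinct slots, so the non-flat factors are pairwise non-isomorphic holonomy modules even when the underlying manifolds are isometric, and the commutant is block-diagonal; without this observation one could worry that $\hat J_4$ interchanges two isometric hyper-K\"ahler factors, which is exactly the scenario your Schur argument must rule out.
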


\begin{proof}
From Theorem \ref{Formula psi}, we first note that the fibers of $\pi$ have zero sectional curvature. Let $Z_1,\cdots ,Z_m$ be a local orthonormal vertical frame with $\nabla_{Z_i} Z_j =0$. Since for every $Z \in \s{\V}$, $\nabla_Z (J_{Z_i})=0$, one deduces that $J_{Z_i}$ is projectable onto $\B$. Thus, there exist  $(1,1)$ tensors ~$\bar{J}_{Z_i}$ on~$\B$ such that for any basic vector field $X$ on $\M$, $\bar{J}_{Z_i} \bar{X}= \overline{J_{Z_i} X }$ where $\bar{X}$ denotes the projection of $X$ onto $\B$. Since the Bott connection projects onto the Levi-Civita connection, one deduces that the $\bar{J}_{Z_i}$ are parallel almost complex structures on $\B$. Therefore, if $m=1$ then $\B$ is K\"ahler and if $m \ge 2$, then $\B$ is locally hyper-K\"ahler. Let us now assume that $m \ge 4$. We want to show that $\B$ is flat. The argument is similar to \cite{MS},  proof of Theorem 2.9. We reproduce it in our setting for convenience of the reader. Since $\B$ is locally hyper-K\"ahler, it has to be Ricci flat. Let us first assume that $\B$ is irreducible. Then, from Berger-Simons classification theorem (see \cite{Besse}, page 300), $\B$ is either locally symmetric or its holonomy is included in $\SU(n/2)$, $\Sp (n/4)$ or $\Spin(7)$. If $\B$ is locally symmetric then it is flat due to the fact it is Ricci flat. On the other hand, it is impossible that the holonomy of $\B$ is included in  $\SU(n/2)$, $\Sp (n/4)$ or $\Spin(7)$ because $m \ge 4$ implies that the space of parallel two-forms on $\B$ has dimension at least 4 which is larger than the dimension of the centralizer of the Lie algebras of  $\SU(n/2)$, $\Sp (n/4)$ and $\Spin(7)$. One concludes that $\B$ is flat. If $\B$ is not irreducible, one can use the de Rham decomposition theorem to conclude as above.
\end{proof}

We note that the first case in the previous theorem corresponds to the case where $(\M, \Ho, g)$ is a Sasakian foliation and the last case  corresponds to H-type groups. The second case, when $m=3$ and $(\M, \Ho, g)$ is of quaternionic type corresponds to the hyper $f$-structures considered in \cite{Her}.

Since totally geodesic foliations with bundle-like metric are always locally described by a totally geodesic Riemannian submersion, one deduces the following corollary.

\begin{corollary}
Let  $(\M, \Ho, g)$ be an H-type foliation with  completely parallel torsion. If $m \ge 2$, then $\M$ is horizontally Ricci flat, i.e. $\ri_\Ho =0$ where $\ri_\Ho$ is the horizontal Ricci curvature of the Bott connection. If $m \ge 4$, then $\M$ is horizontally  flat, i.e. $R_\Ho=0$ where $R_\Ho$ is defined as in Lemma \ref{prop:TGCurvature}.
\end{corollary}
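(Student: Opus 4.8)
The plan is to reduce the statement about the H-type foliation $\M$ to the Riemannian submersion picture provided by Theorem~\ref{flat leaves}, and then transfer curvature information from the base $\mathbb B$ back to the horizontal Ricci curvature of the Bott connection on $\M$. First I would recall that a totally geodesic foliation with bundle-like metric is, locally around any point, given by a totally geodesic Riemannian submersion $\pi\colon U\to \mathbb B$, and that after passing to a cover one may assume $\mathbb B$ is simply connected. Since $(\M,\hor,g)$ has completely parallel torsion, it carries a parallel horizontal Clifford structure with $\Psi=0$, and by Theorem~\ref{Formula psi} the leaves are flat, so $\kappa=0$; thus the hypotheses of Theorem~\ref{flat leaves} are met on $U$. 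For $m\ge 2$ that theorem tells us $\mathbb B$ is locally hyper-K\"ahler, hence Ricci-flat; for $m\ge 4$ it tells us $\mathbb B$ is flat.

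The second ingredient is the relation between the curvature of the Bott connection on $\M$ and the curvature of the Levi-Civita connection of $(\mathbb B,h)$. Since $\pi$ is a Riemannian submersion with totally geodesic fibers, the Bott connection $\nabla$ restricted to horizontal basic vector fields projects to the Levi-Civita connection $\nabla^h$ of $\mathbb B$ (this was already used in the proof of Theorem~\ref{flat leaves}). Consequently, for horizontal vectors, $R_\hor(X,Y)Z$ (as defined in Lemma~\ref{prop:TGCurvature}) is exactly the pullback of the Riemann tensor $R^h$ of $\mathbb B$ along $\pi_*$, since $R_\hor$ only sees the horizontal components and the fibers are totally geodesic. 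Taking horizontal traces, $\mathbf{Ric}_\hor$ at a point $p$ equals the pullback of $\mathrm{Ric}^h$ at $\pi(p)$. Therefore Ricci-flatness of $\mathbb B$ (the case $m\ge 2$) yields $\mathbf{Ric}_\hor=0$, and flatness of $\mathbb B$ (the case $m\ge 4$) yields $R_\hor=0$. Since these are pointwise tensorial statements and the submersion description is valid in a neighborhood of every point of $\M$, the conclusion holds globally on $\M$.

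The main obstacle, and the only place requiring genuine care, is justifying cleanly that $R_\hor$ on $\M$ coincides with the pullback of $R^h$ on $\mathbb B$: one must check that the O'Neill-type correction terms do not contaminate the purely horizontal part of the Bott curvature. Here the key point is that the Bott connection is \emph{not} the Levi-Civita connection of $g$ on $\M$ but is precisely engineered so that $\pi_{\hor}(\nabla^g_XY)=\nabla_XY$ for horizontal $X,Y$, and that the leaves being totally geodesic means the vertical O'Neill tensor vanishes; combined with Lemma~\ref{prop:TGCurvature}, which isolates $R_\hor$, $R_\ver$ and the $(\nabla T)$-term, one sees that $R_\hor$ computed with $\nabla$ descends to $R^h$. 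A short computation using that basic fields project and that $\nabla$ projects to $\nabla^h$ completes this verification. Once that is in hand, the corollary is immediate from Theorem~\ref{flat leaves}.
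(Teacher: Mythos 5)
Your proposal is correct and follows exactly the route the paper intends: the corollary is deduced from Theorem \ref{flat leaves} by observing that a totally geodesic foliation with bundle-like metric is locally a totally geodesic Riemannian submersion, and that the Bott connection (hence $R_\hor$ and $\ri_\hor$) projects to the Levi-Civita connection (hence the curvature and Ricci tensor) of the base. Your added verification that $\nabla_V Z=0$ for $V$ vertical and $Z$ basic, so that no O'Neill-type corrections enter $R_\hor$, is a correct and welcome filling-in of a step the paper leaves implicit.
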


\subsection{Parallel horizontal Clifford structures and curvature constancy}\label{s:curvature constancy}

In this section, we show how  H-type foliations with a parallel horizontal Clifford structure  can be obtained from totally geodesic  Riemannian or semi-Riemannian foliations associated with curvature constancy.  Conversely,  all H-type foliations with a parallel Clifford structure arise in this way, up to rescaling the metric in the vertical direction (which does not change the intrinsic geometry of the corresponding sub-Riemannian structure $g_\Ho$). Using  a result from \cite{MS}, this will yield a classification of simply connected  H-type foliations with a parallel horizontal Clifford structure coming from a Riemannian submersion. Let $(\M,g)$ be a semi-Riemannian manifold. Denote by $R^g$ its Riemannian curvature tensor (for the Levi-Civita connection). Following \cite{Gray}, we give the following definition.
\begin{definition} For $\rho \in \R$, the $\rho$-\emph{curvature constancy} of $(\M,g)$ is the distribution given by
\begin{equation}
\calC_p (\rho, g)=\left\{ v \in T_p\M \mid R^g(v,x)y = \rho \left(\langle x,y \rangle_g v -\langle v,y\rangle_g x \right)\quad \forall\, x,y \in T_p \M \right\}, \quad \forall\, p \in \M.
\end{equation}
\end{definition}
As proved in \cite{Gray}, assuming that $ \rank \ \calC_p (\rho, g)$ is constant and $\geq 1$, the $\rho$-curvature constancy is an integrable distribution and the leaves of the corresponding foliation are totally geodesic. If we further assume that the metric is bundle-like along $\calC_p (\rho, g)$, letting $\Ho = \calC_p (\rho, g)^\perp$, we have that $(\M,\Ho,g)$ is a totally geodesic foliation in the sense of Section \ref{s:setting}.  We have then the following theorem:

\begin{theorem}\label{t:curvature constancy1}
Let $(\M,\Ho,g)$ be a totally geodesic foliation with vertical distribution $\V$.  Let $\kappa \neq 0$. The following are equivalent:
\begin{enumerate}
\item  $(\M,\Ho,g)$ is an H-type foliation with parallel horizontal Clifford structure such that for every $Z,W \in \s{\V}$, $(\nabla_{Z} J)_{W} =J_{\Psi(Z,W)}$,
with $\Psi(Z,W)=-\kappa (Z \cdot W +\langle Z ,W \rangle )$.
\item $\V_p \subset \calC_p \left(\frac{\kappa}{2}, g_\Ho \oplus 2\kappa g_\V  \right)$, $\forall p \in \M$.
\end{enumerate}
\end{theorem}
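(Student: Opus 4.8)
The plan is to prove the equivalence by a direct computation of the Riemannian curvature tensor $R^{\hat g}$ of the rescaled metric $\hat g = g_\Ho \oplus 2\kappa g_\V$ restricted to vertical directions, and to match it against the curvature-constancy condition. The key observation is that for a totally geodesic foliation, rescaling the metric in the vertical direction by a constant factor does not change the Bott connection, nor does it change the Levi-Civita connection restricted to the leaves (which remains the intrinsic one), while it does change the relation between the $J$-map of the Bott connection and the O'Neill-type tensors of the submersion. Concretely, I would first record the O'Neill formulas for a totally geodesic Riemannian submersion expressing $R^{\hat g}$ in terms of: (i) the curvature of the leaves, (ii) the horizontal curvature of the base, and (iii) the fundamental tensor $A$, which here is governed by the $J$-map. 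Since the leaves are totally geodesic, the second fundamental form vanishes and the relevant O'Neill identity for $v,x,y\in\V$ reduces to $\langle R^{\hat g}(v,x)y,w\rangle = \langle R^{\ell}(v,x)y,w\rangle + (\text{terms in } A)$, where $R^\ell$ is the leaf curvature.

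Next I would carry out the following chain of identifications. From Theorem \ref{Formula psi}, condition (1) is equivalent to: $(\M,\hor,g)$ is H-type with horizontally parallel torsion, the $J$-map satisfies $(\nabla_Z J)_W = -\kappa J_{Z\cdot W}$ for vertical $Z\perp W$ (up to the symmetric correction), and the leaves have constant sectional curvature $\kappa^2$ in the metric $g$; equivalently constant sectional curvature $\kappa^2/(2\kappa) = \kappa/2$ in the rescaled metric $2\kappa g_\V$. So the "leaf part" of $R^{\hat g}$ restricted to $\V$ already contributes exactly the $\frac{\kappa}{2}$-constant-curvature term $\frac{\kappa}{2}(\langle x,y\rangle v - \langle v,y\rangle x)$ whenever $x,y,v$ are all vertical. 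The remaining task is to show that when at least one of the slots is horizontal, the H-type condition (and the Clifford-structure identity) forces $R^{\hat g}(v,x)y$ to vanish for $v\in\V$ in precisely the pattern demanded by $v\in\mathcal C_p(\tfrac{\kappa}{2},\hat g)$, namely: $R^{\hat g}(v,x)y = \tfrac{\kappa}{2}(\langle x,y\rangle v - \langle v,y\rangle x)$ for all $x,y$. The mixed terms come from the $A$-tensor, and the H-type normalization $\langle J_ZX,J_ZY\rangle = \|Z\|^2\langle X,Y\rangle$ together with $T(X,J_WX)=\|X\|^2 W$ is exactly what makes the $A$-tensor contribution collapse to a multiple of the identity-type curvature; the factor $2\kappa$ in the vertical rescaling is chosen precisely so that this multiple is $\kappa/2$. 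I would use Lemma \ref{prop:TGCurvature} and Lemma \ref{vertical sectional curvature} to handle the Bott-curvature bookkeeping and the conversion between $\nabla$ and $\nabla^{\hat g}$.

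For the converse direction (2) $\Rightarrow$ (1), I would run the same computation backwards: assuming $\V_p\subset\mathcal C_p(\tfrac{\kappa}{2},\hat g)$, the leaf sectional curvature in $\hat g$ is forced to be $\kappa/2$, hence in $g$ it is $\kappa^2$, which via Lemma \ref{vertical sectional curvature} gives $\|(\nabla_Z J)_W X\|^2 = \kappa^2\|X\|^2$ for orthonormal vertical $Z,W$; this is the quantitative input that upgrades a totally geodesic foliation to an H-type one with the stated $\Psi$. The vanishing of the mixed curvature components of $R^{\hat g}$ (part of the curvature-constancy hypothesis) translates, through the O'Neill formulas, into the parallelism $\nabla_\Ho J = 0$ and into the Clifford identity $(\nabla_Z J)_W = J_{\Psi(Z,W)}$ with $\Psi$ as in \eqref{eq:psi}; I expect one also needs Theorem \ref{Formula psi} (or a direct argument) to see that $\kappa$ is globally constant, but since it is fixed in the statement this is automatic.

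The main obstacle I anticipate is the careful translation between three connections — the Levi-Civita connection $\nabla^g$, the rescaled Levi-Civita connection $\nabla^{\hat g}$, and the Bott connection $\nabla$ — and in particular tracking how the factor $2\kappa$ propagates through the O'Neill $A$-tensor terms and through the identification $\wedge^2\V \cong \mathbf{Cl}_2(\V)$. A clean way to organize this is to fix a point $p$, choose an adapted frame with $\nabla_Z Z = \nabla_Z W = 0$ along a leaf geodesic as in the proof of Lemma \ref{vertical sectional curvature}, reduce every curvature expression to one of the three pure types (vertical-vertical-vertical, mixed, horizontal-horizontal applied to vertical output), and verify the constancy condition type by type; only the mixed and the vertical-output-horizontal-input cases require the H-type hypothesis, and those are exactly where the normalization \eqref{jhgfr} does the work.
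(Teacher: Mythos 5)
Your overall strategy -- compute the curvature of the canonical variation $\hat g=g_\Ho\oplus 2\kappa\, g_\V$ via O'Neill-type formulas in terms of the Bott connection, $T$ and $J$, and match the result component-by-component against the curvature-constancy condition -- is exactly the paper's route (the paper packages the O'Neill computation as Lemma \ref{ONeill} and then sets $\ve=\tfrac{1}{2\kappa}$). The direction $1.\Rightarrow 2.$ as you outline it is essentially the paper's argument.

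However, your $2.\Rightarrow 1.$ direction has a genuine gap: you never actually derive the H-type condition \eqref{jhgfr}. You propose to get it from the leaf sectional curvature being $\kappa^2$ ``via Lemma \ref{vertical sectional curvature}'', but that lemma \emph{presupposes} an H-type foliation with horizontally parallel torsion, so invoking it here is circular; and in any case knowledge of $\langle R(Z,W)W,Z\rangle$ on the leaves is a statement about the vertical curvature and cannot produce the pointwise algebraic identity $J_Z^2=-\|Z\|^2\mathbf{Id}_\Ho$, which concerns only the torsion. You also misread the hypothesis when you describe the mixed components as ``vanishing'': for $V\in\s{\V}$ and $X,Y\in\s{\Ho}$ curvature constancy asserts $R^{\hat g}(V,X)Y=\tfrac{\kappa}{2}\langle X,Y\rangle\,V$, a \emph{nonzero} vertical vector, and it is precisely this value that encodes the H-type normalization. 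The missing step is: symmetrize the O'Neill expression for $\langle R^{\hat g}(V,X)Y,W\rangle_{\hat g}$ in the two horizontal slots $X,Y$ (with $V,W$ vertical). The terms involving $\nabla_V T$ and $(\nabla_X J)_V$ drop out or are controlled, and the $A$-tensor term $\tfrac{1}{4\ve}T(X,J_VY)$ contributes $-\tfrac{1}{4\ve^2}\langle (J_VJ_W+J_WJ_V)X,Y\rangle$; equating this with the symmetrized curvature-constancy value $\kappa\langle X,Y\rangle\langle V,W\rangle_{\hat g}$ at $\ve=\tfrac{1}{2\kappa}$ yields $J_VJ_W+J_WJ_V=-2\langle V,W\rangle\mathbf{Id}_\Ho$, i.e.\ the H-type condition. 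Only after this is established (together with $\nabla_\Ho J=0$, which does come from the vanishing of the \emph{horizontal} part of $R^{\hat g}(V,X)Y$) can one read off $(\nabla_VJ)_W=J_{\Psi(V,W)}$ from the remaining mixed component. Without this step your backward implication does not close.
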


\begin{Remark}\label{scaling inverse}
One can equivalently rewrite Theorem \ref{t:curvature constancy1} as follows. Let $(\M,\Ho,g)$ be a totally geodesic foliation with vertical distribution $\V$.  Let $K \neq 0$. The following are equivalent:
\begin{enumerate}
\item $\V_p \subset \calC_p \left(K, g  \right)$, $\forall p \in \M$.
\item  $(\M,\Ho, g_\Ho \oplus \frac{1}{4K} g_\V )$ is an H-type foliation with parallel horizontal Clifford structure for which $\kappa =2K$.
\end{enumerate}
\end{Remark}
As a preliminary for the proof of Theorem \ref{t:curvature constancy1}, we first rewrite O'Neill's formulas using the notations of this paper.

\begin{lemma}\label{ONeill}
Let $(\M,\Ho,g)$ be a totally geodesic foliation. Let us consider  the canonical variation of $g$, i.e. the one-parameter family of (semi-)Riemannian metrics defined $g_{\varepsilon}=g_\Ho \oplus  \frac{1}{\varepsilon }g_{\V},  \varepsilon \neq 0$. Let $R^{g_\ve}$ denote the Riemannian curvature of the Levi-Civita connection for $g_\ve$. Then, for every $V \in \s\V$,
$$R^{g_\ve}(V,X) Y = \left\{ \begin{array}{ll}
- \frac{1}{2} (\nabla_V T)(X,Y) - \frac{1}{2\ve} (\nabla_X J)_V Y + \frac{1}{4\ve} T(X, J_V Y ) & \text{if $X,Y \in \s\Ho$,} \\ \\
R_\V(V,X) Y &  \text{if $X,Y \in \s\V$}.
\end{array} \right.$$
\end{lemma}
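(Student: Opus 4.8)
The plan is to compute $R^{g_\ve}$ for the canonical variation directly from O'Neill's submersion formulas, rewritten in terms of the Bott connection $\nabla$ and its torsion $T$. Recall that for a totally geodesic foliation the Levi-Civita connection $\nabla^{g_\ve}$ differs from $\nabla$ by explicit terms built from $T$ and the $J$-maps; concretely, for horizontal $X,Y$ one has $\nabla^{g_\ve}_X Y = \nabla_X Y + \tfrac{1}{2}T(X,Y)$ (the vertical O'Neill tensor $A$, which for a Riemannian foliation equals $\tfrac12 T$ on horizontal pairs up to the $\ve$-scaling), while mixing a vertical $V$ with a horizontal $X$ produces the terms governed by $J_V$. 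The totally geodesic hypothesis kills the second O'Neill tensor entirely, which is what makes both cases clean.

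First I would set up the correction tensor: write $\nabla^{g_\ve} = \nabla + S^\ve$, identify $S^\ve$ on each of the four combinations of horizontal/vertical arguments using the bundle-like and totally geodesic conditions of Section \ref{s:setting} together with the defining relation \eqref{Jmap} for $J$ and the torsion formula $T(X,Y) = -\pi_\V([\pi_\Ho X,\pi_\Ho Y])$. Then I would plug $\nabla^{g_\ve} = \nabla + S^\ve$ into the curvature formula $R^{g_\ve}(A,B)C = R(A,B)C + (\nabla_A S^\ve)(B,C) - (\nabla_B S^\ve)(A,C) + S^\ve_A S^\ve_B C - S^\ve_B S^\ve_A C + S^\ve_{T(A,B)}C$ and specialize to $A = V$ vertical. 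For the case $X,Y\in\Gamma(\V)$, almost everything involving $S^\ve$ with a vertical slot either vanishes or contributes only to $R_\V$ after using that the leaves are totally geodesic and that $T$ vanishes on vertical pairs, and one should get $R^{g_\ve}(V,X)Y = R_\V(V,X)Y$ outright, noting that on the leaves $\nabla$ and $\nabla^{g_\ve}$ both restrict (up to the constant rescaling, which does not change curvature) to the leaf Levi-Civita connection. For the case $X,Y\in\Gamma(\Ho)$ I would carefully collect the surviving terms: $R(V,X)Y$ itself equals $(\nabla_Y T)(V,X)$-type expressions but more to the point, invoking Lemma \ref{prop:TGCurvature} (valid here since totally geodesic foliations arising this way have the needed structure — or rather, one argues at the level of the raw O'Neill identity without assuming $\nabla_\hor T = 0$), the horizontal-horizontal-into-vertical and mixed pieces combine to $-\tfrac12(\nabla_V T)(X,Y) - \tfrac{1}{2\ve}(\nabla_X J)_V Y + \tfrac{1}{4\ve} T(X,J_V Y)$.

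The main obstacle I anticipate is bookkeeping: getting every factor of $\tfrac12$, $\tfrac{1}{2\ve}$, $\tfrac{1}{4\ve}$ correct, since the $\ve$-rescaling of the vertical metric makes the O'Neill tensor $A$ scale inhomogeneously (the $A_X V$ term picks up a different power of $\ve$ than $A_X Y$ because raising/lowering a vertical index costs a factor $\ve$). A safe route is to fix a local orthonormal frame adapted to $g_1 = g$, rescale explicitly to $g_\ve$, and track the Christoffel symbols; alternatively, cite the standard canonical-variation computation (this is essentially in \cite{BaudoinEMS2014} and the O'Neill identities of \cite{Besse}, Chapter 9) and just translate notation. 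I would present the proof as: (i) recall O'Neill's formulas for a Riemannian submersion with totally geodesic fibers; (ii) observe $A_X Y = \tfrac12 T(X,Y)$ and $A_X V = -\tfrac12 J_V X$ in our notation, with the vertical O'Neill tensor zero; (iii) substitute into O'Neill's curvature identities $\langle R^{g_\ve}(V,X)Y,\cdot\rangle$ for the two cases and simplify, using \eqref{Jmap} to convert $A$-terms into $T$- and $J$-terms and using Lemma \ref{prop:TGCurvature} (or the raw Bianchi argument) to rewrite $R(V,X)Y$ into $(\nabla_V T)(X,Y)$ when $X,Y$ are horizontal. The vertical case is then immediate and the horizontal case yields the claimed three-term expression.
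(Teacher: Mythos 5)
Your proposal follows essentially the same route as the paper: write $\nabla^{g_\ve}$ as the Bott connection plus an explicit correction built from $T$ and $J$, identify the O'Neill tensor of the canonical variation (the paper records $A^\ve_X Y=-\tfrac12 T(X,Y)+\tfrac{1}{2\ve}J_Y X$), and then invoke O'Neill's curvature formulas from \cite{Besse}, Theorem 9.28. The only caveat is the sign bookkeeping you already flag: with the paper's convention $T(X,Y)=-\pi_\V([X,Y])$ one has $A_XY=-\tfrac12 T(X,Y)$ on horizontal pairs, not $+\tfrac12 T(X,Y)$, and no appeal to Lemma \ref{prop:TGCurvature} is needed since the mixed curvature comes directly out of the O'Neill identity.
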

\begin{proof}
We note that the Levi-Civita connection of the (semi)-Riemannian metric $g_\ve$, is given by
\begin{equation*}
\nabla_X^{g_\ve} Y = \nabla_X Y - \frac{1}{2} T(X, Y) + \frac{1}{2\ve} J_XY + \frac{1}{2\ve} J_Y X  \quad X,Y \in \s{T\M}.
\end{equation*}
We  can then either proceed by direct (but lengthy) computations or use the O'Neill's formulas (Theorem 9.28\footnote{Note that \cite{Besse} uses the opposite sign convention for the Riemannian curvature tensor} in \cite{Besse}) noting that the O'Neill's tensor $A^\ve$ of the totally geodesic foliation  $(\M,\Ho,g_\ve)$ is given by
\begin{equation*}
A^\ve_X Y=- \frac{1}{2} T(X,Y) + \frac{1}{2\ve} J_Y X, \quad X,Y \in \s{T\M}. \qedhere
\end{equation*}
\end{proof}

\begin{proof}[Proof of Theorem \ref{t:curvature constancy1}] 
$2. \implies 1$. Let $\ve=\frac{1}{2\kappa}$ and denote the inner product $g_\ve$ by $\left \langle \cdot,\cdot \right\rangle_\ve$. For $X,Y \in \s{\Ho}$ and $V,W \in \s{\V}$, one has from Lemma \ref{ONeill}
\begin{align*}
- \langle R^{g_\ve} (V,X)Y +R^{g_\ve}(V,Y)X, W \rangle_\ve &= -\frac{1}{4\ve} \langle T(X,J _VY)+T(Y,J_V X),W\rangle_\ve \\
 & =\frac{1}{4\ve^2}  \langle (J_V J_W+J_W J_V) X,Y \rangle.
\end{align*}
Using 2. we therefore obtain $J_V J_W+J_W J_V=-4\ve^2 \kappa\langle V,W\rangle_\ve \Id_\Ho=-2 \langle V,W\rangle \Id_\Ho$, which implies that $(\M,\Ho,g)$ is an  H-type foliation.  We now prove that $T$ is horizontally parallel.  From Lemma \ref{ONeill}, one has
\[
 - \frac{1}{2\ve} (\nabla_X J)_W Y =\pi_\Ho \left( R^{g_\ve}(W,X) Y \right)=0.
\]
Therefore, $(\nabla_X J)_W X =0$ which implies that $T$ is horizontally parallel.  It remains to compute $(\nabla_V J)_W$. This can be done by using once again Lemma \ref{ONeill}. Indeed,
\begin{align}
-\langle R^{g_\ve}(X,V)Y,W \rangle_\ve =-\frac{1}{2\ve} \langle (\nabla_V J)_W X, Y \rangle +\frac{1}{4\ve^2} \langle J_W X,J_V Y \rangle. \label{eq:scaling}
\end{align}
Therefore, using 2., we have
\begin{equation*}
 \langle (\nabla_V J)_W X, Y \rangle =-\tfrac{1}{2\ve} \langle J_V J_W X, Y \rangle - \kappa \langle X,Y\rangle \langle V,W\rangle ,
\end{equation*}
and the proof is complete since $\ve=\frac{1}{2\kappa}$.

$1. \implies 2$.  Let $\ve >0$. From Lemma  \ref{ONeill} we have for $X,Y \in \s{\Ho}$ and $V,W \in \s{\V}$,
\begin{align*}
\langle R^{g_\ve} (X,V)Y,W \rangle_\ve & =\frac{1}{2\ve} \langle (\nabla_V J)_W X,Y \rangle -\frac{1}{4\ve^2} \langle J_W X , J_V Y \rangle \\
&=-\frac{\kappa}{2\ve} \langle J_V J_W X,Y \rangle -\frac{\kappa}{2\ve} \langle  X,Y \rangle \langle  V,W \rangle -\frac{1}{4\ve^2} \langle J_W X , J_V Y \rangle \\
&=\left( \frac{\kappa}{2\ve}-\frac{1}{4\ve^2}\right) \langle J_W X , J_V Y \rangle-\frac{\kappa}{2\ve} \langle  X,Y \rangle \langle  V,W \rangle.
\end{align*}
Thus, if $\ve = \frac{1}{2 \kappa}$, one has
\begin{align*}
\langle R^{g_\ve} (X,V)Y,W \rangle_\ve =-\frac{1}{4\ve^2} \langle  X,Y \rangle \langle  V,W \rangle =-\frac{1}{4\ve} \langle  X,Y \rangle \langle  V,W \rangle_\ve.
\end{align*}
On the other hand, still by Lemma  \ref{ONeill}  and $\nabla_\Ho J=0$, one has $\pi_\Ho (R^{g_\ve} (X,V)Y)=0$, thus
\begin{equation*}
R^{g_\ve} (V,X)Y=\frac{1}{4\ve} \langle  X,Y \rangle_\Ho  V=\frac{\kappa}{2} \langle  X,Y \rangle_\Ho  V.
\end{equation*}
Then, from Theorem~\ref{Formula psi} and Lemma  \ref{ONeill}, we have that for $X, Y \in \s\V$, 
$$R^{g_\ve}(V, X) Y= R_\V(V, X) Y = \kappa^2 (\langle X, Y \rangle V - \langle V, Y\rangle X).$$
Using then the symmetries of the Riemannian curvature tensor $R^{g_\ve}$ and Bianchi's identity one concludes that for every $V \in \s{\V}$ and $X,Y \in \s{T\M}$,
\[
R^{g_\ve} (V,X)Y = \frac{\kappa}{2}  \left( \langle X, Y \rangle_\ve V - \langle V, Y \rangle_\ve X  \right). \qedhere
\]
\end{proof}

This theorem allows to construct many examples of H-type foliations with parallel horizontal Clifford structures coming from a submersion. In particular, we point out the following corollary:

\begin{corollary}\label{parallel even}
Let $(\M,g)$ be a Riemannian manifold that carries a rank $m+1$ parallel non flat  even Clifford structure in the sense of Moroianu-Semmelmann \cite{MS}. Then, if $n \neq 8$, the sphere bundle of this  structure is an H-type foliation with horizontal parallel Clifford structure for which $\kappa >0$.
\end{corollary}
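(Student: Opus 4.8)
The plan is to realize the sphere bundle of the parallel even Clifford structure as a totally geodesic Riemannian foliation, identify its vertical distribution with a curvature-constancy distribution, and then quote Theorem \ref{t:curvature constancy1}. Recall that a rank $m+1$ even Clifford structure on $(\M,g)$ (in the sense of Moroianu-Semmelmann) consists of a rank $m+1$ Euclidean bundle $E \to \M$ together with an algebra bundle morphism $\varphi : \mathbf{Cl}^0(E) \to \mathrm{End}(T\M)$ sending $\Lambda^2 E$ to skew-symmetric endomorphisms; parallel means $\varphi$ intertwines the canonical connection on $\mathbf{Cl}^0(E)$ (induced by a metric connection $\nabla^E$) with the Levi-Civita connection on $T\M$. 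First I would pass to the sphere bundle $\M^E := \{ e \in E : |e|_E = 1\}$ with its natural projection $p : \M^E \to \M$. It carries a Riemannian metric: the bundle metric $g^E = p^* g \oplus g^{\text{fiber}}$ where the fiber metric comes from the round metric on the unit spheres $\mathbb S^m \subset E_x$ and the horizontal/vertical splitting is the one determined by $\nabla^E$. Because $\nabla^E$ is metric, this is a Riemannian submersion with totally geodesic fibers (the fibers are round spheres, and parallel transport by $\nabla^E$ preserves them); hence, setting $\V := \ker dp$ and $\Ho := \V^\perp$, we get a totally geodesic foliation $(\M^E, \Ho, g^E)$ in the sense of Section \ref{s:setting}. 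The rank of $\V$ is $m$.

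Next I would compute the curvature of the leaves and the $J$-map. Since each fiber is a round unit $m$-sphere and the fibers are totally geodesic, the leaves have constant sectional curvature $1$; in particular $\V_q \subset \mathcal C_q(1, g^E)$ as far as the purely vertical directions are concerned. The content is in the mixed terms. Here I would use the defining property of a parallel even Clifford structure together with the O'Neill-type formulas of Lemma \ref{ONeill}: the pullback $p^*\varphi$ applied to the tautological section identifies the Bott-connection $J$-map on $\Ho$ with the Clifford action, and the "parallel" hypothesis feeds directly into the computation of $R^{g^E}(V,X)Y$ for $V \in \Gamma(\V)$, $X,Y \in \Gamma(\Ho)$. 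After rescaling the vertical metric if necessary (which, as emphasized in Remark \ref{normalization}, does not change the sub-Riemannian geometry), one arranges the H-type normalization $\langle J_Z X, J_Z Y\rangle = \|Z\|^2 \langle X, Y\rangle$ — this uses that $\varphi$ is an algebra morphism, so $J_Z^2 = -\|Z\|^2 \mathbf{Id}$, and the assumption $n \neq 8$ to rule out the degenerate normalizations. What one obtains is precisely the identity $\V_q \subset \mathcal C_q\!\left(\frac{\kappa}{2}, g_\Ho \oplus 2\kappa g_\V\right)$ for the appropriate $\kappa > 0$ (positivity because the fibers are genuine round spheres, so the constant coming out is strictly positive — this is where "non flat" enters, ensuring $\kappa \neq 0$, and the sign of the sphere curvature forces $\kappa > 0$). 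Then Theorem \ref{t:curvature constancy1}, direction $2. \implies 1.$, gives at once that $(\M^E, \Ho, g^E)$ (with the rescaled vertical metric) is an H-type foliation with parallel horizontal Clifford structure and $\kappa > 0$.

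The main obstacle I anticipate is the bookkeeping in the middle step: correctly translating Moroianu-Semmelmann's "parallel even Clifford structure" data into the totally geodesic foliation language of this paper, and in particular verifying that the canonical connection $\nabla^E$ on $E$ matches the Bott connection restricted to $\V$ under the identification $\V \cong p^*E$, so that "$\nabla^E$-parallel" and "$\nabla_\Ho J = 0$" say the same thing. Once that dictionary is set up, the curvature identity is a direct consequence of the parallelism hypothesis plus Lemma \ref{ONeill}, and everything else is an application of Theorem \ref{t:curvature constancy1}. The role of the hypothesis $n \neq 8$ deserves a careful remark: for $n = 8$ the Clifford module structures on $\mathbb R^8$ behave anomalously (the relevant representations are not of the "even-dimensional type" needed for the uniform H-type normalization), so that case must genuinely be excluded rather than merely rescaled.
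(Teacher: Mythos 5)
Your proposal follows essentially the same route as the paper: the paper's entire proof consists of invoking Theorem \ref{t:curvature constancy1} together with Theorem 3.6 of \cite{MS}, and your middle step (realizing the sphere bundle as a totally geodesic foliation whose vertical distribution lies in the curvature constancy) is precisely the content of that cited theorem, so the argument goes through. The one quibble is your explanation of the hypothesis $n \neq 8$: the obstruction there is triality for $\mathbf{Spin}(8)$, which breaks the curvature identity of \cite{MS} relating $R^E$ to the Riemann tensor, rather than any degeneracy of the H-type normalization itself.
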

\begin{proof}
This follows from Theorem \ref{t:curvature constancy1} and \cite[Theorem 3.6]{MS}. We note that condition (b) of \cite[Theorem 3.6]{MS} is satisfied thanks to \cite[Proposition 2.10]{MS}.
\end{proof}

We note that parallel even Clifford structures are classified in Theorem 2.14 in \cite{MS}. Because of triality, the case $n=8$ is special and  sphere bundles over 8-dimensional manifolds that carry parallel even Clifford structures do not necessarily yield  H-type foliations with horizontal parallel Clifford structure. We are now  in position to justify Table \ref{Table 1}  of the introduction. Indeed A. Moroianu and U. Semmelmann proved the following very nice result:

\begin{theorem}\cite[Theorem 3.7]{MS} 
There exists  a Riemannian submersion from a complete simply connected Riemannian manifold $\M$ to a complete simply connected Riemannian manifold $\B$ whose vertical distribution belongs to the curvature constancy of $\M$, if and only if the couple $(\M, \B)$ appears in the Table \ref{Table 1} of the introduction.
\end{theorem}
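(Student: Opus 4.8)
This theorem is the transcription of \cite[Theorem 3.7]{MS} into the language of the present paper, so the plan is to quote Moroianu--Semmelmann and to verify that the entries of Table \ref{Table 1} reproduce faithfully their list. I would organize the argument they give in three steps, keeping track of the normalization conventions fixed in Lemma \ref{ONeill} and Remark \ref{scaling inverse}.

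\textbf{Step 1 (from a submersion to a Clifford structure on the base).} Suppose $\pi:\M\to\mathbb{B}$ is a Riemannian submersion with totally geodesic fibers whose vertical distribution $\V$ lies in $\mathcal{C}_p(\rho,g)$ for a constant $\rho>0$. Applying O'Neill's formulas --- in the form of Lemma \ref{ONeill} with $\ve=1$, so that $g_\ve=g$ --- the curvature-constancy hypothesis forces the skew-symmetric endomorphisms $J_Z$, $Z\in\V$, to be $\pi$-related to tensors $\bar J_Z$ on $\mathbb{B}$ which are parallel for the Levi-Civita connection of $h$ and satisfy the Clifford relations up to the scaling dictated by $\rho$. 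After rescaling, these assemble into a parallel even Clifford structure of rank $\dim\V+1$ on $\mathbb{B}$, non-flat precisely because $\rho>0$; the fibers of $\pi$ are round spheres or real projective spaces, the choice being governed by whether this even Clifford structure lifts to a genuine Clifford structure.

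\textbf{Step 2 (classification on the base) and Step 3 (converse).} One then invokes the Moroianu--Semmelmann classification of parallel even Clifford structures on complete simply connected Riemannian manifolds \cite[Theorem 2.14]{MS}: these are the structures on Grassmannians, on quaternion-K\"ahler manifolds, and on the relevant exceptional symmetric spaces, together with the low-rank cases ($m=1,2,3$) coming from K\"ahler and hyper-K\"ahler geometry. For each admissible $\mathbb{B}$ one reconstructs $\M$ as the corresponding sphere (resp.\ projective) bundle of the structure; completeness and simple connectedness of $\mathbb{B}$, together with the totally geodesic spherical fibers, pin down $\M$ up to isometry, and tabulating the outcome reproduces Table \ref{Table 1}. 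Conversely, each line of that table is a homogeneous or symmetric fibration, and a direct computation with the isotropy representation --- or an appeal to the explicit models in \cite{MS} --- shows that its vertical distribution sits inside the curvature constancy with $\rho>0$.

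\textbf{Main obstacle.} The genuinely delicate points, both internal to \cite{MS}, are global in nature: deciding, for a given parallel even Clifford structure on $\mathbb{B}$, whether the total space has spherical or projective fibers --- i.e.\ resolving the lifting obstruction of the even Clifford structure to an honest Clifford structure --- and handling the triality phenomenon in rank $8$, which is what splits the two entries $\mathbf{SO}(k+8)/(\mathbf{SO}(k)\times\mathbf{Spin}(7))$ and $\mathbf{Spin}(k+8)/(\mathbf{SO}(k)\times\mathbf{Spin}(7))$ according to the parity of $k$. In our setting this subtlety is already visible in the exclusion $n\neq 8$ appearing in Corollary \ref{parallel even}, which is why the table must be imported from \cite{MS} rather than re-derived purely from Theorem \ref{t:curvature constancy1}.
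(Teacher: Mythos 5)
Your proposal matches the paper's treatment: the theorem is imported verbatim from \cite[Theorem 3.7]{MS} and the paper supplies no independent proof, so quoting Moroianu--Semmelmann and checking that Table \ref{Table 1} faithfully transcribes their list is exactly what is done here. Your additional outline of the internal structure of their argument (inducing a parallel even Clifford structure on the base, classifying via their Theorem 2.14, reconstructing the total space, and the spherical-versus-projective fiber and triality subtleties) is accurate but goes beyond what the paper itself records.
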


Table \ref{Table 2} of the introduction is then obtained from Table \ref{Table 1} by using the non-compact Cartan duals of the compact symmetric spaces appearing in \ref{Table 1}. Justifying Table \ref{Table 2} requires the semi-Riemannian counterpart of \cite[Theorem 3.7]{MS} which is proved in a similar way. For further details, we refer to the comments after  Theorem 3.7,  Page 965 in \cite{MS} and to the Footnote 1, Page 955 in \cite{MS}.

\subsection{Horizontal Einstein property}
In this section, we prove the following theorem:
\begin{theorem}\label{horizontal einstein}
Let $(\M, g, \Ho)$ be an H-type foliation with a parallel  horizontal Clifford structure with  $m \ge 2$  such that:
\[
\Psi (u,v) =-\kappa ( u \cdot v +\langle u, v \rangle), \quad u,v \in \V,
\]
with $\kappa \in \R$, then:
\begin{itemize}
\item If $m \neq 3$, $\ri_\Ho = \frac{\kappa}{4} \left( n + 8 (m-1) \right)g_\Ho$.
\item If $m = 3$, then at any point $\Ho$ orthogonally splits as a direct sum $\Ho^+ \oplus \Ho^-$ and for $X,Y \in \s{\Ho}$ and
\[
\ri_\Ho (X,Y) =  \frac{\kappa}{4} \Big(( n + 8) \langle  X, Y \rangle + (\dim \Ho^+-\dim \Ho^-)\langle \sigma( X) ,  Y \rangle \Big), \]
where $\sigma =\Id_{\Ho^+} \oplus (- \Id_{\Ho^-})$. 
\item If $m = 3$ and moreover $(\M, g, \Ho)$ is of quaternionic type then $\Ho^+ = \Ho, \Ho^- = \{0\}$, and thus $\ri_\Ho = \frac{\kappa}{2} \left( n + 8 \right)g_\Ho$.
\end{itemize}
\end{theorem}

\begin{remark}
For $m = 3$, $\dim \Ho^+$ and $\dim \Ho^-$ are independent of the point where they are computed. Indeed, the proof will show that $\nabla_\Ho \sigma =0$ and that both $ \Ho^+$ and $ \Ho^-$ are parallel along horizontal curves.
\end{remark}
In particular, if $m\neq 3$, then $(\M, g, \Ho)$ is always horizontally Einstein.
In the case $m=2$, the fact that $(\M, g, \Ho)$ is horizontally Einstein  is related to the fact that quaternion K\"ahler manifolds are Einstein manifolds (see Berger \cite{Ber1},  Ishihara  \cite{Ish} or Theorem 14.39 in Besse \cite{Besse}), and the algebraic structure of our  proof below somehow parallels the one of Ishihara and Besse (in the choice of a special horizontal basis).  The key lemma is the following:

\begin{lemma} \label{prop:J-symmetry}
Let $(\M, \Ho,g)$ be a totally geodesic foliation with $\nabla_\Ho T=0$. For any $X, Y \in \s\Ho$ and $Z \in \s\V$, we have
\begin{equation}
 \big[ R_\Ho(X,Y) , J_Z \big]=(\nabla_{T(X,Y)} J)_Z +J_{(\nabla_Z T)(X,Y)}.
 \end{equation}
\end{lemma}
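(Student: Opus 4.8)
The plan is to prove the commutator identity
$$
\big[R_\ch(X,Y),J_Z\big] = (\nabla_{T(X,Y)}J)_Z + J_{(\nabla_Z T)(X,Y)}
$$
by differentiating the defining relation for the horizontal curvature twice along with the structure equations already established. The natural starting point is the observation that $R_\ch$ is the curvature of the connection $\nabla$ restricted to $\hor$, and that $J$ is a bundle map $\mathbf{Cl}(\V)\to\mathbf{End}(\hor)$. Because $\nabla$ preserves both $\hor$ and $\V$, for $X,Y\in\Gamma(\hor)$ and $Z\in\Gamma(\ver)$ one has the general Leibniz-type identity
$$
R_\ch(X,Y)(J_Z W) - J_Z(R_\ch(X,Y)W) = (R(X,Y)\cdot J)_Z W + J_{R(X,Y)Z}\,W,
$$
where $R(X,Y)$ acts as a derivation on the tensor $J$; here $R(X,Y)Z$ lies in $\V$ because $\nabla$ preserves $\V$. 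This reduces the problem to computing $R(X,Y)$ acting on $J$ as a derivation and identifying $R(X,Y)Z$ for horizontal $X,Y$.

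First I would handle the term $J_{R(X,Y)Z}$. By Lemma \ref{prop:TGCurvature}, since $X,Y$ are horizontal and $Z$ is vertical, $R(X,Y)Z = R_\ver(X,Y)Z + (\nabla_Z T)(X,Y) = (\nabla_Z T)(X,Y)$ (the $R_\ver$ term vanishes since $X_\ver = Y_\ver = 0$), which immediately produces the $J_{(\nabla_Z T)(X,Y)}$ summand. Next I would treat the derivation term: the curvature $R(X,Y)$ acting on the tensor $J$ should be re-expressed via a Ricci-type commutation identity, $(R(X,Y)\cdot J)_Z = \nabla^2_{X,Y}J - \nabla^2_{Y,X}J$ evaluated at $Z$, and then one uses the second Bianchi identity for the connection with torsion, $\circlearrowright(\nabla_u R)(v,w) + \circlearrowright R(T(u,v),w) = 0$, together with the fact (Lemma \ref{prop:TGCurvature}) that $T(\hor,\hor)\subset\ver$ and that $R$ vanishes on mixed arguments, to show that the antisymmetrized second covariant derivative of $J$ collapses to $(\nabla_{T(X,Y)}J)_Z$. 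Concretely, the identity $\nabla_X\nabla_Y - \nabla_Y\nabla_X - \nabla_{[X,Y]} = R(X,Y)$ applied to the section $Z\mapsto J_Z$ of $\mathbf{Cl}(\V)^*\otimes\mathbf{End}(\hor)$, combined with $[X,Y] = \nabla_X Y - \nabla_Y X - T(X,Y)$ and the fact that the $\hor$-part of $T(X,Y)$ vanishes, isolates the torsion contribution as precisely $(\nabla_{T(X,Y)}J)_Z$.

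The main obstacle I expect is bookkeeping the interplay between the torsion term appearing in $[X,Y]$ and the covariant derivative of $J$: since $T(X,Y)$ is vertical, the term $\nabla_{T(X,Y)}$ acting on $J$ lands naturally in the expression, but one must be careful that $J$ is only nonzero on $\mathbf{Cl}_2(\V)$-type arguments and that $(\nabla_{T(X,Y)}J)_Z$ is the correctly contracted object (derivative in the base direction $T(X,Y)$, slot $Z$). One should also verify the sign conventions carefully against the definition \eqref{Jmap} of $J$ via the torsion, and against the curvature sign convention in use. A clean way to avoid confusion is to test the identity against an arbitrary horizontal $W$ and pair with another horizontal vector, reducing everything to scalar identities among $\langle R_\ch(X,Y)J_Z W, \cdot\rangle$, $\langle (\nabla_\bullet T)(\cdot,\cdot),\cdot\rangle$ and the already-proven relations $(\nabla_u J)_v = -(\nabla_v J)_u$ (Lemma \ref{skew J}) and the horizontal parallelism $\nabla_\hor T = 0$; the latter guarantees that no extra terms involving $\nabla_X T$ or $\nabla_Y T$ survive. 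Once the scalar identity is checked, tensoriality of both sides lets one conclude pointwise.
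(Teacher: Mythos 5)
Your proposal is correct and follows essentially the same route as the paper: expand $R(X,Y)$ acting as a derivation on the tensor $J$, use the Ricci identity with torsion together with $\nabla_\hor J=0$ (a consequence of $\nabla_\hor T=0$) so that only the $\nabla_{T(X,Y)}J$ term survives, and identify $R(X,Y)Z=(\nabla_Z T)(X,Y)$ via Lemma \ref{prop:TGCurvature}. The appeal to the second Bianchi identity is superfluous -- the collapse of the antisymmetrized Hessian of $J$ follows directly from horizontal parallelism and the torsion term in $[X,Y]$ -- but this does not affect the validity of the argument.
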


\begin{proof}
Write the Hessian operator for $\nabla$ as $\nabla^2_{X,Y} = \nabla_X  \nabla_Y - \nabla_{\nabla_X Y}$. Using that $J$ is parallel in horizontal directions and that $R(X,Y) = \nabla^2_{X,Y}  - \nabla^2_{Y,X} + \nabla_{T(X,Y)}$, we observe that for $X,Y \in \s\Ho$ we have
\begin{equation}
R(X,Y) J = \nabla_{T(X,Y)} J.
\end{equation}
However, for $W \in \s\Ho$ and $Z\in \s\V$, we can also write
\begin{equation}
\begin{aligned}
(R(X, Y) J)_Z W &= R(X, Y) J_Z W - J_{R(X, Y) Z} W- J_Z R(X, Y) W  \\
& = R_\Ho (X, Y) J_Z W - J_{(\nabla_Z T)(X,Y)}W
- J_Z R_\Ho (X, Y) W.
\end{aligned}
\end{equation}
The result follows.
\end{proof}

We will also need the following lemma:

\begin{lemma}\label{lemma:center}
Let $(\M, \Ho,g)$ be a totally geodesic foliation with $\nabla_\Ho T=0$ and $m=3$. Let $Z_1,Z_2,Z_3$ be a local orthonormal frame of $\V$. Then  $(\M, \Ho,g)$ is of quaternionic type if and only if $J_{Z_1}J_{Z_2} J_{Z_3} \in \{ -\Id_\Ho, \Id_\Ho \}$. If $(\M, \Ho,g)$ is not of quaternionic type, then $\sigma=J_{Z_1}J_{Z_2} J_{Z_3}$ is a non-trivial horizontal isometry such that $\sigma^2=\Id_\Ho$ and that commutes with $J_{Z_1},J_{Z_2},J_{Z_3}$.
\end{lemma}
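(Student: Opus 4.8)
The plan is to argue fiberwise at an arbitrary $p\in\M$, using that the $H$-type condition makes $\Ho_p$ a module over $\mathbf{Cl}(\V_p)\cong\mathbf{Cl}(\R^{3})$, with $J\colon\mathbf{Cl}(\V_p)\to\mathbf{End}(\Ho_p)$ a unital algebra homomorphism, $J_1=\mathbf{Id}_\Ho$ and $J_vJ_w=J_{v\cdot w}$. Take $Z_1,Z_2,Z_3$ orthonormal (the case in which the statement is frame-independent) and put $\omega:=Z_1\cdot Z_2\cdot Z_3\in\mathbf{Cl}(\V_p)$, so that $\sigma=J_{Z_1}J_{Z_2}J_{Z_3}=J_\omega$. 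The first step is a short computation in $\mathbf{Cl}(\R^{3})$: under the sign convention $Z\cdot Z=-\|Z\|^{2}$ (i.e.\ $J_Z^{2}=-\|Z\|^{2}\mathbf{Id}_\Ho$) one has $\omega^{2}=+1$ and $\omega$ central --- the usual fact that the volume element is central in odd dimension. Applying $J$ gives at once $\sigma^{2}=J_{\omega^{2}}=J_1=\mathbf{Id}_\Ho$; that $\sigma$ commutes with every $J_Z$, $Z\in\V_p$ (since $\omega Z=Z\omega$); and that $\sigma$ is a horizontal isometry, being a composition of the maps $J_{Z_i}$, each of which is orthogonal (skew-symmetric with $J_{Z_i}^{2}=-\mathbf{Id}_\Ho$). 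Hence $\Ho_p=\Ho^{+}\oplus\Ho^{-}$, an orthogonal splitting into the $\pm1$-eigenspaces of the symmetric involution $\sigma$, with each $\Ho^{\pm}$ a $\mathbf{Cl}(\V_p)$-submodule. This already gives everything in the non-quaternionic part of the lemma except that $\sigma\neq\pm\mathbf{Id}_\Ho$ there, which will come out of the equivalence.

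Next I would prove the equivalence. Assume first $\sigma\in\{\pm\mathbf{Id}_\Ho\}$; after possibly replacing $Z_3$ by $-Z_3$ (which preserves orthonormality and flips $\sigma$), take $\sigma=-\mathbf{Id}_\Ho$, i.e.\ $J_{Z_1}J_{Z_2}J_{Z_3}=-\mathbf{Id}_\Ho$. Since $J_{Z_3}^{-1}=-J_{Z_3}$, right-multiplication yields $J_{Z_1}J_{Z_2}=J_{Z_3}$, and cyclically $J_{Z_2}J_{Z_3}=J_{Z_1}$, $J_{Z_3}J_{Z_1}=J_{Z_2}$ (as $\omega$ is fixed by cyclic permutations of $Z_1,Z_2,Z_3$). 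Together with $J_{Z_i}^{2}=-\mathbf{Id}_\Ho$ these are precisely the quaternion relations, so $[J_{Z_i},J_{Z_j}]\in\{J_Z:Z\in\V_p\}$ and this set is a Lie subalgebra of $\mathbf{End}(\Ho_p)$; moreover $\mathfrak A(p)=\R\,\mathbf{Id}_\Ho+\spn\{J_{Z_1},J_{Z_2},J_{Z_3}\}$ is a subalgebra isomorphic to $\mathbb H$ (being generated by elements obeying the quaternion relations), cf.\ Lemma~\ref{algebra}. Hence $(\M,\hor,g)$ is of quaternionic type.

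Conversely, assume $(\M,\hor,g)$ is of quaternionic type, so $\{J_Z:Z\in\V_p\}$ is closed under commutator brackets. Then the normal form obtained in the proof of Lemma~\ref{algebra} provides an orthonormal basis $z_1,z_2,z_3$ of $\V_p$ with $J_{z_1}J_{z_2}J_{z_3}=-\mathbf{Id}_\Ho$. For an orthonormal basis the Clifford volume element coincides with the exterior product $z_1\wedge z_2\wedge z_3$, hence changes only by the sign of the change-of-basis determinant, so $\omega=Z_1\cdot Z_2\cdot Z_3=\pm\,z_1\cdot z_2\cdot z_3$ and therefore $\sigma=J_\omega=\pm J_{z_1}J_{z_2}J_{z_3}=\mp\mathbf{Id}_\Ho\in\{\pm\mathbf{Id}_\Ho\}$. (If one prefers not to quote that normal form: as $Z_1\perp Z_2$ and $\{J_Z\}$ is bracket-closed, $J_{Z_1}J_{Z_2}=J_w$ for a unit $w\in\V_p$; writing $Z_3=c\,w+s\,u$ with $u\perp w$ a unit vector, one gets $\sigma\in\R\,\mathbf{Id}_\Ho\oplus\R\,J_v$ for some $v\in\V_p$, and the constraint $\sigma^{2}=\mathbf{Id}_\Ho$ from the first step forces $s=0$, i.e.\ $Z_3=\pm w$ and $\sigma=\mp\mathbf{Id}_\Ho$.)

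Putting the pieces together: if $(\M,\hor,g)$ is not of quaternionic type, the equivalence gives $\sigma\notin\{\pm\mathbf{Id}_\Ho\}$, so $\sigma$ is the asserted non-trivial horizontal isometry with $\sigma^{2}=\mathbf{Id}_\Ho$ commuting with $J_{Z_1},J_{Z_2},J_{Z_3}$ (both $\Ho^{\pm}$ nonzero). One remark closes the argument: \emph{quaternionic type} is a condition at every point whereas the frame is local, but under $\nabla_\hor T=0$ Lemma~\ref{isomorphic algebra} makes the algebra type constant on $\M$, and $p\mapsto\sigma_p$ is continuous with $\sigma_p^{2}=\mathbf{Id}_\Ho$, so $\dim\Ho^{\pm}_p$ is locally constant; hence both sides of the equivalence are all-or-nothing on connected $\M$ and agree throughout the frame's domain. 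The main obstacle I anticipate is purely bookkeeping in $\mathbf{Cl}(\R^{3})$ --- pinning down $\omega^{2}=+1$ under the convention $J_Z^{2}=-\|Z\|^{2}\mathbf{Id}_\Ho$, and reading off the quaternion relations with the correct signs --- rather than anything conceptually delicate.
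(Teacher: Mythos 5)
Your proof is correct and follows essentially the same route as the paper's: both argue fiberwise that $\sigma=J_{Z_1}J_{Z_2}J_{Z_3}$ is a central isometric involution of the algebra generated by the $J_{Z_i}$ and then use the quaternion relations (equivalently, that the center of $\mathbb{H}$ is $\mathbb{R}\cdot\mathbf{Id}_\Ho$) to get the equivalence with $\sigma=\pm\mathbf{Id}_\Ho$. You merely spell out the steps the paper leaves as ``one can check'' (the sign bookkeeping for $\omega^2=+1$ and the derivation of the quaternion relations from $\sigma=\pm\mathbf{Id}_\Ho$), which is fine.
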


\begin{proof}
Let $p \in \M$ and $Z_1,Z_2,Z_3$ be a local vertical frame of $\V$ around $p$. Let us denote by $\fA(p) $ the algebra (for the composition of operators) generated by $J_{Z_1}, J_{Z_2},J_{Z_3}$. We note that $J_{Z_1}J_{Z_2} J_{Z_3}$ is an isometry which is in the center of $\fA(p) $.  If $\fA(p) \simeq \Quat $, then the center of $\fA(p) $ is $\R \cdot \Id_\Ho$. Therefore $J_{Z_1}J_{Z_2} J_{Z_3} \in \{ -\Id_\Ho, \Id_\Ho \}$. Conversely, if $J_{Z_1}J_{Z_2} J_{Z_3} \in \{ -\Id_\Ho, \Id_\Ho \}$, then one can check that $\{ J_z, z \in \R \oplus \V_p\}$ is an algebra and thus $\fA(p) \simeq \Quat$.
 If $(\M, \Ho,g)$ is not of quaternionic type, the statement of the lemma is immediately checked.
\end{proof}

\begin{proof}[Proof of Theorem \ref{horizontal einstein}]
Let $Z_1,\dots, Z_m$ be a local vertical orthonormal frame. We will denote $J_i=J_{Z_i}$ and for $i \neq j$, $J_{ij}=J_i J_j$. We first observe that from Lemma \ref{prop:J-symmetry} together with the parallel horizontal Clifford structure assumption, one obtains that for every $X, Y \in \s\Ho$,
\begin{align*}
 \big[ R_\Ho(X,Y) , J_i \big] & =(\nabla_{T(X,Y)} J)_{Z_i} +J_{(\nabla_{Z_i} T)(X,Y)} \\
  &=-\kappa J_{T(X,Y)\cdot Z_i + \langle T(X,Y), Z_i \rangle} +J_{(\nabla_{Z_i} T)(X,Y)}.
\end{align*}
Then, we note that
\begin{align*}
T(X,Y)\cdot Z_i + \langle T(X,Y), Z_i \rangle=-\sum_{j=1, j\neq i}^m \langle J_j X, Y \rangle Z_i \cdot Z_j,
\end{align*}
and that
\begin{align*}
J_{(\nabla_{Z_i} T)(X,Y)}&=\sum_{j=1}^m J_{\langle (\nabla_{Z_i} T)(X,Y),Z_j \rangle Z_j} =\sum_{j=1}^m J_{\langle (\nabla_{Z_i} J)_{Z_j} X, Y \rangle Z_j}  =-\kappa \sum_{j=1, j \neq i}^m \langle J_{ij} X , Y \rangle J_j.
\end{align*}
Therefore, we have
\begin{align}\label{RJ:m1}
 \big[ R_\Ho(X,Y) , J_i \big] =\kappa \sum_{j=1,j\neq i}^m \Big( \langle J_j X, Y \rangle J_{ij}  -\langle J_{ij} X , Y \rangle J_j\Big).
\end{align}

We now fix $i$, and $j \neq i$. Note that $J_i, J_j, J_{ij}$ satisfy the quaternion relations, $J_i^2=J_j^2=J_{ij}^2=J_i J_j J_{ij}=-\Id_\Ho$ and choose a local orthonormal basis \(\{X_\ell\}\) of \(\Ho\)  such that if \(X_\ell\) is in the basis, so are \(J_i X_\ell, J_j X_\ell , J_{ij} X_\ell\) (up to a $\pm$ sign). We then compute for $X, Y \in \s\Ho$,

\begin{align*}
  \ri_\Ho(X,J_iY) &= -\sum_{\ell =1}^n \left \langle R_\Ho (X,X_\ell)J_i Y , X_\ell \right\rangle \\
 &=-\sum_{\ell =1}^n \left \langle [ R_\Ho (X,X_\ell), J_i] Y , X_\ell \right\rangle -\sum_{\ell =1}^n \left \langle J_i R_\Ho (X,X_\ell) Y , X_\ell \right\rangle \\
 &=-\sum_{\ell =1}^n \left \langle [ R_\Ho (X,X_\ell), J_i] Y , X_\ell \right\rangle +\sum_{\ell =1}^n \left \langle  R_\Ho (X,X_\ell) Y , J_iX_\ell \right\rangle.
\end{align*}

On one hand, one obtains from \eqref{RJ:m1}:
\begin{align*}
\sum_{\ell =1}^n \left \langle [ R_\Ho (X,X_\ell), J_i] Y , X_\ell \right\rangle &=\kappa \sum_{\ell =1}^n \sum_{j=1,j\neq i}^m \Big( \langle J_j X, X_\ell \rangle \langle J_{ij} Y,X_\ell \rangle -\langle J_{ij} X , X_\ell \rangle \langle J_j Y, X_\ell \rangle \Big) \\
 & =\kappa  \sum_{j=1,j\neq i}^m \Big(\langle J_j X , J_{ij} Y \rangle -\langle J_{ij} X ,  J_j Y \rangle \Big) \\
 &=2 \kappa (m-1) \langle J_i X, Y \rangle.
\end{align*}
On the other hand, noticing that the set of $-J_iX_\ell \otimes X_\ell  $ and the set of  $ X_\ell \otimes J_iX_\ell $ will be identical  as \(X_\ell\) varies across the whole basis, one obtains
\begin{align*}
\sum_{\ell =1}^n \left \langle  R_\Ho (X,X_\ell) Y , J_iX_\ell \right\rangle&=\frac{1}{2} \sum_{\ell=1}^n  \Big(\left\langle  R_\Ho (X,X_\ell) Y , J_iX_\ell \right\rangle -  \left \langle  R_\Ho (X,J_iX_\ell) Y , X_\ell \right\rangle\Big) \\
&= \frac{1}{2} \sum_{\ell=1}^n \left \langle  R_\Ho (X,Y) X_\ell , J_iX_\ell \right\rangle,
\end{align*}
where the second equality follows from Bianchi's identity and symmetries of the curvature tensor. It therefore remains to compute $ \sum_{\ell=1}^n \left \langle  R_\Ho (X,Y) X_\ell , J_iX_\ell \right\rangle$. We use  the fact that the set of $ X_\ell \otimes J_iX_\ell $ and the set of $ J_j X_\ell \otimes J_{ij}X_\ell  $ will be identical  as \(X_\ell\) varies across the whole basis to obtain
\begin{align*}
2\sum_{\ell=1}^n \left \langle  R_\Ho (X,Y) X_\ell , J_iX_\ell \right\rangle &=\sum_{\ell=1}^n \left \langle  R_\Ho (X,Y) X_\ell , J_iX_\ell \right\rangle +\left \langle  R_\Ho (X,Y) J_j X_\ell , J_{ij} X_\ell \right\rangle \\
 & =\sum_{\ell=1}^n \left \langle  R_\Ho (X,Y) X_\ell , J_j J_{ij} X_\ell \right\rangle +\left \langle  R_\Ho (X,Y) J_j X_\ell , J_{ij} X_\ell \right\rangle \\
 &=\sum_{\ell=1}^n -\left \langle  J_j R_\Ho (X,Y) X_\ell ,  J_{ij} X_\ell \right\rangle +\left \langle  R_\Ho (X,Y) J_j X_\ell , J_{ij} X_\ell \right\rangle  \\
  &= \sum_{\ell=1}^n\left \langle [ R_\Ho (X,Y), J_j ]X_\ell , J_{ij} X_\ell \right\rangle.
  \end{align*}
    Now, from \eqref{RJ:m1}:
    \begin{align*}
   \sum_{\ell=1}^n\left \langle [ R_\Ho (X,Y), J_j ]X_\ell , J_{ij} X_\ell \right\rangle &=\kappa \sum_{\ell=1}^n \sum_{k=1,k \neq j}^m  \Big(\langle J_k X, Y \rangle \left\langle J_{jk}X_\ell , J_{ij} X_\ell \right\rangle  -\langle J_{jk} X , Y \rangle \left \langle J_k X_\ell , J_{ij} X_\ell \right\rangle\Big).
\end{align*}

If $k \neq i$, one has $\left\langle J_{jk}X_\ell , J_{ij} X_\ell \right\rangle=0$ and  if $k = i$, $\left\langle J_{jk}X_\ell , J_{ij} X_\ell \right\rangle=-1$. Therefore, one obtains:

\begin{align*}
 \sum_{\ell=1}^n\left \langle [ R_\Ho (X,Y), J_j ]X_\ell , J_{ij} X_\ell \right\rangle=-\kappa n  \langle J_i X, Y \rangle -\kappa \sum_{k=1,k \neq j}^m  \sum_{\ell=1}^n \langle J_{jk} X , Y \rangle \left \langle J_k X_\ell , J_{ij} X_\ell \right\rangle.
\end{align*}

The analysis of the sum $\sum_{\ell=1}^n  \left \langle J_k X_\ell , J_{ij} X_\ell \right\rangle$ will depend on $m$. If $m=2$, then one has $\sum_{\ell=1}^n  \left \langle J_k X_\ell , J_{ij} X_\ell \right\rangle=0$, because one must have $k=i$. If $m \ge 4$, then one can pick an index $s$ which is different from $i$, $j$ and $k$ so that by using invariance of the trace by a change a basis:
\begin{align*}
\sum_{\ell=1}^n  \left \langle J_k X_\ell , J_{ij} X_\ell \right\rangle &= \sum_{\ell=1}^n  \left \langle J_k J_s X_\ell , J_{ij} J_s X_\ell \right\rangle=-\sum_{\ell=1}^n  \left \langle J_k X_\ell , J_{ij} X_\ell \right\rangle.
\end{align*}
Therefore $\sum_{\ell=1}^n  \left \langle J_k X_\ell , J_{ij} X_\ell \right\rangle=0$. Summarizing the above computations, one deduces that for $i \neq j \neq k$,
\[
 \ri_\Ho(X,J_iY)=
 \begin{cases}
 -\frac{\kappa}{4} ( 8(m-1) + n )  \langle J_i X, Y \rangle, \quad {if  }\quad  m \neq 3 \\
 -\frac{\kappa}{4}  \Big( ( 16 + n )  \langle J_i X, Y \rangle + \tr_\Ho (J_{i}J_j J_k) \langle J_{jk} X , Y \rangle \Big), \quad {if  } \quad m = 3.
 \end{cases}
 \]
Therefore, substituting $Y$ by $J_iY$ one concludes
\[
 \ri_\Ho(X,Y)=
 \begin{cases}
 \frac{\kappa}{4} ( 8(m-1) + n ) \langle  X, Y \rangle, \quad {if  }\quad  m \neq 3 \\
 \frac{\kappa}{4} \Big( (16 + n ) \langle  X, Y \rangle + \tr_\Ho (J_{1}J_2 J_3) \langle J_1 J_{2} J_3 X ,  Y \rangle \Big), \quad {if  } \quad m = 3.
 \end{cases}
 \]
 By denoting $\sigma=J_1J_2J_3$,  $\Ho^+$ the 1 eigenspace of $\sigma$ and $\Ho^-$ the $-1$ eigenspace of $\sigma$, one then concludes with Lemma \ref{lemma:center}. We note that $\sigma^2=\Id_\Ho$, thus $\nabla_\Ho \sigma=0$.
\end{proof}

\subsection{Sub-Riemannian diameter and first eigenvalue estimates}

Combining Theorem \ref{horizontal einstein} with the results of Section \ref{section CD}, one obtains the following result.

\begin{corollary} \label{compact finite fundamental corollary}
Let \((\M,\Ho,g)\) be a complete H-type foliation with a parallel horizontal Clifford structure: 
$
\Psi(Z,W) = -\kappa (Z \cdot W + \langle Z,W \rangle), 
$
with \(\kappa > 0\).  Then,  \(\M\) is compact with finite fundamental group.  Moreover, 

\begin{itemize}
\item If $m\neq 3$ then its sub-Riemannian diameter is bounded above by 
$
4 \sqrt{3} \frac{\pi}{\sqrt \kappa} \sqrt{\frac{(n + 4m)(n+6m)}{n(n+8(m-1))}} ,
$
and we have the following estimate for the first eigenvalue of the sub-Laplacian
$
\lambda_1 \ge \frac{\kappa }{4} \frac{n(n+8(m-1))}{n+3m-1}. 
$
\item If $m=3$ and \((\M,\Ho,g)\) is of quaternionic type, then its sub-Riemannian diameter is bounded above by 
$
2 \sqrt{6} \frac{\pi}{\sqrt \kappa} \sqrt{\frac{(n + 12)(n+18)}{n(n+8)}} ,
$
and we have the following  estimate for the first eigenvalue of the sub-Laplacian
$
\lambda_1 \ge \frac{n\kappa }{2}. 
$
\end{itemize}
\end{corollary}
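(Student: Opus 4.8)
The plan is to recognize that this corollary is a direct consequence of the horizontal Einstein property established in Theorem~\ref{horizontal einstein} combined with the geometric consequences of the generalized curvature dimension inequality collected in Corollary~\ref{yang-mills corollary}; no genuinely new computation is required beyond substituting the Einstein constant.

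First I would invoke Theorem~\ref{horizontal einstein}. Since $(\M,\Ho,g)$ is an H-type foliation with parallel horizontal Clifford structure with $\Psi(Z,W)=-\kappa(Z\cdot W+\langle Z,W\rangle)$ and $m\ge 2$, that theorem gives $\ri_\Ho = K g_\Ho$, in particular $\ri_\Ho \ge K g_\Ho$, where
\[
K = \kappa\Bigl(\tfrac{n}{4}+2(m-1)\Bigr) = \tfrac{\kappa}{4}\bigl(n+8(m-1)\bigr)
\]
if $m\neq 3$, and
\[
K = \kappa\Bigl(\tfrac{n}{2}+4\Bigr) = \tfrac{\kappa}{2}(n+8)
\]
if $m=3$ and the structure is of quaternionic type. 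As $\kappa>0$ and $n\ge 1$, $m\ge 2$, in both cases $K>0$.

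Next I would feed this lower bound into Corollary~\ref{yang-mills corollary}, which applies because $g$ is complete and $(\M,\Ho,g)$ is an H-type foliation. Point~2 of that corollary immediately yields that $\M$ is compact with finite fundamental group and that $\mathbf{diam}(\M,d)\le 2\sqrt{3}\,\pi\sqrt{(n+4m)(n+6m)/(nK)}$, while point~3 yields $\lambda_1 \ge nK/(n+3m-1)$. It then remains only to substitute the two values of $K$ above. For $m\neq 3$ this turns the diameter bound into $4\sqrt{3}\,\tfrac{\pi}{\sqrt\kappa}\sqrt{(n+4m)(n+6m)/\bigl(n(n+8(m-1))\bigr)}$ and the eigenvalue bound into $\lambda_1\ge\tfrac{\kappa}{4}\,n(n+8(m-1))/(n+3m-1)$. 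For $m=3$ of quaternionic type, using $4m=12$, $6m=18$ and $n+3m-1=n+8$, the same substitution gives $\mathbf{diam}(\M,d)\le 2\sqrt{6}\,\tfrac{\pi}{\sqrt\kappa}\sqrt{(n+12)(n+18)/\bigl(n(n+8)\bigr)}$ and $\lambda_1\ge \tfrac{n\kappa}{2}$, as claimed.

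The argument has essentially no obstacle: it is the concatenation of two results already proved in the paper, and the only thing to verify is that the quantities appearing under square roots or in denominators---$n+8(m-1)$, $n+3m-1$, $(n+4m)(n+6m)$, $n+8$---are all strictly positive, which is automatic from $\kappa>0$ and $m\ge 2$. If one wanted a fully self-contained exposition, the only mild extra care is to record the elementary arithmetic simplifications $\tfrac{n}{4}+2(m-1)=\tfrac14(n+8(m-1))$ and $2\sqrt{3}/\sqrt{K}=4\sqrt{3}/\sqrt{\kappa(n+8(m-1))}$ for $K=\tfrac\kappa4(n+8(m-1))$, and the analogous identities in the $m=3$ case.
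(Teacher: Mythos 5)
Your proposal is correct and is exactly the paper's argument: the paper proves this corollary by combining Theorem~\ref{horizontal einstein} with Corollary~\ref{yang-mills corollary} and substituting the Einstein constant $K=\tfrac{\kappa}{4}(n+8(m-1))$ (resp.\ $K=\tfrac{\kappa}{2}(n+8)$), and your arithmetic checks out in both cases.
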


As already pointed out in Section \ref{section CD}, the diameter bounds should not expected to be sharp. However, from \cite{BK16} the eigenvalue estimates might expected to be. Indeed, consider the quaternionic Hopf fibration 
\begin{equation}
\SU(2)\hookrightarrow \Sph^{4n+3} \to \Quat P^n,
\end{equation}
on the unit sphere $(\Sph^{4n+3},g_s)$ where $g_s$ is the standard metric. Then, one has $\V_p \subset \calC_p \left(1, g_s  \right)$, $\forall p \in \Sph^{4n+3}$. Therefore, from Remark \ref{scaling inverse}, $\kappa=2$ and the above estimate  yields $\lambda_1=4n$. This is sharp, because one actually has $\lambda_1=4n$ (see \cite{BW2,Prandi15}).

\bibliographystyle{plain}
\bibliography{biblio}

\Addresses

\end{document}